\numberwithin{equation}{section}
\newcommand{\be}{\begin{equation} }
\newcommand{\ee}{\end{equation}}
\newcommand{\bse}{\begin{subequations}}
\newcommand{\ese}{\end{subequations}}
\newcommand{\bc}{\mathrm{bc}}
\newcommand{\jbracket}[1]{\langle{#1}\rangle}
\newcommand{\Uybc}{(\partial_{x_2} U)_{\mathrm{bc}}}
\newcommand{\Gammas}{\Gamma_{\mathrm{s}}}
\newcommand{\Gammast}{\tilde{\Gamma}_{\mathrm{s}}}
\newcommand{\Gammasc}{\check{\Gamma}_{\mathrm{s}}}
\newcommand{\vecbundlek}{\mathscr{X}_{\delta, k}}
\newcommand{\vecbundlevGk}{\mathscr{W}_{\delta, k}}
\newcommand{\p}{\partial}
\newcommand{\placeholder}{\,\cdot\,}
\newcommand{\C}{\mathbb C}
\newcommand{\I}{\mathrm i}
\newcommand{\R}{\mathbb R}
\newcommand{\diff}{{\mathrm d}}
\newcommand{\Diff}{{\mathrm D}}
\newcommand{\e}{{\mathrm e}}
\newcommand{\slab}{\mathcal{S}}
\DeclareMathOperator{\dist}{dist}
\DeclareMathOperator{\im}{Im}
\DeclareMathOperator{\F}{{\mathcal F}}
\DeclareMathOperator{\id}{id}
\DeclareMathOperator{\spn}{span}
\DeclareMathOperator{\sign}{sgn}
\DeclareMathOperator{\graph}{graph}
\theoremstyle{plain} 
\newtheorem{theorem}{Theorem}[section] 
\newtheorem{corollary}[theorem]{Corollary}
\newtheorem{lemma}[theorem]{Lemma}
\newtheorem{proposition}[theorem]{Proposition}
\theoremstyle{remark}
\newtheorem{remark}[theorem]{Remark}
\title[Water waves with localized vorticity]{Smooth stationary water waves with exponentially localized vorticity}
\author[M. Ehrnstr\"om]{Mats Ehrnstr\"om}
\address[M. Ehrnstr\"om]{Department of Mathematical Sciences, Norwegian University of Science and Technology, 7491 Trondheim, Norway}
\email{mats.ehrnstrom@ntnu.no}
\thanks{ME was supported by grant nos. 231668 and 250070 from the Research Council of Norway.}
\author[S. Walsh]{Samuel Walsh}
\address[S. Walsh]{Department of Mathematics, University of Missouri, Columbia, MO 65211} 
\email{walshsa@missouri.edu} 
\thanks{SW was supported by the National Science Foundation through the awards DMS-1514910 and DMS-1812436.} 
\author[C. Zeng]{Chongchun Zeng}
\address[C. Zeng]{School of Mathematics, Georgia Institute of Technology, Atlanta, GA 30332}
\email{zengch@math.gatech.edu}
\thanks{CZ was supported in part by the National Science Foundation through the award DMS-1900083.} 
\begin{document}

\begin{abstract}
We study stationary capillary-gravity waves in a two-dimensional body of water that rests above a flat ocean bed and below vacuum.  
This system is described by the Euler equations with a free surface.  A great deal of recent activity has focused on finding waves with nontrivial vorticity $\omega$. There are now many results on the existence of solutions to this problem for which the vorticity is non-vanishing at infinity, and several authors have constructed waves with $\omega$ having compact support.       Our main theorem states that there are large families of stationary capillary-gravity waves that carry finite energy and exhibit an exponentially localized distribution of vorticity.  They are solitary waves in the sense that the free surface is asymptotically flat.  Remarkably, while their amplitude is small, the kinetic energy is $O(1)$.  In this and other respects, they are strikingly different from previously known rotational water waves.  

To construct these solutions, we exploit a previously unobserved connection between the steady water wave problem on the one hand and singularly perturbed elliptic PDE on the other.  Indeed, our result expands the study of spike-layer solutions to free boundary problems with physical relevance.  
\end{abstract}
\maketitle

\section{Introduction} \label{sec:formulation}

We consider waves in a two-dimensional body of water that has finite depth.  Mathematically, they are modeled as solutions to the incompressible Euler equation
\bse \label{SE:Euler}
\be \label{E:Euler}
\p_t v + (v \cdot \nabla) v + \nabla p + g e_2 =0,  
\ee  
on the evolving fluid domain 
\be \label{E:fluidD}
\Omega (t) = \left \{ (x_1, x_2) \in \mathbb{R}^2 \colon -1 < x_2 < 1 +  \eta(t, x_1) \right\}.
\ee
Here, the water density is assumed to be of constant value $1$, $v = v(t,\placeholder) \colon \Omega(t) \to \mathbb{R}^2$ is the velocity, $p = p(t, \placeholder) \colon \Omega(t) \to \mathbb{R}$ is the pressure, $g > 0$ is the constant gravitational acceleration, and $e_2=(0,1)$.  Notice that the water is bounded below by a rigid and perfectly flat bed at $\{x_2 = -1\}$.  The upper boundary, given by the graph of $1+\eta$, represents the interface between the water and a region of air which is treated as vacuum.  An important feature of this problem is that $\eta$ is one of the unknowns in the system.  For solitary waves, $\eta$ vanishes as $|x_1| \to \infty$, and hence the asymptotic depth is normalized to be $2$.

The kinematic boundary conditions state that the velocity field does not penetrate the bed:
\be \label{E:BC1}
v_2 =0 \qquad \textrm{on }  x_2 = -1, 
\ee 
and, along the free surface, we have
\be \label{E:kinematic}
\p_t \eta 
= - v_1 \p_{x_1} \eta  + v_2 \qquad \textrm{on }  x_2 = 1 + \eta(t, x_1).
\ee
Lastly, on the surface we impose the dynamic condition according to the Young--Laplace law that 
\be \label{E:pressure}
p = \alpha^2 \kappa \qquad \textrm{on }  x_2 = 1 + \eta(t,x_1),
\ee
\ese
where $\alpha>0$ is a constant measuring the surface tension and 
\be \label{E:curvature}
\kappa = - \frac {\p_{x_1}^2 \eta }{\left(1+(\p_{x_1} \eta)^2 \right)^{\frac 32}}
\ee
is the signed curvature.  
Because $g, \alpha > 0$, we always presume that surface tension is present on the interface and that gravity acts in the bulk.  Solutions of \eqref{SE:Euler} are therefore called \emph{capillary-gravity waves}. 

A stationary water wave is a solution to \eqref{SE:Euler} that is independent of time.  More generally, one can consider steady or traveling waves, which are solutions that become time independent after shifting to a moving frame of reference.  These are among the oldest and most important examples of nonlinear wave phenomena studied in mathematics.  

Perhaps the central object of interest for this paper is the \emph{vorticity}
\be \label{definition omega}
\omega :=\nabla^\perp \cdot v= \partial_{x_1} v_2 - \partial_{x_2} v_1.
\ee
which is the third component of $\nabla \times (v_1, v_2, 0)$. 
The earliest rigorous constructions of steady water waves were given by Levi-Civita \cite{levicivita1925determination} and Nekrasov \cite{nekrasov1951exact}, who worked in the irrotational regime where $\omega$  vanishes identically.  This assumption permits several elegant reformulations of the problem that are far more tractable; see, for example, the survey \cite{toland1996stokes}.  However, beginning in the early 2000s, substantial inroads have been made in the rigorous analysis of rotational water waves. With a few exceptions, these results pertain to waves without interior stagnation, meaning that the streamlines (the integral curves of $v$) are never closed, and hence the vorticity does not vanish at infinity. 

In practice, though, many of the effects that generate vorticity are \emph{local} --- wind blowing over a section of the water or a boundary layer caused by an immersed body, for example.   This naturally leads us to seek waves for which $\omega$ is concentrated in the near field.  A completely different analytical approach is necessary to treat this situation, however.   Consequently, there are comparatively very few rigorous results for waves with localized vorticity, and those that are available concern either periodic waves or waves with compactly supported vorticity; see the overview below.  

Another important quantity associated to the system is the total energy $E$ defined by
\be \label{E:energy}
E = \frac 12 \int_\Omega  |v|^2 \, \diff x + \int_\R \frac 12 g \eta^2 + \alpha^2 \left(\sqrt{1+(\p_{x_1} \eta)^2} -1\right)  \, \diff x_1.
\ee 
The first term on the right-hand side represents the kinetic energy, while the second is gravitational potential energy, and the third is the surface energy.   It is well-known that $E$ is conserved by sufficiently smooth solutions of the time-dependent problem.  It is physically desirable, therefore, to construct waves that carry a finite amount of total energy, which in particular means that $v$ must be in $L^2(\Omega)$.

As the main contribution of this paper, we prove the existence of large families of solitary stationary water waves with a \emph{smooth}, \emph{highly localized vorticity} and a \emph{finite energy} $E<\infty$: in a perturbed disk around the origin the vorticity is large and negative, and outside it is positive and exponentially decaying.  Qualitatively, this represent an entirely novel species of water wave that we call a \emph{vortex spike}.   Our method establishes a connection between singularly perturbed elliptic equations and physical problems with free boundaries.  This application to water waves is at once quite natural and yet completely new.

\subsection{Main theorem}
We now state the result more precisely.  In two dimensions, divergence free vector fields can be represented through a stream function, namely, 
\[
v = \nabla^\perp \Psi :=  (-\p_{x_2}\Psi, \p_{x_1} \Psi).
\]
One can easily confirm that from \eqref{definition omega} that $\omega = \Delta \Psi$.

As mentioned above, our interest is in \emph{smooth finite energy stationary} waves with \emph{spatially highly localized vorticity}. 
For the momentum equation \eqref{E:Euler}, we see $\omega$ satisfies
\be \label{vorticity equation}
 \p_t \omega + v \cdot \nabla \omega =0 \qquad \textrm{in } \Omega(t),
 \ee
and hence the vorticity is transported by the Lagrangian flow.  In terms of the stream function, for the stationary case this becomes
\begin{equation}\label{eq:momentum}  
 \nabla^\perp \Psi \cdot \nabla \Delta \Psi  = v \cdot \nabla \omega = 0 \qquad \textrm{in } \Omega.
\end{equation} 
The kinematic boundary conditions \eqref{E:BC1}--\eqref{E:kinematic} imply that $\Psi$ is a constant along each component of $\p \Omega$.  Without loss of generality, we take \be \label{E:SKinematic}
\Psi|_{\p \Omega}=0;
\ee
see Section \ref{sec:intro:motivation} for more discussion about this.  At the same time, the dynamic condition \eqref{E:pressure} can be expressed in terms of $\Psi$ as the well-known Bernoulli equation
\begin{equation}\label{eq:dynamic}
{\textstyle \frac{1}{2}} | \nabla \Psi|^2 + g x_2 + \alpha^2 \kappa = g \qquad \text{on } x_2 = 1 + \eta(x_1).
\end{equation}

Together, \eqref{eq:momentum}--\eqref{eq:dynamic} are equivalent to the (stationary) Euler equations \eqref{SE:Euler}.  We seek to construct waves for which the stream function and the vorticity will have the leading order forms
\be \label{eq:def psi}
\Psi(x) = U\left(\frac {x-x_*}\delta\right) + \ldots \in H^k (\Omega) \cap H_0^1 (\Omega), \quad \omega (x) = \frac 1{\delta^2} \Delta U\left(\frac {x-x_*}\delta\right) + \cdots, 
\ee
where $0< \delta \ll1$, \(x_*\) is roughly the location of the vorticity to be determined in the proof which will turn out to be very close to the origin in out coordinate system, and $U$ is a smooth solution to \eqref{eq:momentum} on the whole of $\R^2$, exponentially decaying as $|x| \to \infty$. It is well known that \eqref{eq:momentum} is satisfied provided that $\omega = \gamma(\Psi)$, for some \emph{vorticity function} $\gamma$.  We therefore construct $\Psi$ as the solution to 
\be \label{eq:main} 
\Delta \Psi = \frac 1{\delta^2} \gamma(\Psi) \qquad\text{ in }  \Omega, 
\ee
with $U$ a solution to
\begin{equation}\label{eq:ground state}
\Delta U = \gamma(U) \qquad\text{ in }  \R^2.
\end{equation}
We will assume that  $\gamma$ satisfies the following. \\[-8pt] 
\begin{enumerate}[label=\rm(\Alph*)]
\item \label{assumption:A}  
\(\gamma \in C^{k_0}(\R, \R)\), $k_0\ge 2$, $\gamma(0)=0$, $\gamma'(0)=1$, and 
\eqref{eq:ground state} has a nontrivial radial solution \(U \in C^{k_0+2}(\R^2)\) satisfying $U(x), \nabla U(x) \to 0$ as $|x| \to \infty$, and \\[-8pt] 
\item \label{assumption:B}  
the kernel of $-\Delta + \gamma^\prime(U)\colon H^2(\mathbb{R}^2) \to L^2(\mathbb{R}^2)$ is 
equal to $\spn\{\partial_{x_1} U, \partial_{x_2} U\}$. \\[-8pt]
\end{enumerate}
We would like to point out that the asymptotic vanishing of $U$ and $\nabla U$ at $|x|= \infty$ can be ensured by further asking that $U \in L^2 (\R^2) \cap L^\infty (\R^2)$ or $U \in H^1 (\R^2)$, see Remark \ref{R:UDecay}.  Also, as a consequence of \ref{assumption:A}, $U$ and its derivatives up to order $k_0+1$ decay exponentially, and so in particular, $U \in H^{k_0+2} (\R^2)$, see Proposition \ref{prop:sign of Uy}. Prototypical functions $\gamma$ fulfilling assumptions~\ref{assumption:A} and~\ref{assumption:B} are \(\gamma(t) = t - |t|^p t\), for integers \(p \geq 1\), 
but many others will do as well. Classical results for dimension \(n=2\) may be found in for example \cite{MR734575, MR695535, MR969899}, and a modern summary including the non-degeneracy results in \cite{MR3626577}. 

Under the above assumptions, our main theorem is as follows.

\begin{theorem} \label{main euler theorem}  
For any $\gamma$ as in Assumptions~\ref{assumption:A} and~\ref{assumption:B},  there exists $\delta_0 > 0$ such that, for each $\delta \in (0,\delta_0)$, there is a finite energy solution 
\[ 
 (\Psi, \eta) \in \left( H_0^1(\Omega) \cap H^{k_0} (\Omega) \right)  \times H^{k_0}(\mathbb{R})
\]
to the stationary water wave problem \eqref{E:SKinematic}, \eqref{eq:dynamic}, and \eqref{eq:main}.  Both $\Psi$ and $\eta$ are even in \(x_1\). Moreover, there exists a constant $C>0$, independent of $\delta$ but depending on $\gamma$, such that for each $\delta \in (0, \delta_0)$ there exists $\tau$ with $|\tau| \le C\delta^{-\frac 72} e^{-\frac 2{\delta}}$ satisfying 
\be 
|\Psi -  \Psi_0|_{H^{k_0} (\Omega)} \le C \delta^{1 -2k_0} e^{-\frac 2\delta},  
\label{Psi asymptotics} \ee
where 
\[ 
\Psi_0 (x)  = U\left(\frac {x_1}\delta, \frac {x_2 -\tau}\delta \right) - U\left(\frac {x_1}\delta, \frac {2- x_2 -\tau}\delta \right) - U\left(\frac {x_1}\delta, \frac {-2- x_2 -\tau}\delta \right), 
\]
and
\be 
|\eta|_{H^{k_0} (\R)} \le C \delta^{1 - k_0} e^{-\frac 2\delta}, 
\quad |\eta - \eta_0|_{H^{k_0} (\R)} \le C \delta^{\frac 34 -2k_0}  e^{-\frac {3}{\delta}},
\label{eta asymptotics} \ee
with 
\[
\eta_0 = - 2\delta^{-2} (g-\alpha^2 \p_{x_1}^2)^{-1} \left( \left( \p_{x_2} U(\tfrac \placeholder \delta, \tfrac 1\delta)\right)^2 \right) =- \frac 1{\alpha \sqrt{g} \delta^{2}} e^{-\frac {\sqrt{g}}\alpha |\placeholder|} *   \left( \left( \p_{x_2} U(\tfrac \placeholder\delta, \tfrac 1\delta)\right)^2 \right). 
\]
\end{theorem}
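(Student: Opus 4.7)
\medskip

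\noindent\textbf{Proof strategy.} The plan is to convert the free-boundary system into a semilinear elliptic equation on a fixed strip, construct a spike-layer ansatz built from the ground state $U$, and close the argument by a Lyapunov--Schmidt reduction adapted to the two-dimensional kernel identified in Assumption~\ref{assumption:B}. First I would fix the geometry by a smooth change of variables that flattens $\{x_2=1+\eta\}$ into $\{y_2=1\}$, turning \eqref{eq:main} into a quasilinear equation on $\slab := \R \times (-1,1)$ with $\eta$-dependent coefficients. Since $g-\alpha^2 \p_{x_1}^2$ is an isomorphism on $H^{k_0}(\R)$, the Bernoulli relation \eqref{eq:dynamic} can be solved for $\eta$ as a smooth nonlocal functional of $\nabla \Psi|_{y_2=1}$; inserting this back reduces the full system to a single nonlinear elliptic problem for $\Psi$ on $\slab$ with homogeneous Dirichlet data on $\p\slab$.

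As approximate solution I would take $\Psi_0$ as in the theorem statement, with $x_* = (0,\tau)$ and $\tau$ a small vertical shift to be chosen. Thanks to the radial symmetry of $U$, the reflections across $\{x_2=1\}$ and $\{x_2=-1\}$ force $\Psi_0$ to vanish on $\p\slab$ \emph{up to} the contribution of the ``opposite'' copy, which, since the distance from $x_*$ to either wall is $\approx 1$ and $U$ decays exponentially at unit rate, has size $O(e^{-2/\delta})$ in every Sobolev norm, with polynomial prefactors in $\delta^{-1}$. A parallel estimate holds for the crossterms generated when $\Psi_0$ is substituted into $\Delta \Psi - \delta^{-2}\gamma(\Psi)$, since $\gamma(U_1 - U_2 - U_3) - \gamma(U_1) + \gamma(U_2) + \gamma(U_3)$ vanishes to leading order in each region where one of the $U_i$ dominates.

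Writing $\Psi = \Psi_0 + \phi$ and passing to the stretched variable $y = (x-x_*)/\delta$, the linearization around $U$ is $\Lop := -\Delta + \gamma'(U)$, whose kernel on $H^2(\R^2)$ is $\spn\{\p_{y_1} U,\p_{y_2} U\}$. Restricting to functions even in $x_1$ kills the $\p_{y_1} U$ mode automatically, so $\Lop$ is a bounded isomorphism on the $L^2$-orthogonal complement of $\p_{y_2} U$ within the even class. A standard contraction mapping argument in a weighted Sobolev space, with the nonlinear and $\eta$-dependent remainders treated perturbatively, then produces a unique even $\phi = \phi(\tau)$ satisfying the bound in \eqref{Psi asymptotics}, for each $\tau$ in a suitable range. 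The solvability condition in the remaining direction reduces to a scalar bifurcation equation of the form
\[
F(\tau,\delta) := \bigl\langle \text{residual}(\Psi_0, \eta(\Psi_0)),\; \p_{y_2} U \bigr\rangle_{L^2} = 0,
\]
whose leading behavior comes from the two reflected copies of $U$: these depend monotonically and exponentially on $\tau$ with opposite signs, giving $\p_\tau F$ on the order of $\delta^{-c} e^{-2/\delta}$, so a quantitative implicit function theorem yields a unique $\tau$ with $|\tau|\lesssim \delta^{-7/2} e^{-2/\delta}$.

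Finally, $\eta$ is recovered by inverting the Bernoulli relation. The dominant contribution to $\tfrac12|\nabla \Psi|^2|_{x_2=1}$ is $\delta^{-2}(\p_{y_2} U(\cdot/\delta, 1/\delta))^2$ (the three copies of $U$ produce boundary normal derivatives of identical sign at leading order, giving the overall factor of $2$ in $\eta_0$), and convolution against the Green's function $(g-\alpha^2\p_{x_1}^2)^{-1}(x) = (2\alpha\sqrt g)^{-1} e^{-\sqrt g |x|/\alpha}$ yields the explicit formula for $\eta_0$, while the quadratic and flattening corrections produce the residual estimate in \eqref{eta asymptotics}. The hardest part, I expect, will be controlling the coupling between the Lyapunov--Schmidt reduction and the free surface: the flattening map and the Bernoulli inversion both introduce nonlinear, nonlocal, $\eta$-dependent corrections into the linearized operator, and verifying that the scalar equation $F(\tau,\delta)=0$ remains non-degenerate uniformly on the exponentially small scale $e^{-2/\delta}$ requires careful bookkeeping of the $\delta$-weights, in particular to justify the sharp $\delta^{-7/2}$ prefactor on $|\tau|$.
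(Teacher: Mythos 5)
Your high-level strategy is broadly parallel to the paper's: model the stream function on a rescaled ground state $U$, correct for the boundary, linearize around $U$, identify the near-degenerate direction coming from the translation symmetry, and perform a Lyapunov--Schmidt reduction with the vertical offset $\tau$ as bifurcation parameter. There are, however, two genuine deviations worth flagging, one structural and one that is a real gap.

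\textbf{Structural differences.} You propose a smooth flattening change of variables and elimination of $\eta$ via inverting the Bernoulli relation before setting up the elliptic problem. The paper instead uses a \emph{conformal} mapping $\Gamma$, determined by its boundary trace $\Gammas$, and keeps $(\varphi, \Gammas)$ as a coupled system. The conformal choice is not cosmetic: because the principal operator in \eqref{eq:main} is the Laplacian, a conformal change of variables only produces a scalar Jacobian factor $|1+\Gamma'|^2$ multiplying $\gamma(\varphi)$, leaving the Laplacian intact, whereas a generic flattening map introduces variable-coefficient second-order cross terms that then have to be absorbed into the perturbation, complicating the uniform-in-$\delta$ invertibility estimates of Section~\ref{sec:Spec}. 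Your choice is not fatal, but it is materially harder to run. Similarly, the paper works with a boundary correction $U_{\bc}$ defined by $(1-\Delta)U_{\bc}=0$ with Dirichlet data inherited from $U$, and then shows (Corollary~\ref{C:Ubc}) that the two reflected copies of $U$ approximate $U_{\bc}$; you take the reflections directly as the ansatz, which would require verifying after the fact that the mismatch is subdominant. You also project onto the complement of $\partial_{x_2}U$, but on $\slab_\delta$ the relevant near-degenerate direction is a slab-dependent eigenfunction $U_0$ (Proposition~\ref{prop:U_0}), not $\partial_{x_2}U$ itself; the paper builds this $U_0$ carefully via a fixed-point argument before the reduction can be performed with uniform bounds.

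\textbf{The gap.} The real problem is the final step. You solve the reduced scalar equation by a ``quantitative implicit function theorem,'' appealing to the monotone exponential dependence of the residual on $\tau$. The paper explicitly states (end of Section~\ref{sec:intro:motivation}) that although the linearized bifurcation equation is invertible in principle (Proposition~\ref{prop:U_0} gives a positive eigenvalue $l \eqsim \delta^{1/2}e^{-2(1-|\tau|)/\delta}$), this non-degeneracy is \emph{too weak} for an obvious implicit-function-theorem argument, because the error terms generated by the Lyapunov--Schmidt reduction and by the $\eta$-coupling are themselves exponentially small with comparable exponents. The argument that replaces it --- and which you do not have --- is the identity \eqref{motivational identity}: multiplying the equation by $\partial_{x_2}\Psi$ and integrating shows that the bifurcation function $b(\tau)$ is, up to controllable errors, the boundary flux $-\tfrac12\int_{\partial\tilde\Omega}|\nabla\psi|^2 N_2\,\diff S$. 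This expresses $b(\tau)$ as a \emph{difference} of two single-sign boundary integrals, one from the top surface (where $N_2>0$) and one from the bottom (where $N_2=-1$), whose relative sizes are explicitly controlled by $e^{-2(1\mp\tau)/\delta}$. The leading part of this difference is an \emph{odd} function of $\tau$, with a computable lower bound $\tilde b_2(\tau)\gtrsim\tau^{1/2}e^{-2/\delta}$ for $0<\tau\le\delta$, which dominates the error of size $\delta^{-7/4}e^{-3(1-|\tau|)/\delta}$ at $\tau = C\delta^{-7/2}e^{-2/\delta}$. The intermediate value theorem then produces a zero, with no need to estimate $\partial_\tau b$. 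Your proposal acknowledges the bookkeeping would be delicate, but without the identity \eqref{motivational identity} you have no clean handle on the leading behavior of $b(\tau)$, and the IFT route you sketch is precisely what the authors say they could not make work.
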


\begin{centering} 
\begin{figure} 
\includegraphics[width=.7\linewidth]{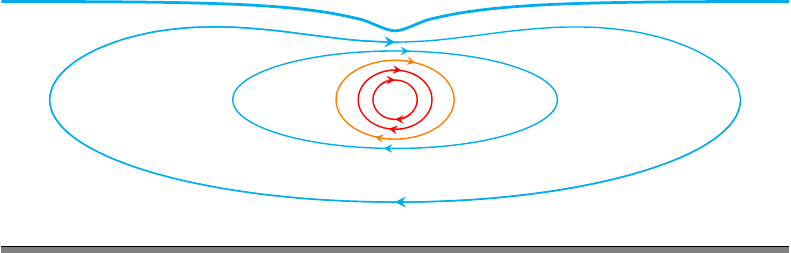}
\caption{Schematic representation of the streamline pattern and free surface.  Blue lines indicate positive vorticity, red is negative, and orange is zero.  Note that there is a critical layer and all streamlines are closed except the boundary components.  For stationary waves, the fluid particles will move exactly along these streamlines.  The above configuration therefore differs rather dramatically from that of a typical irrotational wave:  in the absence of vorticity and surface tension, it is know that \emph{none} of the particle paths can be closed \cite{constantin2007particle}.   }\label{streamlines figure}
\end{figure}
\end{centering}
We first  comment on the vorticity and the surface profile given in the above theorem. On the one hand, from Proposition \ref{prop:sign of Uy}, Corollary \ref{C:Unorms} and \eqref{Psi asymptotics}, we see that the kinetic energy is of $O(1)$. Roughly, 
\[
|v|_{L^2 (\Omega)} = |\nabla \Psi|_{L^2(\Omega)} = |\nabla U|_{L^2 (\R^2)} + o( e^{-\frac 1{2\delta}}),
\]
while the corresponding vorticity is spiked in the sense that
\[ 
\omega = \frac{1}{\delta^2} \gamma \left(U(\tfrac{\placeholder}{\delta} )\right) + o(e^{-\frac 1{2\delta}} ), \quad |\omega|_{L^\infty (\Omega)} =O\left(\tfrac 1{\delta^2}\right), \quad |\omega|_{L^1 (\Omega)} = |\Delta U|_{L^1 (\R^2)} + o( e^{-\frac 1{2\delta}}).
\]
On the other hand, the {total vorticity} is exponentially small in \(0 < \delta \ll 1\):
\[ 
\int_{\Omega} \omega \, \diff x = \int_\Omega \Delta \Psi \, \diff x = \int_{\p \Omega} N \cdot \nabla \Psi \,  \diff S = o( e^{-\frac 1{2\delta}}).
\]
By \eqref{Psi asymptotics} and the definition of $\Psi_0$, $\chi_\Omega (x) \omega(x) \to 0$ for any $x \ne 0$ as $\delta \searrow 0$, where $\chi_\Omega$ is the characteristic function of $\Omega$.  Then from the above integral estimate we can readily prove $\int_{\R^2} f \chi_\Omega \omega \, \diff x \to 0$ as $\delta \searrow 0$ for any continuous $f$ compactly supported in $\R^2$. Therefore, as a measure, $\chi_\Omega \omega\,  \diff x$ converges weakly to $0$ as $\delta \searrow 0$. 
 However, the vorticity has a rich spatial structure in a domain on the scale of $O(\delta)$ where its point-wise value is $O(\frac 1{\delta^{2}})$. Moreover, as $\omega$ is $O(1)$ in $ L^1(\Omega)$, these waves exhibit a highly localized but strong rotational vector field with kinetic energy of order $O(1)$.  

Since $\omega$ concentrates far away from $\p\Omega$ and the total vorticity is exponentially small, $\p \Omega$ is only weakly impacted by the spike.  This fact is reflected in the exponential smallness of $\eta$ in \eqref{eta asymptotics}. According to Proposition \ref{prop:sign of Uy}, 
the leading term $\eta_0$ given in \eqref{eta asymptotics} satisfies $- \eta_0 (x_1) \ge \frac 1C \delta^{-1/2} e^{-2/\delta}$ for $|x_1| < C^{-1}$ for some $C>0$ independent of $\delta$, while its tail is much smaller. Therefore the concentrated vorticity $\omega$ creates a surface depression in the near field with rapid decay as $|x_1| \to \infty$; see Figure~\ref{streamlines figure}.

\subsection{History and relation to our construction} \label{sec:intro:history}

Rotational steady water waves have been a very active area of research for nearly two decades, beginning with the construction of large-amplitude periodic gravity waves by Constantin and Strauss \cite{constantin2004exact}.  These authors used bifurcation theory starting from a fixed shear flow, and their methodology has since been adapted and expanded upon in many ways, see \cite{constantin2011book}.   It is important to note that, while there do exist explicit rotational water waves (for example, \cite{gerstner1809theorie,crapper1957exact,kinnersley1976exact,constantin2001edge,henry2008gerstner}), they are exceedingly rare.  From that perspective, the main contribution of \cite{constantin2004exact} was its systematic treatment of a broad class of vorticity distributions.  However, Constantin--Strauss  --- and most of the works that followed them --- require both that $\omega$ is non-localized and that there are no interior stagnation points. In particular, smooth perturbation of a shear flow could never yield 
decaying vorticity.  Interior stagnation and critical layers (regions of closed streamlines), however, can be constructed using variants of this approach. Early papers of Simmem and Saffman \cite{simmen1985steady} and da Silva and Peregrine \cite{dasilva1988steep} considered this regime through formal asymptotic analysis and numerical bifurcation theory.   In \cite{MR2409513}, it was rigorously shown that the nonlinear particle paths in the linearized system can have closed orbits, and the behavior of small waves with constant vorticity was studied.  Based on it, Wahl\'en \cite{wahlen2009steady} constructed exact periodic waves with one critical layer, and similar waves were subsequently constructed using a harmonic-functions approach, globally, in \cite{constantin2016global}. These works all treat constant vorticity and the situation where the linearized problem at the shear flow has a one-dimensional kernel. One can also find steady waves with critical layers bifurcating from two-dimensional \cite{ehrnstrom2011steady}, three-dimensional \cite{EW14ARMA}, and even arbitrarily high dimensional kernels \cite{MR3808597} of affine or near-affine vorticity functions, as well as from one-dimensional kernels of constant vorticity with one discontinuity \cite{MR3436246}.  Very recently, a global theory for analytic vorticity functions allowing for several critical layers has been presented in \cite{varholmthesis} (one might note that even affine vorticity can yield arbitrarily many vertically aligned stagnation points.)  The waves built in this paper have vorticity functions of the next order in this development, as Assumption~\ref{assumption:A} implies that $\gamma$ is nonlinear with leading-order linear term, although the method of proof is very different.

The first rigorous construction of traveling capillary-gravity waves with localized vorticity in infinite depth is due to Shatah, Walsh, and Zeng \cite{shatah2013travelling}.  In that paper, two classes of compactly supported vorticity were studied:  solitary and periodic waves with a submerged point vortex, and solitary waves with a vortex patch.  In the former case, $\omega$ is a Dirac measure supported in the interior of $\Omega$. This can be viewed as a solution to a suitably weakened version of the Euler equations.  The proof in \cite{shatah2013travelling} was based on a splitting of the velocity field into a rotational and irrotational component, followed by a bifurcation argument beginning at the trivial solution $(\Psi, \eta) = (0,0)$ with the total vorticity $\int \omega \, \diff x$ serving as the parameter. While the vortex patch solutions were small amplitude, the authors obtained a global curve of periodic traveling waves with a point vortex.  The vortex patches have finite energy and the corresponding vorticity is $C^{0,1}(\Omega)$ and smooth on its support.  
Later, Varholm \cite{varholm2016solitary} extended the ideas in \cite{shatah2013travelling} to the finite-depth case with arbitrarily many point vortices, and Le \cite{le2018existence} studied the existence and orbital stability of finite dipoles inside an infinite-depth capillary-gravity wave. Earlier work in the 50s and 60s that treated point vortices carried by gravity waves in finite depth include \cite{terkrkorov1958vortex,filippov1961motion,filippov1960vortex}. A vortex patch situated near a shoreline and such that the velocity vanishes completely outside a ball has also been constructed  in  \cite{constantin2011dynamical}, using dynamical systems tools.  
 
The capillary-gravity waves in the current work can be said to live between the above-mentioned types. They can be viewed, for \(0 < \delta \ll 1\), 
as smoothed vortex patches or as the limit, as the period tends to infinity, of steady periodic waves with critical-layers. 
We note that in 
\cite{shatah2013travelling}, (i) $\omega$ is single-signed and either a Dirac measure or in $C^{0,1}(\Omega)$;  and (ii)  the measure $\omega\, \diff x$ vanishes absolutely as one approaches the point of bifurcation.  By contrast, in the present paper,  the vorticity changes sign and is smooth \emph{throughout} $\Omega$.  Moreover,  $\omega \, \diff x$ converges to $0$ weakly as $\delta \searrow 0$, while the $L^1$ norm of $\omega$ and the kinetic energy both remain order $O(1)$.  This surprising feature results from 
 the fact that we do not perturb from a shear flow, but singularly from $U$ of \eqref{eq:ground state} which has fixed, positive energy. In all these respects, the vortex spikes constructed in Theorem~\ref{main euler theorem} contrast starkly with the literature described above.  

When $\omega$ is not compactly supported, it is of little help to decompose the velocity field into rotational and  irrotational parts. We are also barred from using shear flows as a model for the stream function. The main new idea is to instead look to the theory of 
spike and spike-layer solutions to singular perturbations of semi-linear elliptic PDE.  These equations typically have the form 
 \be 
 \delta^2 \Delta u  = u - u^p \qquad \textrm{in } D,\label{singularly perturbed PDE} 
 \ee
where $D \subset \mathbb{R}^n$ is a smooth bounded domain, $p > 1$, and Dirichlet or Neumann conditions are prescribed on $\partial D$.
Beginning in the late 80s, versions of \eqref{singularly perturbed PDE} were investigated intensively by the elliptic PDE community resulting in a vast literature;  see, for example, 
\cite{NiTa91, MR1639159, ni1995location, ni1998location}. 

Drawing inspiration from these works, we model our stream function as a rescaled and translated $U(\frac {\placeholder - x_*}\delta)$ on the unknown fluid domain represented by a conformal mapping $\Gamma$.
The translation invariance of the problem leads to a degeneracy --- as can be seen in Assumption~\ref{assumption:B} --- which is resolved through a Lyapunov--Schmidt reduction.  We outline heuristically how to solve the resulting highly degenerate bifurcation equation for $x_*$ in the next subsection. 

To the best of our knowledge, ours is the first work exploring singularly perturbed elliptic equations in the hydrodynamical context.  The method bears certain similarities to Li and Nirenberg's treatment of \eqref{singularly perturbed PDE} in \cite{MR1639159}, in particular, the use of a Lyapunov--Schmidt reduction, bundle coordinates in a tubular neighborhood of a family of translates, and boundary correction projections.  However, we stress that the steady water wave problem presents substantial new difficulties:  the upper boundary is free, the Bernoulli condition \eqref{eq:dynamic} imposed there is completely nonlinear, and the domain $\Omega$ is horizontally unbounded.

\subsection{Heuristic discussions
} \label{sec:intro:motivation}

In this subsection, we discuss several issues related to the {\it finite energy/spatial decaying} assumptions on {\it smooth} steady (stationary or traveling) solutions on fluid domains extending to horizontal infinity. We first observe that the support of the vorticity of such solutions should be the whole of $\Omega$. Otherwise, one expects that the  vorticity will not be smooth over the boundary of its support, as a consequence of the Hopf lemma for the elliptic equation \eqref{eq:main}. 

\subsubsection*{Traveling waves.} While we focus on stationary capillary-gravity waves in the current paper,  by shifting to a moving reference frame, Theorem~\ref{main euler theorem} immediately furnishes families of \emph{traveling} capillary-gravity waves with exponentially localized vorticity.  The velocity field for these waves will be an $H^{k_0-1}$ perturbation of a fixed uniform background current $c e_1 \neq 0$, and the vorticity will be spiked in the same sense as before. 

On the other hand, smooth \emph{finite}-energy waves with a non-zero wavespeed are unlikely to exist. In fact, the vorticity level curves for such waves would be closed loops $C_a = \{\omega =a\}$, which are transported by the velocity field $v = (v_1, v_2)$. Therefore 
\[
v \cdot \nu = c e_1 \cdot \nu = c \nu_1 \qquad \text{ along } \; C_a,
\] 
where $\nu = (\nu_1, \nu_2)$ is the unit outward normal vector of $C_a$. This implies that $|v| \ge \frac 12 |c|$ if $|\nu_1|>\frac 12$, which usually happens on an $O(1)$ proportion of most level curves. Consequently, $|v|$ is likely to be bounded from below on a set with infinite measure, which is prevented by the finite energy assumption.

\subsubsection*{Fluid depth and the boundary condition of the stream function.} In \cite{shatah2013travelling}, traveling capillary-gravity waves with compact vortex patches were constructed in fluids of infinite depth. Slightly modifying the formula of the rotational part of the velocity fields, actually the same construction should also work with finite depth. However, we do {\it not} expect smooth spatially localized stationary waves to exist in infinite depth unless the free surface is overturned. 

In fact, let us temporarily not preclude the possibility of $\Omega$ with infinite depth. Let a solution $\Psi$ of \eqref{eq:main} be given satisfying $v= \nabla^\perp \Psi  \in H^1 (\Omega)$ and $(v \cdot N)|_{\p \Omega} =0$ with $N = (N_1,N_2)$ the outward unit normal to $\Omega$. The latter condition implies that $\Psi$ is locally constant on $\p \Omega$. Fix $\Psi =0$ on $S = \graph{(1+ \eta)}$. 

Much as in the proof of Proposition \ref{prop:sign of Uy}, $\Psi$ and its derivatives decay exponentially as $|x| \to \infty$. Let $\Gamma$ be the antiderivative of $\gamma$ with $\Gamma(0) = 0$. We multiply \eqref{eq:main} by $\Psi_{x_2}$ and integrate to find
\be \label{E:motivation-1} \begin{split} 
\frac{1}{\delta^2} \int_{\p \Omega} \Gamma(\Psi) N_2 \, \diff S & = \frac{1}{\delta^2} \int_{\Omega} \partial_{x_2} \Gamma(\Psi) \, \diff x  = \int_{\Omega}  \p_{x_2} \Psi \Delta \Psi \, \diff x \\
&= - \int_{\Omega} \nabla \p_{x_2} \Psi \cdot \nabla \Psi \, \diff x + \int_{\partial \Omega} \p_{x_2} \Psi \nabla \Psi \cdot N\, \diff S \\   
&= -\frac{1}{2} \int_{\partial\Omega} | \nabla \Psi|^2 N_2 \, \diff S + \int_{\partial \Omega} \p_{x_2} \Psi N \cdot \nabla \Psi \, \diff S \\
& = \frac{1}{2} \int_{\partial\Omega} |\nabla \Psi|^2 N_2 \, \diff S, 
\end{split} \ee
where in the last step above we used that $\Psi$ is locally constant on $\partial \Omega$ and thus \( \nabla \Psi = (N \cdot  \nabla \Psi) N\) holds there. 

The first implication of this equality is that if $\Psi$ is nontrivial, then $S  \subsetneq \p \Omega$.
Otherwise we would have $\int_{S} |\nabla \Psi|^2 N_2 \, \diff S = 0$ with $N_2 >0$, which is impossible. This argument does not rely on anything but the regularity of $\gamma$, in particular, we do not need the full strength of Assumption~\ref{assumption:A} or~\ref{assumption:B}. Non-existence of deep water solitary waves in the presence of algebraically localized vorticity has been more thoroughly investigated in the recent paper  \cite{chen2019existence}; see also \cite{craig2002nonexistence,sun1997analytical,hur2012no,wheeler2018integral} for results on the irrotational case.

Now suppose instead that the domain is finite depth, and set $\p \Omega = S \cup B$, with $B = \{x_2 =-1\}$ denoting the flat rigid lower boundary. Suppose also that $S \cap B =\emptyset$. The properties (i) $|\eta(x_1)| \to 0$ as $|x_1| \to \infty$; (ii) $\nabla \Psi \in L^2 (\Omega)$; and (iii) $\Psi$ is locally constant on $\p \Omega$, together imply that $\Psi|_B = \Psi|_S=0$ based on a simple H\"older estimate on $\Psi$ along vertical lines. Therefore, from \eqref{E:motivation-1}, we infer that 
\be   
\frac{1}{2} \int_{\partial\Omega} |\nabla \Psi|^2 N_2 \, \diff S = 0. \label{motivational identity} 
\ee

\subsubsection*{The reduced (degenerate) equation from the Lyapunov--Schmidt reduction.} 
Equation \eqref{motivational identity} is the key to the proof of our main theorem. 
As mentioned above, we first carry out a Lyapunov--Schmidt reduction argument to reduce the problem to a highly degenerate one-dimensional ``bifurcation'' equation with the parameter $\tau$ as in Theorem \ref{main euler theorem}. One of the usual techniques to handle those somewhat degenerate bifurcation equations is to first use a blow-up argument to search for a non-degenerate direction of the linearized problem, then employ the implicit function theorem. Even though Proposition~\ref{prop:U_0} does imply such linear invertibility of the bifurcation equation, the non-degeneracy we find is far too weak for an (obvious) application of the implicit function theorem to be effective.   

Instead, in Section \ref{proof main theorem sec} we show that the bifurcation equation is equivalent to the above \eqref{motivational identity}.  Now, on the free surface, $N_2 = \left(1+ (\eta')^2 \right)^{-\frac 12}
> 0$, while  $N_2 = -1$ on the flat bed $B$.   If $\nabla \Psi$ is highly localized close to the surface, then the integral there should dominate so that the left-hand side of \eqref{motivational identity} would be positive, and conversely for $\nabla\Psi$ concentrated near the bed.  This mandates a \emph{balancing} between the contributions on the surface and bed. 
That observation is at the heart of the analysis in the last part of Section~\ref{proof main theorem sec}.  It also reveals the importance of the translation parameter $\tau$.  

\subsubsection*{Non-flat bottom and more.} With some modifications, the approach of the current paper should also apply when the bed has nontrivial topography\footnote{This question was also raised by Shuangjie Peng and Shusen Yan during a talk given by the third author.}. Indeed, suppose $\p \Omega = S \cup B$, where $B$ is now a horizontally asymptotically flat rigid bottom, for simplicity taken even in $x_1$. Thus we expect the vorticity to be localized at $\tau e_2 = (0, \tau) \in \Omega$ for some $\tau$. We can parametrize the unknown $\Omega$ by a conformal mapping defined on a fixed domain above $B$ and below $\{x_2=1\}$. Based on Proposition~\ref{prop:sign of Uy}, one may adjust the basic estimates in Sections~\ref{estimates section} and \ref{sec:Spec} accordingly to carry out the Lyapunov--Schmidt reduction and arrive at a highly degenerate one-dimensional reduced bifurcation equation that would still turn out to be equivalent to \eqref{motivational identity}. As in the current paper, the distance from $\tau e_2$ would again play a crucial role. Let 
\[
d (\tau) = \dist (\tau e_2, \p \Omega). 
\]
Much like \cite{MR1639159}, stationary solutions  are expected to exist with  a localized vorticity concentrated near strict local maximums of $d(\tau)$. However, when multiple localized vorticity locations are considered or when $B$ is not necessarily even in $x_1$, a sphere packing problem arises. See, for example, \cite{GW99}.

Lastly, we remark that it would be very interesting, though quite difficult, to study gravity water waves with a spike vortex.  Surface tension allows us to treat the Bernoulli condition \eqref{eq:dynamic} essentially as an elliptic problem on the boundary.  In fact, the linear part is invertible, which greatly simplifies the analysis; see the proof of Lemma~\ref{reduction lemma}.   Perhaps with much more careful estimates it would be possible to allow for $\sigma = 0$. 

\subsection{Plan} \label{sec:intro:plan} 

We begin, in Section~\ref{sec:perturbation}, by rewriting the stationary water wave problem into an analytically more  tractable form. Using a conformal mapping $\Gamma$,  the fluid domain is pulled back to a fixed slab $\slab_\delta$ of width $2/\delta$; this mapping $\Gamma$ becomes one of the unknowns, taking the place of $\eta$.  We impose the desired ansatz \eqref{eq:def psi} on the stream function, thereby reformulating the problem in terms of the deviation of $\Psi$ from a translated and rescaled solution $U$ to \eqref{eq:ground state}.

In Section~\ref{estimates section}, we obtain  leading-order approximations of $U$ and a boundary correction operator as well as rather precise exponentially small bounds on the remainders.

Section~\ref{sec:Spec} is devoted to the study of the linearized problem 
at an approximate solution. 
Specifically, we prove that there is a small simple eigenvalue $l = l(\delta) = O(e^{\frac {2-|\tau|}\delta})$ related to the direction of $\p_{x_2} U$. The linearized problem is uniformly non-degenerate in the complementary codimension-1 directions. 

All of these tools are used in Section~\ref{proof main theorem sec} to prove Theorem~\ref{main euler theorem}.  Adopting bundle-type coordinates over $\tau \in (-\frac 13, \frac 13)$, 
we carry out a Lyapunov--Schmidt reduction in the non-degenerate codimension-1 directions to reduce the problem to 
a one-dimensional bifurcation equation. As mentioned above, this bifurcation equation is equivalent to \eqref{motivational identity} and the proof 
is completed by invoking the intermediate value theorem.  Here the idea of balancing the two surface integrals is made rigorous through careful estimates of all the quantities involved.  Indeed, while this analysis is quite delicate, the simple identity \eqref{motivational identity} is the key to the argument.    

\subsection*{Notation.}  Throughout the paper  \(\lesssim\), \(\gtrsim\) and \(\eqsim\) indicate relations that are valid up to a positive factor which can be chosen uniformly in \(\delta\) small enough and $\tau \in [-\frac 13, \frac 13]$. 
Complex scalars are sometimes viewed as 2-d real vectors, hence ``$\cdot$" between complex quantities denotes their dot product. For a given $L^2$ function $f \ne 0$ defined on certain domain, we often use $f^\perp$ to denote the $L^2$-orthogonal complement of $f$. We also use $\Diff = \partial_{x_1}$.


\section{Reformulation}\label{sec:perturbation}

As the first step toward proving Theorem~\ref{main euler theorem}, the stationary water wave problem \eqref{eq:main}, \eqref{E:SKinematic}, and \eqref{eq:dynamic} will be reformulated on a fixed domain, and we will build in the spike ansatz for the stream function mentioned in \eqref{eq:def psi}. The final product of these efforts is an equivalent transformed problem \eqref{perturbed problem} that is posed on an infinite strip.

\subsection{Rescaling and parametrization}

We start by introducing new coordinates that eliminate the free boundary.  As mentioned in the introduction, this can be achieved at little cost in the irrotational regime; see, for example, \cite{constantin2009pressure,constantin2010particle}.  With vorticity, however, one expect to pay a price in the form of increased complexity of the equations.   Given that the highest-order operator in the semi-linear equation \eqref{eq:main} is the Laplacian, it is natural to work with conformal mappings.  With that in mind, define the \emph{reference domain} to be \(\R \times (-1,1)\), which we identify with the complex strip
\[
\C_{|z_2| < 1}  = \{ z  = z_1 + \I z_2 \in \C \colon  |z_2| < 1 \}.
\]
We will look for fluid domains $\Omega$ that are expressed as the image of the reference domain under a near-identity holomorphic mapping.  Specifically, let $\Gamma = \Gamma_1 + \I \Gamma_2 \colon \C_{|z_2| < 1} \to \C$  be holomorphic and satisfy 
\be \label{E:conformal}
|\Gamma|_{H^{\frac{5}{2}}} \ll 1, \qquad \Gamma(-\overline z) = - \overline{\Gamma(z)}, \qquad {\Gamma_2|_{\{ z_2 = -1\}} = 0}. 
\ee
Note this implies that \(z_1 \mapsto \Gamma_1\) is odd and \(z_1 \mapsto \Gamma_2\) is even. 
Such a conformal mapping is uniquely determined by $\Gamma_2|_{x_2 =1}$. In fact, since \(\Gamma_2\) is harmonic, 
\[
(\partial_{x_2}^2 - \xi_1^2) \F_{x_1} \Gamma_2 = 0,
\]
where $\F_{x_1}$ denotes the Fourier transform in the $x_1$ variable. By construction, $\Gamma_2$ vanishes on the bottom of the domain, and hence it depends analytically upon its trace on the upper boundary, \(\Gamma_2(\placeholder,1)\). Explicitly, 
\[ 
\left(\F_{x_1} \Gamma_2\right) (\xi_1, x_2) = \frac{\sinh{\left( |\xi_1|(x_2+1) \right)}}{\sinh{\left(2|\xi_1|\right)} }  (\F_{x_1} \Gamma_2) (\xi_1,1) \qquad \textrm{for all } \xi_1 \in \mathbb{R}, \, |x_2| < 1,
\]
so we have
\be \label{E:HarmExt} 
\Gamma_2 = \frac{\sinh{\left( |\partial_{x_1}|(x_2+1) \right)}}{\sinh{\left(2|\partial_{x_1}|\right)} }   \Gamma_2(\placeholder,1) \qquad \textrm{in } {\C}_{|z_2| < 1},
\ee 
and
\be \label{E:DN}
\partial_{x_2} \Gamma_2  = |\partial_{x_1}| \coth{\left(2|\partial_{x_1}| \right)} \Gamma_2  \qquad \text{on } \{x_2 = 1\}.
\ee
Observe that $|\partial_{x_1}| \coth{\left(2|\partial_{x_1}| \right)}$ above is the Dirichlet--Neumann operator on the strip $\{|x_2| < 1\}$ with a homogeneous Dirichlet condition imposed on the lower boundary. The real part $\Gamma_1$ is a harmonic conjugate of $\Gamma_2$ whose one degree of freedom is fixed by the symmetry in $x_1$. 

The corresponding fluid domain is taken to be 
\[
\Omega := (\id+\Gamma)(\{|x_2| < 1\}) = \left\{ \left(x_1 + \Gamma_1(x), \, x_2 + \Gamma_2(x) \right) \colon |x_2| < 1 \right\}. 
\]
It follows that the free surface is parameterized by $x_1 \mapsto (x_1, 1) + \Gamma(x_1, 1)$, for $x_1$ ranging over $\mathbb{R}$.  This curve can also be written as the graph 
\be \label{E:eta}
x_2 =1+ \eta (x_1), \qquad  \eta = \Gamma_2 \circ \left( \id + \Gamma_1(\placeholder , 1) \right)^{-1}. 
\ee

The stream function can be pulled back,
\be \label{relation Phi to Psi}
\Phi = \Psi \circ (\id+\Gamma) \colon \{|x_2| < 1\} \to \R,
\ee
yielding a new unknown defined on the fixed reference domain.   
Then the water wave problem  \eqref{eq:main}, \eqref{E:SKinematic}, and \eqref{eq:dynamic}  become
\[
\left \{ \begin{aligned}
\delta^2 \Delta \Phi &=   |1 + \Gamma^\prime|^2 \,  \gamma( \Phi ) & \qquad\text{ in } \{|x_2| < 1\}, \\
\Phi  &= 0 & \qquad\text{ on } \{|x_2| = 1\}, 
\end{aligned} \right.
\]
together with the transformed Bernoulli condition 
\begin{equation}
\frac{1}{2}  \frac{(\p_{x_2} \Phi )^2}{|1 +\Gamma^\prime|^{2}}   - \alpha^2  \frac{\im (\Gamma^{\prime\prime} (1 + \overline{\Gamma^\prime}))}{| 1 + \Gamma^\prime|^{3}} + g \Gamma_2 = 0 \qquad \text{ on }  \{x_2 = 1\}, \label{eq:dynamic_Phi}
\end{equation}
where we used that $|\nabla \Phi| = |\p_{x_2} \Phi|$ along $x_2 =1$ due to the boundary condition on $\Phi$. Here \(\Gamma^\prime = \partial_z \Gamma = \partial_{x_1} \Gamma\) in view of the fact that \(\Gamma\) is holomorphic, and 
 \[ 
 \kappa = -   \frac{\im (\Gamma^{\prime\prime} (1 + \overline{\Gamma^\prime})) }{| 1 + \Gamma^\prime|^{3}}  \]
 is the signed curvature of the interface. 
 
 Recalling the scaling in \eqref{eq:def psi}, we define 
\be \label{relation phi and Phi}
\varphi  = \Phi (\delta \placeholder),
\ee
which then solves a non-dimensionalized version of the problem for $\Phi$ set on the slab 
\be \slab_\delta = \left\{ x \in \mathbb{R}^2 : |x_2| < \tfrac{1}{\delta} \right\}. \label{def slab} \ee
It is important to realize that this domain is decreasing in \(\delta\), so that in particular \(\slab_{2\delta} \subset \slab_\delta\).  Now it is easy to compute that $\varphi$ satisfies the elliptic equation
\be \label{eq:varphi elliptic} \left\{ \begin{aligned}
\Delta \varphi &=   |1 + \Gamma^\prime(\delta \placeholder)|^2 \,  \gamma( \varphi ) && \qquad\text{in } \slab_\delta\\
\varphi &= 0 && \qquad \textrm{on } \partial\slab_\delta, 
\end{aligned} \right.\ee
and the Bernoulli condition translates to 
\begin{equation}
\frac{1}{2 \delta^{2}}  \frac{\left(\p_{x_2} \varphi(\placeholder,\tfrac{1}{\delta}) \right)^2}{|1 +\Gamma^\prime(\delta \placeholder ,1)|^{2}}   - \alpha^2  \frac{\im (\Gamma^{\prime\prime}(\delta \placeholder, 1) (1 + \overline{\Gamma^\prime(\delta \placeholder, 1)}))}{| 1 + \Gamma^\prime(\delta \placeholder,1)|^{3}} + g \Gamma_2(\delta \placeholder,1) = 0.\label{eq:dynamic_varphi}
\end{equation}

\subsection{Boundary correction} 

Our overarching strategy is to model $\Psi$, and by extension $\varphi$, on a rescaled $U$ of \eqref{eq:ground state}.  However, while $U$ is exponentially localized, it does not satisfy the homogeneous boundary conditions in \eqref{eq:varphi elliptic}.  We  therefore perform a \emph{boundary correction}, modeled on Assumption~\ref{assumption:A}, subtracting a function from $U$ that shares its trace but is exceedingly smaller in the interior. 

For any real-valued function \(f\) in a reasonable Sobolev space (see below) defined on 
$\partial \slab_\delta$, we introduce the extension operator
\[
\mathrm{bc} \colon f \mapsto f_\mathrm{bc},
\]
defined uniquely by
\begin{equation}\label{bc fourier formula} 
 (\F_{x_1} f_{\mathrm{bc}})(\xi_1, x_2)  =  \sum_{\pm} \pm \frac{\sinh{\left( \jbracket{\xi_1}(x_2 \pm \frac 1\delta) \right)} (\F_{x_1} f_{\pm})(\xi_1)}{\sinh{\left(\frac{2\jbracket{\xi_1}}{\delta}\right)}},  
 \end{equation} 
where \(f_{\pm}\) is the restriction of \(f\) on \(\{x_2 = \pm \tfrac{1}{\delta}\}\), and we are using the Japanese bracket notation $\jbracket{\xi_1} = (1+|\xi_1|^2)^{1/2}$.
Provided  that \(f_\pm \in H^s (\R)\), $s>0$, the function \( f_{\mathrm{bc}}\) is an \(H^{s+\frac12} (\slab_\delta)\)-solution\footnote{In general the solution of \eqref{definition bc} need not be unique, as \(\slab_\delta\) is an infinite slab.} of 
\be \label{definition bc} \left\{
\begin{split}
(1 - \Delta) f_\mathrm{bc} &= 0 \qquad \textrm{in } \slab_{\delta} ,\\
f_\mathrm{bc} &= f \qquad \textrm{on } \partial \slab_\delta.
\end{split} \right. \ee
Observe that, due to the localization of $U$ and the assumption $\gamma'(0)=1$, the above problem closely resembles the linearized operator $\gamma^\prime(U) - \Delta$ away from the origin.  
It is worth noting that, for any $s>0$, 
\[
|{ f_{\mathrm{bc}}}|_{H^{s+\frac 12} (\slab_\delta)} \lesssim 
|f|_{H^s (\p \slab_\delta)}.
\]

\subsection{The perturbed problem} \label{perturbed problem section}

Let $U$ be given by Assumption~\ref{assumption:A}.  As discussed in Section~\ref{sec:formulation}, it will be important to consider vertical translates of this function.  For each $\tau \in [-\tfrac{1}{3}, \tfrac{1}{3}]$, let
\begin{equation}\label{eq:U_tau}
U(\placeholder, \tau) = U(\placeholder -\tfrac{\tau}{\delta} e_2).
\end{equation}
With a slight abuse of notation we shall still write \(U\) to denote the function \(U(\placeholder, \tau)\), except when the precise value of \(\tau\) becomes important. At other times, it will be more convenient to use the notation $U(\tau)(\placeholder)$ rather than $U(\placeholder, \tau)$.  
The value \(\frac{1}{3}\) is  unimportant; we will find waves for \(|\tau|\) exceedingly much smaller.
What is important is that the center of vorticity remains closer to the origin than to the boundary of the reference domain, but $\frac 13$ has no special significance.  

We proceed with the \emph{ansatz}
\be \label{definition u}
\varphi {=}  u + U - U_\mathrm{bc},
\ee
where $\textrm{bc}$ denotes the boundary correction from \eqref{definition bc}. Thus $u$ measures the deviation of $\varphi$ from the rescaled, translated, and boundary corrected $U$.  Inserting this into \eqref{eq:varphi elliptic}, we see that it solves the following elliptic PDE set on $\slab_\delta$:
\begin{equation}\label{eq:main_u}\\
\begin{aligned}
\Delta u &= | 1 + \Gamma^\prime(\delta \placeholder) |^2 \gamma(u + U - U_\mathrm{bc}) - \gamma(U)  + U_\mathrm{bc}\\
&= \gamma^\prime(U) u +  | 1 + \Gamma^\prime(\delta \placeholder) |^2 \gamma(u + U - U_\mathrm{bc}) - \gamma(U)  - \gamma^\prime(U) u + U_\mathrm{bc}.
\end{aligned}
\end{equation}
Here, we have made use of the facts that \(\Delta U = \gamma(U)\) and \( \Delta U_\mathrm{bc} = U_\mathrm{bc}\). Similarly, the kinematic condition in \eqref{eq:varphi elliptic} takes the form
\begin{equation}\label{eq:kinematic_u}
u = 0 \qquad \textrm{on } \partial\slab_\delta,
\end{equation}
since $U = U_\bc$ there.  

Consider next the Bernoulli condition \eqref{eq:dynamic_varphi}.  Direct substitution yields 
\be
\begin{split}
0 & = \frac{1}{2 \delta^{2}}  \frac{\left(\p_{x_2} (u + U - U_\mathrm{bc})(\placeholder,\tfrac{1}{\delta}) \right)^2}{|1 +\Gamma^\prime(\delta \placeholder,1)|^{2}}   - \alpha^2  \frac{\im (\Gamma^{\prime\prime}(\delta \placeholder, 1) (1 + \overline{\Gamma^\prime(\delta \placeholder, 1)}))}{| 1 + \Gamma^\prime(\delta \placeholder,1)|^{3}} \\ 
& \qquad + g \Gamma_2(\delta \placeholder,1). 
\end{split}
\label{eq:dynamic_u}
\ee
From the Cauchy--Riemann equations and 
\[
\Gamma^\prime = \partial_{x_1} \Gamma = \partial_{x_2} \Gamma_2 + \I \partial_{x_1} \Gamma_2,
\]
any derivatives involving \(\Gamma\) can be expressed in terms of derivatives of \(\Gamma_2\). Making this replacement in \eqref{eq:dynamic_u} yields
\begin{equation}
\begin{split} 
0  = & 
\frac{1}{2 \delta^{2}}  \frac{\left( \p_{x_2} (u + U - U_\mathrm{bc})\left(\frac{\placeholder}{\delta},\frac{1}{\delta}\right) \right)^2}{(1 +\partial_{x_2}\Gamma_2)^2 + (\partial_{x_1}\Gamma_2)^2}   - \alpha^2  \frac{(1 + \partial_{x_2}\Gamma_2) \partial_{x_1}^2 \Gamma_2 - \partial_{x_1} \Gamma_2 \partial_{x_1x_2} \Gamma_2 }{((1 +\partial_{x_2}\Gamma_2)^2 + (\partial_{x_1}\Gamma_2)^2)^{3/2}}  + g \Gamma_2. \end{split} \label{eq:dynamic_Gamma2}
\end{equation}
Here, all terms involving \(\Gamma_2\) are evaluated at \((x_1,1)\). The idea is that, to the leading order in terms of $\Gamma_2$, the right-hand side of \eqref{eq:dynamic_Gamma2} is determined by the operator $g - \alpha^2 \partial_{x_1}^2$ acting on $\Gamma_2$, which is invertible $H^s(\mathbb{R}) \to H^{s-2}(\mathbb{R})$ for all $s \in \R$. To make this rigorous, 
let 
\be \label{def Gammas}
\Gammas  = \Gamma_2(\placeholder,1)
\ee
be the trace of $\Gamma_2$ on the top of the reference domain $\slab_\delta$.
We have from \eqref{eq:dynamic_Gamma2} and \eqref{E:DN} 
\begin{equation}
\frac{1}{2 \delta^{2}}  \frac{\left(\p_{x_2} (u + U - U_\mathrm{bc})\left( \frac{\placeholder}{\delta},\frac{1}{\delta} \right) \right)^2}{\left(1 +m(\Diff)\Gammas\right)^2 + {\Gammas^\prime}^2}   - \alpha^2  \frac{ (1 + m(\Diff) \Gammas) \Gammas^{\prime\prime}  - \Gammas^\prime m(\Diff) \Gammas^\prime   }{\left((1 +m(\Diff)\Gammas)^2 + {\Gammas^\prime}^2\right)^{3/2}} + g \Gammas = 0, \label{eq:dynamic_eta}
\end{equation}
where \(\Diff = \partial_{x_1}\), $m(\Diff) = |\Diff| \coth{(2|\Diff|)}$,  and \(\Gammas^\prime = \partial_{x_1} \Gammas\). Let $A(\Gammas)$ be a linear operator depending on $\Gammas$ acting on $v\colon \R \to \R$ as   
\[
A(\Gammas) := \left(g- \alpha^2 \left((1 +m(\Diff)\Gammas)^2 + {\Gammas^\prime}^2 \right)^{-\frac{3}{2}} \left(\left(1 + m(\Diff)\Gammas\right) \Diff^2  - \Gammas^\prime m (\Diff) \Diff \right) \right) (g-\alpha^2 \Diff^2)^{-1}.  
\]
Notice also that $|\Diff|$ preserves the even-odd parity. 
For a given smooth \(\Gammas\), \(A\) is a zero-order operator on any Sobolev space \(H_{\mathrm{e}}^s(\R)\), \(s \in \R\),  where here and elsewhere the subscript `e' indicates that the functions are even in $x_1$.
More precisely, if \(\Gammas \in H_{\mathrm{e}}^s(\R)\) for \(s > 3/2\), then the map\footnote{Note here that the lower right \(\mathrm{s}\) used in $\Gammas$ stands for `surface', while the slanted \(s\) is a (general) regularity index.}
\begin{equation}\label{eq:A-regularity}
H_{\mathrm{e}}^s(\mathbb{R}) \ni \Gammas \mapsto A(\Gammas) \in \mathcal{L}(H_{\mathrm{e}}^{s'}(\mathbb{R})) \text{ is analytic, } s' \in [1-s, s-1].
\end{equation}
Recall here that \(H^{-s}(\R)\) is the continuous dual of \(H^{s}(\R)\), whence the lower bound \(1-s\) is needed to ensure that products can be made sense of when applying \(A(\Gammas)\) to \(H^{s'}(\R)\). 
Now \(A(0) = \id\) and we have the bound
\[
| A(\Gammas) - \id|_{\mathcal L(H^{s^\prime})} \lesssim |\Gammas|_{H^{s}}.
\]
Thus \(A(\Gammas) \in \mathcal{L}(H^{s'})$ is invertible for \( |\Gammas|_{H^{s}} \ll 1\). We can now isolate the leading-order terms in \eqref{eq:dynamic_eta} by applying \(A(\Gammas)^{-1}\) to it:
\begin{equation}
\begin{aligned}
0 =  (g   - \alpha^2  \Diff^2) \Gammas 
+  \frac{1}{2 \delta^{2}} A(\Gammas)^{-1} \left[ \frac{\left(\p_{x_2} (u + U - U_\mathrm{bc})\left( \frac{\placeholder}{\delta},\frac{1}{\delta}\right) \right)^2}{(1 +m(\Diff)\Gammas)^2 + {\Gammas^\prime}^2}    \right],
\end{aligned} \label{eq:dynamic_inverse}
\end{equation}
which is valid as long as \( |\Gammas|_{H^{s}} \ll 1\).

Now we are roughly in a position to make a rigorous statement of our problem.  For any $\delta > 0$, we define 
\be \label{E:space} 
X_\delta^k  = H_{\textrm{e}}^k (\slab_\delta) \cap H_0^1(\slab_\delta), \quad k\ge1,\qquad  X_\delta^0 = L_{\textrm{e}}^2(\slab_\delta).  
\ee
Summarizing the analysis of this section, we see that if \(u\), \(\Gammas\), and \(\tau\) satisfy 
\bse \label{perturbed problem}
\begin{align}
(-\Delta+\gamma^\prime(U)) u + F(\tau,u, \Gammas) & = 0 \qquad \textrm{in } \slab_\delta \label{perturbed u eq} \\
(g-\alpha^2 \Diff^2) \Gammas + G(\tau, u, \Gammas) & = 0 \qquad \textrm{on } \mathbb{R}, \label{perturbed Gamma eq} 
\end{align}
\ese
where $F\colon [-\tfrac{1}{3}, \tfrac{1}{3}] \times  X_\delta^k \times H_{\mathrm{e}}^{k}(\mathbb{R}) \to X_\delta^{k-2}$ and $G\colon [-\tfrac{1}{3}, \tfrac{1}{3}] \times X_\delta^k \times H_{\mathrm{e}}^{k}(\mathbb{R}) \to H^{k-2}_{\text{e}}(\R)$ are the mappings
\be 
F(\tau, \placeholder)\colon (u, \Gammas) \mapsto |1+\Gamma^\prime(\delta \placeholder )|^2 \gamma(u+U-U_{\textrm{bc}}) - \gamma(U) - \gamma^\prime(U)u + U_{\textrm{bc}},  \label{def F} \ee
and
\be \begin{split}
G(\tau, \placeholder)\colon (u, \Gammas) \mapsto  
frac{1}{2 \delta^{2}} A(\Gammas)^{-1} \left[ \frac{\left(\p_{x_2} (u + U - U_\mathrm{bc})(\frac{\placeholder}{\delta},\frac{1}{\delta}) \right)^2}{(1 +|\Diff| \coth{(2|\Diff|)} \Gammas)^2 + {\Gammas^\prime}^2}    \right], 
\end{split} \label{def G} \ee
then $(\Psi, \eta)$, reconstructed via \eqref{relation Phi to Psi}, \eqref{relation phi and Phi}, \eqref{definition u}, \eqref{E:conformal}, \eqref{def Gammas}, and the Cauchy-Riemann equations, will solve the stationary water wave problem  \eqref{eq:main}, \eqref{E:SKinematic}, and \eqref{eq:dynamic}.  Recall here that \(U\) is a shorthand for \(U(\placeholder, \tau) = U(\placeholder -\tfrac{\tau}{\delta} e_2)\). For the class of \(\gamma\) satisfying Assumption~\ref{assumption:A} and Assumption~\ref{assumption:B}, the mappings $F$ and $G$ are well defined and continuously differentiable given some basic estimates on $U$ and $U_{\textrm{bc}}$ that are derived in the next section. For that reason, we postpone making a precise statement, or offering a proof, until Lemma~\ref{equivalence lemma}.

\section{Estimates of $U$ and its boundary corrections} \label{estimates section}

This section is devoted to the estimates of $U(\placeholder, \tau)$, its derivatives, and boundary corrections of the same functions, assuming that Assumptions~\ref{assumption:A} and~\ref{assumption:B} from Section~\ref{sec:formulation} hold. 
Finally, we give some estimates of the  nonlinearities $F$ and $G$, defined in \eqref{def F} and \eqref{def G}, in the elliptic system \eqref{perturbed problem}, which is equivalent to the original problem \eqref{eq:main}, \eqref{E:SKinematic}, and \eqref{eq:dynamic} of finding stationary water waves. 
\subsection*{Estimates for $U$, \((\partial_{x_2} U)_\mathrm{bc}\), and  \(\partial_{x_2} (U_\mathrm{bc})\) 
}
We start with some basic estimates on $U$. Recall from Assumption~\ref{assumption:A} that $\gamma \in C^{k_0}$, for a fixed integer $k_0\ge 2$.  

\begin{proposition} \label{prop:sign of Uy} 
Under Assumptions~\ref{assumption:A} and~\ref{assumption:B}, there exists $\lambda \ne 0$ such that 
 \begin{equation}\label{eq:rate of decay}
\lim_{r \to \infty} (-1)^{k} r^{\frac{1}{2}}\e^{r} \p_r^k U(r) = \lambda, \qquad\text{  for all } \; 0\le k \le k_0 +2.
 \end{equation}
\end{proposition}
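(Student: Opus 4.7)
My plan is to compare $U$ to the fundamental decaying solution $K_0$ of the modified Bessel equation, which governs the linearization of $\Delta U = \gamma(U)$ at $U = 0$. First, I would establish a preliminary exponential decay: writing $\gamma(V) = V + O(V^2)$, the equation reads $-\Delta U + U = -h(U)$ with $h(V) := \gamma(V) - V = O(V^2)$. Since $U(x), \nabla U(x) \to 0$ by Assumption~\ref{assumption:A}, for any fixed $\sigma \in (0,1)$ a standard comparison against the radial supersolution $Ce^{-\sigma|x|}$ of $-\Delta + (1-\sigma^2)$ yields $|U(x)| \lesssim e^{-\sigma|x|}$; interior elliptic estimates then propagate the same rate to all derivatives up to order $k_0+2$.

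Next, I would pass to the radial ODE
\[
U'' + \tfrac{1}{r}U' - U = h(U), \qquad |h(U(r))| \lesssim e^{-2\sigma r},
\]
and apply variation of parameters. The homogeneous equation is the modified Bessel equation of order $0$, with fundamental solutions $K_0, I_0$ satisfying $K_0(r) \sim \sqrt{\pi/(2r)}\,e^{-r}$, $I_0(r) \sim (2\pi r)^{-1/2}\,e^{r}$, and Wronskian of order $1/r$. The decay of $U$ at infinity forces the $I_0$-coefficient to vanish, producing the representation
\[
U(r) = \left(\lambda_0 - \int_{r_0}^{r} s\, I_0(s)\, h(U(s))\, ds\right) K_0(r) - I_0(r) \int_{r}^{\infty} s\, K_0(s)\, h(U(s))\, ds
\]
for some $\lambda_0 \in \mathbb{R}$ and $r_0$ large. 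Since $h(U(s)) = O(e^{-2\sigma s})$, the second integral contributes $o(K_0(r))$ and the first converges absolutely, so
\[
r^{1/2} e^r U(r) \longrightarrow \sqrt{\pi/2}\,\lambda_*, \qquad \lambda_* := \lambda_0 - \int_{r_0}^{\infty} s\, I_0(s)\, h(U(s))\, ds.
\]

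The main difficulty is showing $\lambda_* \neq 0$. If $\lambda_* = 0$, then $U$ decays strictly faster than $K_0$; substituting this improved rate back into $h$ and re-running the integral representation produces a strictly better decay, and iterating the bootstrap forces $|U(r)| \lesssim e^{-\sigma r}$ for every $\sigma > 0$. Viewing $U$ as a solution of the linear elliptic equation $-\Delta U + V(x) U = 0$ on $\mathbb{R}^2$ with bounded potential $V := \gamma(U)/U$ (extended by $\gamma'(0) = 1$ where $U = 0$), unique continuation at infinity---or, since $U$ is radial, a direct Gronwall argument on the first-order ODE system for $(U, U')$---forbids any nontrivial super-exponentially decaying solution, contradicting the nontriviality assumption in Assumption~\ref{assumption:A}. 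Hence $\lambda := \sqrt{\pi/2}\,\lambda_* \neq 0$, proving \eqref{eq:rate of decay} for $k = 0$.

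Finally, the asymptotics for $1 \leq k \leq k_0+2$ follow by differentiating the integral representation, using $K_0' = -K_1$ together with the classical fact that every $K_j$ shares the leading asymptotic $\sqrt{\pi/(2r)}\,e^{-r}$. Each differentiation flips the sign and preserves the magnitude of the $\lambda_* K_0$ contribution, while the remainder stays of strictly higher order thanks to $\gamma \in C^{k_0}$ and $U \in C^{k_0+2}$.
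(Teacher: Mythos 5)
Your proof is correct in substance but takes a genuinely different route from the paper. You use a variation-of-parameters representation against the modified Bessel pair $(K_0, I_0)$, force the coefficient of the growing solution to vanish, identify $\lambda_*$ as the limit, and then rule out $\lambda_*=0$ by bootstrapping the decay rate and invoking a Gronwall bound on $(U,U')$. The paper instead passes to the first-order system in the variables $w_1 = \tfrac12(U+\partial_r U)$, $w_2 = \tfrac12(U-\partial_r U)$, $s = 1/r$, constructs a $C^{k_0}$ center-stable manifold near the unstable equilibrium $(0,0,0)$, and reads off the asymptotics (and the non-vanishing of $\lambda$) from the dynamics on that manifold. Both approaches are legitimate; yours is more elementary and self-contained given classical Bessel asymptotics, while the paper's invariant-manifold route is perhaps more robust to changes in the structure of the nonlinearity and avoids explicit special functions. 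Two small refinements would strengthen your write-up. First, you should record explicitly that the preliminary comparison rate $\sigma$ must be chosen above $1/2$ (any $\sigma\in(1/2,1)$ works) so that $\int^\infty s\,I_0(s)\,h(U(s))\,ds$ converges absolutely; otherwise the definition of $\lambda_*$ is not available. Second, the last paragraph as stated (``differentiating the integral representation $k$ times'') would naively require $\gamma\in C^{k-1}$, which exceeds $C^{k_0}$ at $k=k_0+2$; the clean fix, which is also what the paper does, is to establish $k=0,1$ and then bootstrap via the ODE $U'' = -\tfrac1r U' + \gamma(U)$, which only ever differentiates $\gamma(U)$ up to order $k-2\le k_0$ (equivalently, note the Wronskian cancellation that reduces the integral representation for $U''$ to $U - \tfrac1r U' + h(U)$).
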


\begin{remark}\label{rem:rate of decay}
As 
\[
\nabla = \begin{pmatrix} \cos \theta & -\frac 1r \sin \theta \\ \sin \theta & \frac 1r \cos \theta \end{pmatrix} \begin{pmatrix} \p_r \\ \p_\theta \end{pmatrix},
\]
and 
\[ 
\p_{x_2}^2 = \sin^2 \theta \p_r^2 + \frac {2\cos\theta \sin \theta}r \p_{r \theta} + \frac {\cos^2 \theta}{r^2} \p_\theta^2 + \frac {\cos^2\theta}r \p_r -\frac{\cos\theta \sin \theta}{r^2}\p_\theta,  
\] 
when applied to radial functions, we have 
\be \label{E:p2U}
\p_{x_2} U= \sin \theta U_r, \quad \p_{x_2}^2 U = \sin^2 \theta U_{rr} + \frac {\cos^2\theta}r U_r.
\ee
Proposition~\ref{prop:sign of Uy} readily induces signs on the Cartesian derivatives of these functions. In particular, \(\sign \partial_{x_2} U = -\sign (\lambda x_2)\) globally  with
\[
\partial_{x_2} U \eqsim  - \lambda x_2 r^{-\frac{3}{2}} e^{-r}, \quad \p_{x_2}^2 U \eqsim \lambda x_2^2 r^{-\frac{5}{2}} e^{-r}
\]
when \(r 
\gg 1\). Note also that
\[
|1 - \gamma'(U)| \lesssim r^{-\frac{1}{2}} e^{-r}, \quad |\gamma(U) - U| \lesssim r^{-1} e^{-2r}, \qquad r\gg 1.
\]
\end{remark}

\begin{remark}\label{rem:sign of Uy tau}
For \(|\tau|< \frac{1}{3}\), the function \(U(\placeholder, \tau)\) from \eqref{eq:U_tau} is just a translation of the center and global maximum of the radial function \(U\) from the origin to \((0,\frac{\tau}{\delta})\). It follows that Proposition~\ref{prop:sign of Uy} applies to 
\(U(\placeholder, \tau)\) 
with \(r\) changed accordingly.
\end{remark}

\begin{remark} \label{R:UDecay}
The solution $U$ to \eqref{eq:ground state} is often obtained through a variational approach carried out in $H^1$ space. In fact, for any $\gamma \in C^1 (\R^2)$ satisfying $\gamma(0)=0$, any radial solution $U \in C^{2} (\R^2) \cap H^1 (\R^2)$ automatically satisfies the decay assumption $\lim_{r\to \infty}  U(r) = \lim_{r\to \infty} U'(r) =0$ in \ref{assumption:A}. This is due to the inequality 
\[
|U(r_2) - U(r_1)| = |\int_{r_1}^{r_2} U'(r) dr| \le |U'|_{L^2 (\R^2)} |\int_{r_1}^{r_2} \frac 1r dr|^{\frac 12} =  |U'|_{L^2 (\R^2)} |\log \frac {r_2}{r_1} |^{\frac 12}
\] 
which implies $ \lim_{r\to \infty} U(r) =0$ hence $U$ also bounded. The boundedness of $U$ yields $\Delta U = \gamma(U) \in L^2(\R^2)$ and thus $U \in H^2 (\R^2)$. Therefore $ \lim_{r\to \infty} U'(r) =0$ follows from the same argument. Obviously one may also replace $U \in C^{2} (\R^2) \cap H^1 (\R^2)$ by $U \in C^{2} (\R^2) \cap L^2 (\R^2) \cap L^\infty (\R^2)$.  
\end{remark}

\begin{proof}[Proof of Proposition~\ref{prop:sign of Uy}]  
The decay rate \eqref{eq:rate of decay} is stated by Li and Nirenberg \cite{MR1639159} for the case \(\gamma(t) = t - t^p\), with a reference to an earlier paper of Berestycki and P.~L. Lions  \cite{MR695535}.  However, while that work could be extended to our setting, as written it does not contain the same sharp result and it is restricted to three or higher dimensions. Here we provide a sketch of a proof that does cover the case of interest; it is based on invariant manifold methods rather than variational techniques. For a reference, see for example \cite{MR2224508}.

In polar coordinates the semi-linear problem for $U$ is
\begin{equation}\label{eq:radial coordinates}
\partial_r^2 U = -\frac{1}{r} \partial_r U + \gamma(U)
\end{equation}
and thus it suffices to obtain the estimate for $k=0, 1$, due to the fact $\gamma'(0)=1$. Letting
\[
w_1 = \frac 12(U + \p_r U), \quad w_2 = \frac 12 (U- \p_r U), \quad s = \frac 1r, \quad \gamma_1 (U) = \gamma(U) - U = O(U^2), 
\]
we rewrite \eqref{eq:radial coordinates} as 
\be \label{E:blow-up} \begin{cases}
\p_r w_1 = (1-\frac s 2) w_1 + \frac s2 w_2 + \frac 12 \gamma_1 (w_1 + w_2),\\
\p_r  w_2 = \frac s2 w_1 - (1+\frac s 2) w_2 - \frac 12 \gamma_1 (w_1 + w_2),\\
\p_r s = - s^2. 
\end{cases} \ee 
Clearly $(0,0, 0)$ is an unstable equilibrium of the ODE system with $w_1$, $w_2$, and $s$ being in the unstable, stable, and the center directions, respectively. Therefore there exists a $C^{k_0}$ center-stable manifold $W^{\mathrm{cs}}$ in a neighborhood of $(0,0,0)$ given by  a graph 
\[
w_1 = \phi(w_2, s), \; \text{ with } \phi \in C^{k_0} \text{ and } \phi (0,0) =0, \quad \nabla \phi(0, 0) =0.
\]
Even though the center-stable manifold $W^{\mathrm{cs}}$ is usually not unique, the subset $W^{\mathrm{cs}} \cap \{s\ge 0\}$ is indeed unique because of its positive invariance under the ODE flow. 
Due to Assumption \ref{assumption:A}, both the orbit corresponding to $U$ as well as the trivial state $(0, 0, s = \frac 1r)$ converge to $(0, 0, 0)$ as $r\to+ \infty$.  Hence they both belong to $W^{\mathrm{cs}}$. This implies 
\[
\phi(0, s)=0, \; \text{ and thus } w_1 = \phi(w_2, s) = O\left(|w_2| (|s| +|w_2|)\right), \quad |w_2|,\, |s|\ll1.
\]
Therefore the only orbit on $W^{\mathrm{cs}}$ intersecting $\{w_2=0\}$ is the one corresponding to the trivial solution. On $W^{\mathrm{cs}}$, the $w_2$ equation in \eqref{E:blow-up} and the above properties of $\phi$ yield  
\[
\left|\p_r w_2  + \left(1+\frac s2 -\frac s2 \phi_{w_2} (0, s) \right) w_2\right|  = O(w_2^2), \quad { |w_2|,\, |s|\ll1.}
\]
Using this, we first calculate that  
\[
\frac d{dr} (e^r w_2^2) = \big(1 - 2(1+\frac s2 -\frac s2 \phi_{w_2} (0, s)) + O(|w_2|)\big) e^r w_2^2 \le 0, \quad { |w_2|,\, |s|\ll1.}
\]
Therefore $e^{\frac r2} |w_2|$ is decreasing in $r$ for $1\ll r$ and thus $\int_{r_0}^\infty w_2 \, \diff r$ converges absolutely. Moreover the above estimate of $\p_r w_2$ on $W^{\mathrm{cs}}$ further implies  
\[
w_2 (r) = \left(\frac {r_0}r \right)^{\frac 12} e^{r_0-r} e^{\tilde w(r)} w_2(r_0), \quad \tilde w(r) = \int_{r_0}^r \left[ \frac 1{2r'}  \phi_{w_2} \left(0, \frac 1{r'}\right) + O\left(|w_2(r')|\right) \right] \, \diff r'.  
\]
Since $\phi_{w_2} (0, s)= O(s)$, the above estimate implies that $\lim_{r\to +\infty} r^{\frac 12} e^r w_2(r)$ exists and belongs to $(0, \infty)$, which along with the fact that $w_1 = O\left(|w_2| (|\frac 1r| +|w_2|)\right)$ yields \eqref{eq:rate of decay} for $k=0,1$. 
\end{proof}

\begin{figure}
\centering
    \subfloat[$\tau=0$]{{\includegraphics[scale=0.80]{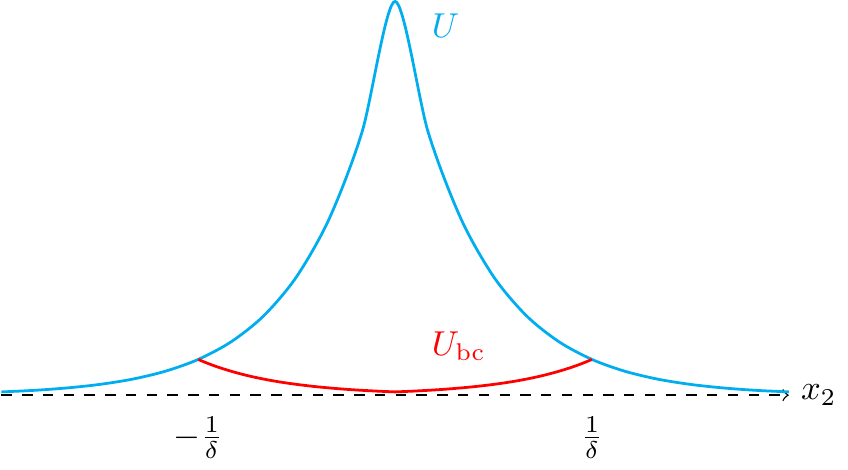} }}
    \qquad
    \subfloat[$\tau \in (0,\tfrac{1}{3})$]{{\includegraphics[scale=0.80]{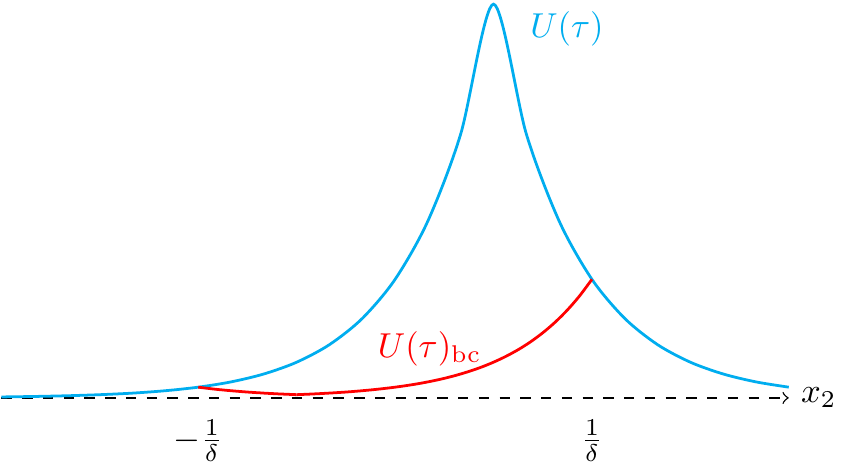} }}
    \caption{Graphs of $U$ and $U_\bc$ along the line $x_1 = 0$.  On the left, $U(x)$ is centered at the origin; on the right, it is shifted closer to the free surface. See also Corollary~\ref{C:Ubc}.} 
    \label{U Ubc figure}
\end{figure}

The following corollary will be used to analyze the boundary correction operator. For this and the coming results, especially Corollary~\ref{C:Ubc}, it can be good to consult Figure~\ref{U Ubc figure}. Note, in particular, that the estimate below essentially concerns the behavior of \(U\) \emph{outside} of the slab \(\slab_\delta\) (on the slab reflected over its own boundaries, modulo the translation \(\tau\).)

\begin{corollary} \label{C:Unorms}
For any $\tau \in [-\frac 13, \frac 13]$, $0\le k \le k_0+1$, and $0\le k'\le k_0+2$, $U(\tau)$ and $\p_{x_2} U(\tau)$ satisfy
\[
\left |U(\placeholder, \pm \tfrac 2\delta -\placeholder, \tau) \right|_{H^{k'} (\slab_\delta)}, \, \left|\p_{x_2} U(\placeholder, \pm \tfrac 2\delta -\placeholder, \tau) \right|_{H^{k} (\slab_\delta)} \eqsim \delta^{\frac 14} e^{-\frac {1 \mp \tau}\delta}.
\]
\end{corollary}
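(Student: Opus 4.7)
The plan is to reduce both estimates to a single Laplace-type integral. For the $+$ case of the first estimate, I would first apply the unit-Jacobian substitution $y_1 = x_1$, $y_2 = \frac{2-\tau}{\delta} - x_2$, which maps $\slab_\delta$ onto the strip $T_+ = \mathbb{R} \times (\frac{1-\tau}{\delta}, \frac{3-\tau}{\delta})$ and transforms $U(\placeholder, \frac{2}{\delta} - \placeholder, \tau)$ into $U(y)$. The $-$ case is handled symmetrically via $y_2 = -\frac{2+\tau}{\delta} - x_2$, producing a strip whose closest approach to the origin is $\frac{1+\tau}{\delta}$; in both cases the distance from the transformed strip to the origin equals $(1 \mp \tau)/\delta$, which drives the exponential rate. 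Since the substitution is affine with unit Jacobian, it commutes with differentiation and preserves $L^2$ norms, so the problem reduces to estimating $\|\p^\alpha U\|_{L^2(T_\pm)}$ for $|\alpha| \le k'$; the reduction for $\p_{x_2} U(\tau)$ is identical after first taking the $\p_{x_2}$-derivative.

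For the core asymptotic, I would substitute the sharp decay $|U(r)|^2 = \lambda^2 r^{-1} e^{-2r}(1 + o(1))$ from Proposition~\ref{prop:sign of Uy} and pass to polar coordinates $(r, \phi)$ with $y_1 = r\cos\phi$, $y_2 = r\sin\phi$. Performing the radial integral over $r \in (a/\sin\phi,\, (a + 2/\delta)/\sin\phi)$, where $a := (1 \mp \tau)/\delta$, reduces the $L^2$ integral, up to an exponentially smaller $e^{-4/\delta}$ correction from the far boundary of the strip, to
\[
\lambda^2 (1 + o(1)) \int_0^\pi \tfrac{1}{2}\, e^{-2a/\sin\phi}\, d\phi.
\]
Laplace's method at the unique maximum $\phi = \pi/2$ of $1/\sin\phi$ (expanding $1/\sin\phi = 1 + (\phi - \pi/2)^2/2 + O((\phi - \pi/2)^4)$) then yields the leading value $\lambda^2 \frac{\sqrt{\pi}}{2\sqrt{a}}\, e^{-2a}(1 + o(1))$. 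Taking the square root and using $1 \mp \tau \in [\frac{2}{3}, \frac{4}{3}]$ produces $\|U(\placeholder, \pm \frac{2}{\delta} - \placeholder, \tau)\|_{L^2(\slab_\delta)} \eqsim \delta^{1/4} e^{-(1 \mp \tau)/\delta}$, uniformly in $\tau \in [-\frac{1}{3}, \frac{1}{3}]$ and small $\delta$.

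Finally, to extend from $L^2$ to $H^{k'}$ and to handle $\p_{x_2} U(\tau)$, I would invoke Proposition~\ref{prop:sign of Uy} once more: each radial derivative $\p_r^j U$ with $j \le k_0 + 2$ satisfies $|\p_r^j U(r)| \lesssim r^{-1/2} e^{-r}$. Writing Cartesian partials of a radial function in polar form via Remark~\ref{rem:rate of decay} expresses $\p^\alpha U$ as a bounded linear combination of products $r^{-m} \p_r^j U$ with $j + m = |\alpha|$ and bounded trigonometric coefficients, so $|\p^\alpha U(y)| \lesssim |y|^{-1/2} e^{-|y|}$ for $|\alpha| \le k_0 + 2$ (the dominant term being $m = 0$). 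The same Laplace argument then gives $\|\p^\alpha U\|_{L^2(T_\pm)} \lesssim \delta^{1/4} e^{-(1 \mp \tau)/\delta}$ for each such $\alpha$, and summing over $|\alpha| \le k'$ yields the $H^{k'}$ upper bound; the matching lower bound is immediate from $\|\cdot\|_{H^{k'}} \ge \|\cdot\|_{L^2}$. The identical analysis treats $\p_{x_2} U(\tau)$, with the constraint $k \le k_0 + 1$ simply reflecting the single derivative spent in passing from $U$ to $\p_{x_2} U$. The one delicate point is keeping the Laplace expansion uniform in $(\delta, \tau)$, which is routine since $a = (1 \mp \tau)/\delta \to \infty$ uniformly on the parameter range.
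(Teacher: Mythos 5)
Your proof is correct and takes essentially the same route as the paper: after recentering polar coordinates at $(0,(2\mp\tau)/\delta)$ (the reflected vorticity center), the estimate comes down to a two-dimensional Laplace-type integral of $r^{-1}e^{-2r}$ over a strip a distance $a=(1\mp\tau)/\delta$ from the origin, with $\mathrm{Proposition~\ref{prop:sign of Uy}}$ supplying the uniform pointwise bound $|\partial^\alpha U(y)|\lesssim |y|^{-1/2}e^{-|y|}$ that extends the computation from $L^2$ to $H^{k'}$. The only presentational difference is the order of integration --- you integrate the radial variable first and then apply Laplace's method in the angle at $\phi=\pi/2$, whereas the paper integrates the angle first, reading off the arc length $\pi-2\beta_0(\rho)\eqsim(\delta\rho-(1\mp\tau))^{1/2}$ of the cross-section and then integrating radially --- but these are the same Fubini computation written in two orders and give identical leading behavior.
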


\begin{proof}
We shall focus on $\p_{x_2} U(\placeholder,  \frac 2\delta -\placeholder, \tau)$ as the others can be handled similarly. Consider the following subset $S$ of $\slab_\delta$
\begin{equation} \label{E:S}
S = \left\{ x\in \slab_\delta :  x_1^2 + (\tfrac {2-\tau}\delta - x_2)^2 < (\tfrac {3-\tau}\delta)^2 \right\}. 
\end{equation}
Let $(\rho, \beta)$ be the polar coordinates of $(x_1, \frac {2-\tau}\delta - x_2)$ so that 
\[
S = \{ (\rho, \beta) : \rho \in (\tfrac {1-\tau}\delta, \tfrac {3-\tau}\delta), \; \beta \in \left(\beta_0(\rho), \pi - \beta_0(\rho) \right) \}, \quad \beta_0(\rho) =  \sin^{-1}\left( \tfrac {1-\tau}{\delta \rho}\right). 
\] 
Since $|\tau| \le \frac 13$, we have $\sin \beta \eqsim 1$ in $S$, and 
\begin{equation} \label{E:beta}
\frac \pi2 - \beta_0 (\rho) = \sin^{-1} \left(1- \left(\tfrac {1-\tau}{\delta \rho}\right)^2\right)^{\frac 12} 
\eqsim (\delta \rho -1+\tau)^{\frac 12} \qquad \textrm{for all }  \rho \in (\tfrac {1-\tau}\delta, \tfrac {3-\tau}\delta).  
\end{equation}
Along with Proposition \ref{prop:sign of Uy}, this implies  
\[\begin{split}
I + II & :=  \left( \int_S + \int_{\slab_\delta\backslash S} \right) \left(x_1^2 + \left(\tfrac {2-\tau}\delta -x_2\right)^2\right)^{-\frac 12} e^{-2\left(x_1^2 + (\frac {2-\tau}\delta -x_2)^2\right)^{\frac 12}} \, \diff x \\
& \gtrsim  \left|\p_{x_2} U(\placeholder, \tfrac 2\delta -\placeholder, \tau) \right|_{H^{k_0+1} (\slab_\delta)}^2 
\gtrsim \left|\p_{x_2} U(\placeholder, \tfrac 2\delta -\placeholder, \tau)\right|_{L^2 (\slab_\delta)}^2 \gtrsim I. 
\end{split}\]
Again, it follows from Proposition \ref{prop:sign of Uy} and \eqref{E:beta} that
\[
I \eqsim \int_{\frac {1-\tau}\delta}^{\frac {3-\tau}\delta} \int_{\beta_0(\rho)}^{\pi-\beta_0(\rho)} e^{-2\rho} \,  \diff \beta \, \diff \rho \eqsim  \int_0^{\frac 2\delta} (\delta \rho')^{\frac 12} e^{-\frac {2(1-\tau)}\delta -2 \rho'} \, \diff \rho' \eqsim   \delta^{\frac 12} e^{-\frac {2(1-\tau)}\delta},
\]
while 
\[
II \lesssim \int_{|x| \ge \frac {3-\tau}\delta} |x|^{-1} e^{-2|x|} \, \diff x \lesssim \int_{\frac {3-\tau}\delta}^\infty e^{-2\rho} \, \diff \rho \eqsim e^{-\frac {2(3-\tau)}\delta}.
\]
This completes the proof of the corollary.
\end{proof}
In order to estimate the boundary correction operator defined in \eqref{definition bc}, we will need the following auxiliary lemma.  

\begin{lemma} \label{L:AuxBC}
Suppose $k \ge 2$ is an integer, $|\tau| \le \frac 13$, and $h\in C^k (\R^2, \R)$ satisfies 
\[
|\p^j h (x)|  \lesssim (1+|x - \tfrac \tau\delta e_2|)^{-\frac 12} e^{-|x- \frac \tau\delta e_2|} \quad \textrm{for } 0 \leq j \leq k,
\]
and 
\[
|\p^j (1-\Delta) h(x)| \lesssim (1+|x - \tfrac \tau\delta e_2|)^{-1} e^{-2|x- \frac \tau\delta e_2|} \quad  \textrm{for }  0 \leq j \leq k-2.
\]
Then 
\[
v(x_1, x_2)  := \left(h|_{\p \slab_\delta} \right)_\bc (x_1, x_2) -  h\left(x_1, \tfrac 2\delta -x_2\right) - h\left(x_1, -\tfrac 2\delta -x_2\right)
\]
satisfies 
\[
|v|_{H^k (\slab_\delta)}  \lesssim \delta^{\frac 34} e^{-\frac {2(1-|\tau|)}\delta}.
\]
\end{lemma}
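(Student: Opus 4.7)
The plan is to recognize $v$ as the solution of an inhomogeneous Dirichlet problem for the uniformly coercive operator $1 - \Delta$ on $\slab_\delta$, and then estimate the interior source and the boundary data via the sharp decay of $h$ and $(1-\Delta)h$. First, write $v = (h|_{\p\slab_\delta})_\bc - w$ with $w(x) := h(x_1, \tfrac{2}{\delta} - x_2) + h(x_1, -\tfrac{2}{\delta} - x_2)$. Since vertical reflection commutes with the Laplacian and $(1-\Delta)(h|_{\p\slab_\delta})_\bc = 0$ by definition of $\bc$,
\[
(1-\Delta) v = -\bigl[(1-\Delta)h\bigr]\bigl(x_1, \tfrac{2}{\delta} - x_2\bigr) - \bigl[(1-\Delta)h\bigr]\bigl(x_1, -\tfrac{2}{\delta} - x_2\bigr) \quad \text{in } \slab_\delta,
\]
and on each boundary component the ``near'' summand of $w$ cancels the trace of $(h|_{\p\slab_\delta})_\bc$, leaving $v|_{x_2 = \pm 1/\delta} = -h(\placeholder, \mp 3/\delta)$. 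A Fourier synthesis in $x_1$ in the spirit of \eqref{bc fourier formula} furnishes the $\delta$-uniform elliptic estimate
\[
|v|_{H^k(\slab_\delta)} \lesssim \bigl|(1-\Delta)v\bigr|_{H^{k-2}(\slab_\delta)} + |v|_{H^{k-1/2}(\p\slab_\delta)},
\]
so it suffices to bound the two data.

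The crux is the interior source. Concentrating on $F_+ := [(1-\Delta)h](\placeholder, \tfrac{2}{\delta} - \placeholder_2)$ (the other term being identical under $\tau \mapsto -\tau$), I change to the variable $y = (x_1, \tfrac{2-\tau}{\delta} - x_2)$, which transforms $\slab_\delta$ into the horizontal strip $y_2 \in [\tfrac{1-\tau}{\delta}, \tfrac{3-\tau}{\delta}]$ and turns the hypothesis into $|\p^\alpha F_+| \lesssim \langle y\rangle^{-1} e^{-2|y|}$ for $|\alpha| \leq k-2$. In polar coordinates $(\rho, \beta)$ for $y$, the image lies in $\rho \geq \tfrac{1-\tau}{\delta}$ and, exactly as in \eqref{E:beta} of Corollary~\ref{C:Unorms}, the angular slice at radius $\rho$ has width $\sim (\delta\rho - (1-\tau))^{1/2}$ near $\rho = \tfrac{1-\tau}{\delta}$ and $O(1)$ otherwise. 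Substituting $\rho = \tfrac{1-\tau}{\delta} + \sigma$, extracting $e^{-4(1-\tau)/\delta}$, and using $\rho^{-1} \lesssim \delta$ on the effective range gives
\[
|F_+|_{H^{k-2}(\slab_\delta)}^2 \lesssim \delta^{3/2} e^{-\frac{4(1-\tau)}{\delta}} \int_0^\infty \sigma^{1/2} e^{-4\sigma}\, d\sigma \lesssim \delta^{3/2} e^{-\frac{4(1-\tau)}{\delta}},
\]
and combining the two reflections yields $|(1-\Delta)v|_{H^{k-2}(\slab_\delta)} \lesssim \delta^{3/4} e^{-2(1-|\tau|)/\delta}$, which is exactly the target rate.

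For the boundary data, $h(\placeholder, \pm 3/\delta)$ is sampled at distance at least $\tfrac{3 - |\tau|}{\delta}$ from the center of decay of $h$; a direct Gaussian computation in $x_1$ gives $|v|_{H^{k-1/2}(\p\slab_\delta)} \lesssim \delta^{1/4} e^{-(3-|\tau|)/\delta}$. Since $\tfrac{3 - |\tau|}{\delta} - \tfrac{2(1-|\tau|)}{\delta} = \tfrac{1 + |\tau|}{\delta} \geq \tfrac{1}{\delta}$, this contribution is exponentially smaller than the target and is harmlessly absorbed. Applying the elliptic inequality then proves the claim. The main obstacle is the polar bookkeeping for $F_+$: a naive $L^\infty$-times-volume bound would lose a factor of $\delta^{1/2}$, so it is essential to exploit both the sharpened $e^{-2r}$ decay of $(1-\Delta)h$ (rather than only $e^{-r}$ for $h$) and the fact that $\slab_\delta$, viewed through the reflected center, is a thin circular segment whose angular width vanishes like $(\delta\rho - (1-|\tau|))^{1/2}$ at its inner radius.
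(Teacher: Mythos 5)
Your argument reproduces the paper's proof in all essentials: recognize that $v$ solves the mixed boundary value problem $(1-\Delta)v = -\sum_\pm [(1-\Delta)h](\placeholder, \pm\tfrac{2}{\delta} - \placeholder)$ in $\slab_\delta$ with $v|_{x_2 = \pm 1/\delta} = -h(\placeholder, \mp\tfrac{3}{\delta})$, invoke the $\delta$-uniform elliptic estimate for $1-\Delta$, and then estimate the interior source by passing to polar coordinates centered at the reflected point $\tfrac{2-\tau}{\delta}e_2$ and exploiting the narrow angular aperture $\sim (\delta\rho - (1-\tau))^{1/2}$, exactly as in the paper's Corollary~\ref{C:Unorms} computation. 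Your rates $\delta^{3/4}e^{-2(1-|\tau|)/\delta}$ for the source and $\delta^{1/4}e^{-(3-|\tau|)/\delta}$ for the boundary trace, and the observation that the latter is exponentially subordinate, agree with the paper's.
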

Intuitively, this says that the boundary correction of $h$ is, to leading order, found by subtracting the reflections of $h$ over the top and bottom boundaries of the slab.  
\begin{proof}
From the definition of $\bc$ in \eqref{definition bc}, we see that $v$ satisfies 
\[ \begin{cases}
(1-\Delta) v (x_1, x_2) = -(1- \Delta) h\left(x_1, \frac 2\delta -x_2\right) - (1- \Delta)h \left(x_1, -\frac 2\delta -x_2\right) \qquad  \textrm{in } \slab_\delta, \\
v|_{x_2 = \pm \frac 1\delta} = - h(x_1, \mp \frac 3\delta).
\end{cases} \]
One can immediately deduce the energy estimate 
\[
|v|_{H^k (\slab_\delta)}  \lesssim \sum_\pm \left|(1-\Delta)h (\placeholder, \pm \tfrac 2\delta -\placeholder) \right|_{H^{k-2} (\slab_\delta)} + |h|_{H^{k-\frac 12} (\{ |x_2|= \frac 3\delta\})}. 
\]
An upper bound of the first term on the right-hand side above can be obtained much as in the proof of Corollary~\ref{C:Unorms}, and so we only provide a sketch and focus on the ``$+$'' case.  Let $S$ be given as in \eqref{E:S}, and split the slab $\slab_\delta = S \cup (\slab_\delta \setminus S)$.  From the properties assumed on $h$, we see that  
\begin{align*}
&\left|(1-\Delta)h (\placeholder, \tfrac 2\delta -\placeholder) \right|_{H^{k-2} (\slab_\delta)}^2 \lesssim  \left(\int_S + \int_{\slab_\delta\setminus S} \right) \left(x_1^2 + (\tfrac {2 - \tau}\delta -x_2)^2\right)^{-1} e^{-4\left(x_1^2 + (\frac {2 - \tau}\delta -x_2)^2\right)^{\frac 12}} \, \diff x   \\
 & \qquad \lesssim \int_{\frac {1-\tau}\delta}^{\frac {3-\tau}\delta} \int_{\beta_0(\rho)}^{\pi-\beta_0(\rho)} \rho^{-1} e^{-4\rho} \, \diff \beta \, \diff \rho + \int_{|x| \ge \frac {3-\tau}\delta} |x|^{-2} e^{-4|x|} \,  \diff x \\
 & \qquad \lesssim  \delta^{\frac 12} e^{-\frac {4(1- \tau)}\delta} \int_0^{\frac 2\delta} (\rho')^{\frac 12} (\frac {1-\tau}\delta + \rho')^{-1} e^{-4\rho'} {\diff} \rho' + \int_{\frac {3-\tau}\delta}^\infty \rho^{-1} e^{-4\rho} {\diff}\rho \lesssim  \delta^{\frac 32} e^{-\frac {4(1- \tau)}\delta}.   
\end{align*}
The $H^{k-{ \frac{1}{2}}}(\p \slab_{3/\delta})$ norm can be estimated by interpolating it between $H^k$ and $H^{k-1}$ and then appealing to the assumptions on $h$: 
\begin{equation} \label{E:exp2}  \begin{split}
 |h|_{H^{k-\frac 12} (\{ |x_2|= \frac 3\delta\})}^2 & \lesssim \left(\int_0^{\delta^{-\frac 12}} + \int_{\delta^{-\frac 12}}^\infty\right) \left(x_1^2 + \tfrac {(3- |\tau|)^2}{\delta^2}\right)^{-\frac 12} e^{-2\left(x_1^2 + \frac {(3- |\tau|)^2}{\delta^2}\right)^{\frac 12}} \,  \diff x_1 \\
 & \lesssim \delta^{\frac 12} e^{-\frac {2(3- |\tau|)}\delta} + \int_{\left(\frac {\delta + (3- |\tau|)^2}{\delta^2}\right)^{\frac 12}}^\infty x_1(s)^{-1} e^{-2s} \,  \diff s \lesssim  \delta^{\frac 12} e^{-\frac {2(3- |\tau|)}\delta},
\end{split}\end{equation}
where the substitution $x_1(s) = (s^2 - \tfrac {(3- |\tau|)^2}{\delta^2})^{\frac 12}$ was used to evaluate the integral on $[\delta^{-\frac 12}, \infty)$. Combining the above inequalities concludes the proof of the lemma. 
\end{proof}

Lemma~\ref{L:AuxBC} is mainly applied to $U_\bc$ and $\Uybc$ for $|\tau|\le \frac 13$. In fact, \eqref{eq:ground state} yields  
\[
(1-\Delta )\p_{x_2} U= \left(1-\gamma'(U)\right) \p_{x_2}U,
\]
and so the assumption that $\gamma'(0)=1$ together with Proposition~\ref{prop:sign of Uy} ensures that $U$ and $\p_{x_2}U$ satisfy the hypotheses of Lemma \ref{L:AuxBC}.  Therefore, in addition to Corollary \ref{C:Unorms} we obtain the following estimates, which will be essential to us later.

\begin{corollary} \label{C:Ubc}
For any $\tau \in [-\frac 13, \frac 13]$, $U(\tau)_\bc$ and $(\p_{x_2} U)(\tau)_\bc$ satisfy
\begin{align*}
& \left| U(\tau)_\bc - U(\placeholder, \tfrac 2\delta -\placeholder, \tau) - U(\placeholder, -\tfrac 2\delta -\placeholder, \tau) \right|_{H^{k_0+2} (\slab_\delta)}  \lesssim \delta^{\frac 34} e^{-\frac {2(1-|\tau|)}\delta}, \\
& \left| ( \p_{x_2} U)(\tau)_\bc - (\p_{x_2} U)(\placeholder, \tfrac 2\delta -\placeholder, \tau) - (\p_{x_2} U)(\placeholder, -\tfrac 2\delta -\placeholder, \tau)\right|_{H^{k_0+1} (\slab_\delta)}  \lesssim \delta^{\frac 34} e^{-\frac {2(1-|\tau|)}\delta},\\
& \left| U(\tau)_\bc \right|_{H^{k_0+2} (\slab_\delta)},~  \left| (\p_{x_2} U)(\tau)_{\bc} \right|_{H^{k_0+1} (\slab_\delta)} \eqsim \delta^{\frac 14} e^{-\frac {1 - |\tau|}\delta}.  
\end{align*}
\end{corollary}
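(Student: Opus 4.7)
\textbf{Proof proposal for Corollary~\ref{C:Ubc}.} The plan is to apply Lemma~\ref{L:AuxBC} to $h = U(\placeholder, \tau)$ and $h = \p_{x_2} U(\placeholder, \tau)$, both viewed as functions on $\R^2$ exponentially localized around the translated center $\tfrac{\tau}{\delta} e_2$, and then derive the third pair of estimates from the first two via the triangle inequality together with Corollary~\ref{C:Unorms}.

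For $h = U(\placeholder, \tau)$ with $k = k_0 + 2$: Proposition~\ref{prop:sign of Uy} (together with Remark~\ref{rem:sign of Uy tau}) gives the pointwise bound $|\p^j U(\placeholder, \tau)(x)| \lesssim (1+|x - \tfrac{\tau}{\delta} e_2|)^{-\frac12} e^{-|x - \frac{\tau}{\delta} e_2|}$ for $0 \le j \le k_0 + 2$, which is the first hypothesis. For the second, write $(1-\Delta)U = U - \gamma(U)$ using $\Delta U = \gamma(U)$; since $\gamma(0)=0$ and $\gamma'(0)=1$, one has $|U - \gamma(U)| \lesssim U^2$ pointwise, which upgrades the exponential decay rate from $e^{-r}$ to $e^{-2r}$ and supplies the factor $(1+|x - \frac{\tau}{\delta} e_2|)^{-1}$. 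Derivatives up to order $k_0$ of $(1-\Delta)U$ are controlled by the Faà di Bruno formula, using $\gamma \in C^{k_0}$ and the common exponential decay of $U$ and its derivatives. Lemma~\ref{L:AuxBC} then yields the first inequality.

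For $h = \p_{x_2} U(\placeholder, \tau)$ with $k = k_0 + 1$: Proposition~\ref{prop:sign of Uy} again controls derivatives of order up to $k_0+1$. To verify the second hypothesis, differentiate the ground state equation once in $x_2$ to obtain
\[
(1-\Delta)\p_{x_2} U = (1-\gamma'(U))\,\p_{x_2} U.
\]
By Remark~\ref{rem:rate of decay}, $|1 - \gamma'(U)| \lesssim r^{-\frac12} e^{-r}$, and combining with $|\p_{x_2} U| \lesssim r^{-\frac12} e^{-r}$ gives the required quadratic-exponential decay; higher derivatives up to order $k_0 - 1$ are handled analogously via the product and chain rules. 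Lemma~\ref{L:AuxBC} then yields the second inequality.

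Finally, for the norm estimates $|U(\tau)_\bc|_{H^{k_0+2}}$ and $|(\p_{x_2} U)(\tau)_\bc|_{H^{k_0+1}}$, use the triangle inequality together with the first two bounds and Corollary~\ref{C:Unorms}: the two reflections contribute $\eqsim \delta^{\frac14} e^{-(1\mp\tau)/\delta}$, whose sum is $\eqsim \delta^{\frac14} e^{-(1-|\tau|)/\delta}$, while the correction term is $\lesssim \delta^{\frac34} e^{-2(1-|\tau|)/\delta}$ and is negligible for small $\delta$. The upper bound $\lesssim$ follows from the triangle inequality; the matching lower bound $\gtrsim$ follows from the reverse triangle inequality, since the dominant reflection has norm comparable to the claimed size and the remaining terms are strictly smaller. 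The only delicate analytical input is the cancellation producing the $e^{-2r}$ decay rate of $(1-\Delta)$ applied to $U$ and $\p_{x_2}U$, which hinges on the normalization $\gamma'(0) = 1$; everything else reduces to the already-established Lemma~\ref{L:AuxBC} and Corollary~\ref{C:Unorms}.
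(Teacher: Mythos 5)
Your treatment of the first two inequalities matches the paper's: apply Lemma~\ref{L:AuxBC} to $h=U(\placeholder,\tau)$ and $h=\partial_{x_2}U(\placeholder,\tau)$, verifying the two hypotheses via the ground-state equation and the normalization $\gamma'(0)=1$, which produces the $e^{-2r}$ decay for $(1-\Delta)h$. That part is correct and essentially the paper's argument.

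The gap is in the two-sided norm estimate. Write $U(\tau)_\bc = R_+ + R_- + E$ with $R_\pm = U(\placeholder, \pm\tfrac{2}{\delta}-\placeholder,\tau)$, $|E|_{H^{k_0+2}}\lesssim \delta^{3/4}e^{-2(1-|\tau|)/\delta}$, and $|R_\pm|_{H^{k_0+2}}\eqsim\delta^{1/4}e^{-(1\mp\tau)/\delta}$. Your reverse triangle inequality gives $|R_++R_-|\geq \big||R_+|-|R_-|\big|$, but the right-hand side vanishes as $\tau\to 0$, since $|R_-|/|R_+|\eqsim e^{-2\tau/\delta}\to 1$. Thus the argument only produces a useful lower bound when $\tau\gtrsim\delta$, and fails precisely in the regime $|\tau|\lesssim\delta$ --- which is the relevant one, since the $\tau$ ultimately found in Theorem~\ref{main euler theorem} is $O(\delta^{-7/2}e^{-2/\delta})$, vastly smaller than $\delta$. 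To rescue the lower bound you need to rule out destructive cancellation between the two reflections. The paper does this by proving an \emph{almost-orthogonality} estimate,
\[
\left|\left(R_+, R_-\right)_{H^{k_0+2}(\slab_\delta)}\right| \ll \delta^{1/2}e^{-2(1-|\tau|)/\delta},
\]
via a careful splitting of the slab and the inequalities \eqref{E:Exp-1}--\eqref{E:Exp-2}. With this in hand, $|R_++R_-|^2 = |R_+|^2+|R_-|^2+2(R_+,R_-)\gtrsim|R_+|^2+|R_-|^2\gtrsim \delta^{1/2}e^{-2(1-|\tau|)/\delta}$, uniformly in $\tau\in[-\tfrac13,\tfrac13]$, which is what the $\eqsim$ in the statement requires. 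This step is not optional; you should supply it.
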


\begin{proof}
The first two inequalities follow directly from Proposition~\ref{prop:sign of Uy} and Lemma \ref{L:AuxBC}. To obtained the estimate on $U(\tau)_{bc}$ based on the first inequality and Corollary \ref{C:Unorms}, we only need to show the almost orthogonality 
\begin{equation} \label{E:InnerP-1}
\left| \left(  U(\placeholder, \tfrac 2\delta -\placeholder, \tau), U(\placeholder, -\tfrac 2\delta -\placeholder, \tau) \right)_{H^{k_0+2} (\slab_\delta)}\right| \ll \delta^{\frac 12} e^{-\frac {2(1 - |\tau|)}\delta}. 
\end{equation}
In fact, for 
\[
b_1, b_2 \in \R, \; |b_1| -1\in [\frac 13, 3], \; |b_1-b_2| \ge \frac 13,\quad x\in \slab_\delta, \; |x_1| \le \frac 4\delta,
\]
due to the convexity of $t \mapsto \jbracket{t}$, there exists $\sigma>0$ independent of  $b_1, b_2$, and small $\delta>0$ such that 
\begin{equation} \label{E:Exp-1} \begin{split}
\left|\tfrac {b_1}\delta e_2 -x \right| + \left|x- \tfrac {b_2}\delta e_2 \right| & \ge \left|x_2- \tfrac {b_2}\delta \right| + \left|\tfrac {b_1}\delta - x_2\right| \left( 1+ \left|\tfrac {b_1}\delta - x_2\right|^{-2} x_1^2\right)^{-\frac 12} \\
& \ge   \left|x_2- \tfrac {b_2}\delta\right| + \left|\tfrac {b_1}\delta - x_2\right| + \sigma \left|\tfrac {b_1}\delta - x_2\right|^{-1} x_1^2 \ge \tfrac {|b_1+b_2|}\delta + \sigma \delta x_1^2. 
\end{split}\end{equation}
It is also clear that 
\be \label{E:Exp-2}
1+ \left|\tfrac {b_1}\delta e_2 - x \right| \eqsim \delta^{-1}, \quad 1+\left|x-\tfrac {b_2}\delta e_2\right| \gtrsim 1+ \left|x_2 - \tfrac {b_2}\delta\right|. 
\ee
Applying these inequalities to $b_1 = 2-\tau$ and $b_2=-(2+\tau)$ we obtain 
\[\begin{split}
 & \int_{\slab_\delta} (1+|x-\tfrac {2-\tau}\delta e_2|)^{-\frac 12} (1+ |x+\tfrac { 2+ \tau}\delta e_2|)^{-\frac 12} e^{-|x+\frac {2+ \tau}\delta e_2| -|x-  \frac { 2 - \tau}\delta e_2|}  \, \diff x
\\
 & \qquad \lesssim  \delta \left( \int_{|x_1| \le \frac 4\delta} + \int_{|x_1|\ge \frac 4\delta} \right) \int_{-\frac 1\delta}^{\frac 1\delta} e^{-|x-\frac \tau\delta e_2| - | \frac { 2 - \tau}\delta e_2 - x|} \, \diff x_2\, \diff x_1\\
& \qquad \lesssim  \int_{|x_1| \le \frac 4\delta}  e^{-\frac {4-2|\tau|}\delta - \sigma \delta x_1^2} \, \diff s \, \diff x_1 + \int_{\frac 4\delta}^\infty e^{-2x_1} \, \diff x_1 \lesssim e^{-\frac 3\delta}.
\end{split}\]
Together with Proposition~\ref{prop:sign of Uy} it immediately implies \eqref{E:InnerP-1} and completes the proof of the corollary. 
\end{proof}

\subsection*{Estimating the nonlinearity} 
Finally, we give some estimates of the nonlinearities $F$ and $G$ occurring in the reformulated water wave problem \eqref{perturbed problem}.

\begin{lemma}
\label{equivalence lemma}   For \(\gamma\) as in Assumptions~\ref{assumption:A} and~\ref{assumption:B} and any integer $2\le k \le k_0$, there exists $\sigma \in (0, 1)$ depending only on $g$ and $\alpha$, such that the operators $F$ and $G$ given in \eqref{def F} and \eqref{def G} satisfy 
\begin{align*}
& F: (-\tfrac 13, \tfrac 13) \times H_{\mathrm{e}}^k (\slab_\delta) \times H_{\mathrm{e}}^{k} (\R) \to H_{\mathrm{e}}^{k-1} (\slab_\delta) \text{ is } C^{k_0-k+1} \text{ in } u, \Gammas, \; C^{k_0-k} \text{ in } \tau;\\
& G : \left(-\tfrac 13, \tfrac 13 \right) \times  H_{\mathrm{e}}^k (\slab_\delta) \times B_{\sigma} \left(H_{\mathrm{e}}^{k} (\R) \right) \to H_{\mathrm{e}}^{k'} (\R) \text{ is } C^\infty \text{ in } u, \Gammas, \; C^{k_0-k'+1} \text{ in } \tau,
\end{align*}
where $B_{\sigma} \left(H_{\mathrm{e}}^{k} (\R) \right)$ is the ball in $H_{\mathrm{e}}^{k} (\R)$ centered at $0$ with radius $\sigma$ and $k'=k-\frac 32$ if $k>2$ and $k'$ can be any number smaller than $k-\frac 32$ if $k=2$. 
Moreover, for any $\sigma_u \in (0, 1)$, $\sigma_\Gamma \in (0, \sigma)$, $\tau \in (-\frac 13, \frac 13)$, $u \in B_{\sigma_u} \left(H_{\mathrm{e}}^k (\slab_\delta)\right)$ and $\Gammas \in B_{\sigma_\Gamma} \left(H_{\mathrm{e}}^{k} (\R)\right)$, we have 
\[ \begin{aligned}
|D_u F |_{\mathcal{L}\left(  H_{\mathrm{e}}^k (\slab_\delta), H_{\mathrm{e}}^{k-2} (\slab_\delta) \right)} & \lesssim \sigma_u + \delta^{-1}\sigma_\Gamma + \delta^{\frac 14} e^{-\frac {1-|\tau|}\delta}, \\
 |D_{\Gammas} F |_{\mathcal{L}\left(  H_{\mathrm{e}}^k (\R), H_{\mathrm{e}}^{k-2} (\slab_\delta) \right)} & \lesssim  \delta^{-1},     \\
|F(\tau, 0, 0)|_{H_{\mathrm{e}}^{k-2} (\slab_\delta)} & \lesssim  \delta^{\frac 14}\left| \log \delta \right|^{\frac 12} e^{-\frac {2(1-|\tau|)}\delta},
\end{aligned}\]
and 
\[ \begin{aligned}
 |D_u G|_{\mathcal{L}\left(  H_{\mathrm{e}}^k (\slab_\delta), H_{\mathrm{e}}^{k-2} (\R) \right)} & \lesssim \delta^{\frac{1}{2} -k} \sigma_u + \delta^{\frac{3}{4} -k} e^{-\frac {1-\tau}\delta}, \\
 |D_{\Gammas} G |_{\mathcal{L}\left(  H_{\mathrm{e}}^k (\R), H_{\mathrm{e}}^{k-2} (\R) \right)} & \lesssim \delta^{\frac{1}{2}-k} (\sigma_u^2 + \delta^{\frac 12} e^{-\frac {2(1-\tau)}\delta}), \\
 |G(\tau, 0, 0)|_{H_{\mathrm{e}}^{k-2} (\R)} & \lesssim \delta^{1-k} e^{-\frac {2(1-\tau)}\delta}.
\end{aligned}\]
\end{lemma}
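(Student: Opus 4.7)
The strategy is to view $F$ and $G$ as compositions of smooth (in fact analytic, where applicable) maps involving $\gamma$, the conformal representation $\Gammas \mapsto \Gamma^\prime$, the operator $A(\Gammas)^{-1}$, the boundary-correction $U_\bc$, and the shifted profile $U(\placeholder,\tau)$. The regularity assertions then reduce to bookkeeping, while the norm bounds are obtained by isolating the leading cancellations and invoking Proposition~\ref{prop:sign of Uy} together with Corollaries~\ref{C:Unorms} and~\ref{C:Ubc}.

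For the regularity, first I would observe that $\Gammas \mapsto \Gamma^\prime(\delta\placeholder)$ is analytic from $H^k_{\mathrm{e}}(\R)$ into $H^{k-1/2}$ of $\slab_\delta$ by the explicit harmonic extension \eqref{E:HarmExt}, and the map $\Gammas \mapsto A(\Gammas)^{-1}$ is real-analytic on the ball $B_\sigma(H^k_{\mathrm{e}}(\R))$ by \eqref{eq:A-regularity} and a Neumann series. The composition with $\gamma \in C^{k_0}$ then yields $C^{k_0-k+1}$ dependence of $F$ on $(u,\Gammas)$ (the loss of one derivative comes from the Moser estimate on Nemytskii operators at $H^k$-regularity), and smooth (in fact $C^\infty$) dependence of $G$, which is built from the quadratic trace, $A(\Gammas)^{-1}$ and polynomial factors. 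The $\tau$-regularity is more subtle: each $\partial_\tau$ acts on $U(\placeholder,\tau)$ as $-\delta^{-1}\partial_{x_2}$, so one may differentiate $\tau \mapsto U(\placeholder,\tau)$ up to $k_0+2-k$ times while keeping values in $H^{k}_{\mathrm{e}}(\slab_\delta)$, yielding the claimed $C^{k_0-k}$ and $C^{k_0-k'+1}$ indices.

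For $F(\tau,0,0) = \gamma(U-U_\bc) - \gamma(U) + U_\bc$, I would Taylor expand in $U_\bc$:
\[
F(\tau,0,0) = \bigl(1 - \gamma^\prime(U)\bigr)U_\bc + \tfrac{1}{2}\gamma^{\prime\prime}(U - \theta U_\bc)\,U_\bc^2.
\]
Since $\gamma^\prime(0)=1$, Proposition~\ref{prop:sign of Uy} gives $|1-\gamma^\prime(U)| \lesssim r^{-1/2}e^{-r}$, while Corollary~\ref{C:Ubc} shows that $U_\bc$ is, up to an $H^{k_0+2}$-error of size $\delta^{3/4}e^{-2(1-|\tau|)/\delta}$, the sum of the two reflected copies of $U$. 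Thus $|F(\tau,0,0)|_{H^{k-2}}^2$ is controlled by an integral of $U^2 \cdot U(\placeholder,2/\delta-\placeholder,\tau)^2$ over $\slab_\delta$, which I would evaluate in polar coordinates centered at $\tfrac{\tau}{\delta}e_2$ using the convexity identity \eqref{E:Exp-1}. This integral saturates the exponent $4(1-|\tau|)/\delta$, and the $|\log\delta|$ factor arises from the logarithmic divergence of the remaining $\rho^{-1}\,d\rho$ integrand between scales $1$ and $(1-|\tau|)/\delta$; the quadratic remainder $U_\bc^2$ is strictly smaller by Corollary~\ref{C:Ubc}. For the derivative estimates of $F$, I would split
\[
D_u F \cdot v = \bigl(|1+\Gamma^\prime(\delta\placeholder)|^2 - 1\bigr)\gamma^\prime(u+U-U_\bc)\,v + \bigl(\gamma^\prime(u+U-U_\bc) - \gamma^\prime(U)\bigr)v,
\]
so that the three bounds $\delta^{-1}\sigma_\Gamma$, $\sigma_u$, and $\delta^{1/4}e^{-(1-|\tau|)/\delta}$ correspond respectively to the prefactor $|1+\Gamma^\prime|^2-1$ (whose $H^{k-1}$ norm picks up $\delta^{-1}$ from the dilation $\delta\placeholder$ via change of variables), the $u$-perturbation, and the $U_\bc$-perturbation controlled by Corollary~\ref{C:Ubc}. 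The bound on $D_{\Gammas}F$ arises from $D_{\Gammas}(|1+\Gamma^\prime(\delta\placeholder)|^2)$ alone and gives the $\delta^{-1}$.

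For $G$ the geometry is cleaner because the trace at $x_2 = 1/\delta$ is the only place the slab enters. The key observation is that at $x_2=1/\delta$ the boundary condition forces $U=U_\bc$, but $\partial_{x_2}(U-U_\bc)$ does not vanish; by Corollary~\ref{C:Ubc}, it equals, to leading order, $2\,\partial_{x_2}U(x_1,(1-\tau)/\delta)$ plus a piece of size $\delta^{3/4}e^{-(3-\tau)/\delta}$. Applying Proposition~\ref{prop:sign of Uy} gives $|\partial_{x_2}U(\placeholder,(1-\tau)/\delta)|_{L^\infty} \lesssim \delta^{1/2}e^{-(1-\tau)/\delta}$ and the $L^2$ analogue $\lesssim \delta^{1/4}e^{-(1-\tau)/\delta}$, hence after squaring, multiplying by $\delta^{-2}$, and applying the bounded operator $A(0)^{-1}$ I recover $|G(\tau,0,0)|_{H^{k-2}} \lesssim \delta^{1-k}e^{-2(1-\tau)/\delta}$; the $\delta^{-k}$ loss is the cost of passing from $L^2$ to $H^{k-2}$ on the oscillatory trace, which resolves on scale $\delta$. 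The derivative estimates $D_u G$ and $D_{\Gammas}G$ follow from differentiating the quotient and trace-multiplying with this leading order.

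The main obstacle is the sharp $|\log\delta|^{1/2}$ refinement in $|F(\tau,0,0)|$ and, relatedly, producing the correct $\delta$-powers in the derivative bounds for $G$. The first requires a careful polar-coordinate analysis around the vortex center and a logarithmic dissection of the integration region into near-center, mid-range, and boundary-layer pieces, analogous to the proof of Corollary~\ref{C:Unorms}; the second requires tracking exactly how many derivatives fall on the trace $\partial_{x_2}(U-U_\bc)$ versus the low-frequency multipliers $A(\Gammas)^{-1}$ and $(1+m(\Diff)\Gammas)^{-2}$, using that on the $x_1$-line every $\delta$-rescaled derivative costs a factor of $\delta^{-1}$.
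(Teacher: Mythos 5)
Your proposal follows essentially the same route as the paper: Taylor-expand $F(\tau,0,0)$ to isolate the cross term involving $(1-\gamma'(U))U_\bc \sim U\cdot U_\bc$ and estimate it in polar coordinates via the convexity identity \eqref{E:Exp-1} (the paper packages this as the double-integral representation leading to the bound on $|UU_\bc|_{H^{k-2}}$ plus $|U_\bc^2|_{H^{k-2}}$ via \eqref{E:F0}); decompose $D_uF$ into the $\Gammas$-contribution $|1+\Gamma'(\delta\placeholder)|^2 - 1$ and the $u$- and $U_\bc$-contributions, extracting $\delta^{-1}$ from the dilation by the scaling identity; use the trace identity $U=U_\bc$ on $\p\slab_\delta$ and Corollary~\ref{C:Ubc} to replace $\p_{x_2}(U-U_\bc)|_{x_2=1/\delta}$ by $2\p_{x_2}U(\placeholder,(1-\tau)/\delta)$ plus lower-order terms; and invoke the 1-d scaling for the $\delta$-powers in the $G$-estimates. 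Two small slips worth flagging: the error in replacing $\p_{x_2}(U-U_\bc)$ by its reflection sum is $\delta^{3/4}e^{-2(1-|\tau|)/\delta}$ (Corollary~\ref{C:Ubc}), not $\delta^{3/4}e^{-(3-\tau)/\delta}$ as you wrote (the latter exponent describes the subdominant trace of $\p_{x_2}U$ at $x_2=-3/\delta$); and the $|\log\delta|$ gain in $F(\tau,0,0)$ comes, after the boundary-layer change of variables, from a single 1-d integral $\int \delta(1+s)^{-1}\diff s$ as in \eqref{E:F0}, so the ``three-region dissection'' you anticipate is not needed. Neither affects the final bounds.
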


\begin{proof}
Verifying the smoothness of $F$ and $G$ is tedious but straightforward.  The argument is based on (i) standard regularity results on products in Sobolev spaces, properties of the harmonic extension, the trace theorem, and (ii) the $C^{k_0 -l'}$ smoothness of the mapping $u  \in H^l\mapsto \gamma \circ u \in H^{l'}$ for a given $\gamma \in C^{k_0}$, which holds for $l'\le l$ and $l> \frac n2 +1$ in $n$ dimensions. The limitation on the smoothness of $F$ and $G$ with respect to $\tau$ is only due to the $C^{k_0+2}$ dependence of $U_\bc$ in $\tau$. The small $\sigma>0$ is chosen such that the denominator in the definition of $G$ is bounded away from zero and $A(\Gammas)$ has a bounded inverse, which can be done independent of $|\tau|\le \frac 13$ and small $\delta>0$. We omit the details and focus on the quantitative estimates related to $F$ and $G$. In what follows, let $\Gamma= \Gamma_1 + \I \Gamma_2\in H_{\textrm{e}}^{k+1/2} (\slab_\delta)$ be the conformal mapping determined by $\Gammas$ through \eqref{E:conformal} and \eqref{def Gammas}.  Note that this involves just the harmonic extension \eqref{E:HarmExt} and harmonic conjugate operators. 

From the definition of $F$, 
\begin{align*}
F(\tau, 0, 0) & =\gamma(U - U_\bc) - \gamma(U) + U_\bc \\
&  = \int_0^1 \!\! \int_0^1 \gamma''( s_2 U - s_1s_2 U_\bc) (s_1 U_\bc  -U ) U_{\bc} \, \diff s_2 \, \diff s_1,
\end{align*}
which, along with Corollary~\ref{C:Ubc}, implies that
\be
\begin{split}
|F(\tau, 0, 0)|_{H_{\textrm{e}}^{k-2} (\slab_\delta)} \lesssim & |U U_{\bc}|_{H_{\textrm{e}}^{k-2} (\slab_\delta)} + |U_{\bc}^2|_{H_{\textrm{e}}^{k-2} (\slab_\delta)} \\
\lesssim & \left| \sum_{\pm} U(\placeholder, \pm \tfrac{2}{\delta} - \placeholder) U \right|_{H_{\textrm{e}}^{k-2} (\slab_\delta)} 
+ \delta^{\frac 12} e^{-\frac {2(1-|\tau|)}\delta}. 
\end{split}
\label{prelim F(tau,0,0) bound}
\ee
Without loss of generality, we only need to consider the ``+'' term in the summation. 
According to Assumption \ref{assumption:A} and Proposition \ref{prop:sign of Uy}, for any $0\le j\le k-2$ and $x \in \slab_\delta$, 
\[
\left| \p^j \left( U(x) U (x_1, \tfrac 2\delta -x_2) \right)\right| \lesssim (1+|x-\tfrac \tau\delta e_2|)^{-\frac 12} |\tfrac { 2- \tau}\delta e_2 - x|^{-\frac 12} e^{-(|x-\frac \tau\delta e_2| + | \frac { 2 - \tau}\delta e_2 - x|)}
\]
which can be estimated much as \eqref{E:InnerP-1}. Applying \eqref{E:Exp-1} and \eqref{E:Exp-2} to $b_1 = 2-\tau$ and $b_2=\tau$, we have 
\[\begin{split}
 &\int_{\slab_\delta} (1+|x-\tfrac \tau\delta e_2|)^{-1} |\tfrac { 2- \tau}\delta e_2 - x|^{-1} e^{-2(|x-\frac \tau\delta e_2| + | \frac { 2 - \tau}\delta e_2 - x|)}  \, \diff x
\\
& \qquad \lesssim  \left( \int_{|x_1| \le \frac 4\delta} + \int_{|x_1|\ge \frac 4\delta} \right)  \int_{-\frac 1\delta}^{\frac 1\delta} (1+|x-\tfrac \tau\delta e_2|)^{-1} |\tfrac { 2- \tau}\delta e_2 - x|^{-1} e^{-2(|x-\frac \tau\delta e_2| + | \frac { 2 - \tau}\delta e_2 - x|)} \, \diff x_2 \, \diff x_1\\
& \qquad \lesssim  \int_{|x_1| \le \frac 4\delta}  \int_0^{\frac {1+|\tau|}\delta} \delta (1+ s)^{-1} e^{-2(\frac {2(1-|\tau|)}\delta + \sigma \delta x_1^2)} \, \diff s \, \diff x_1 + \int_{|x|\ge \frac 4\delta} \delta^2 e^{-2|x|} \, \diff x,
\end{split}\]
and so we have that 
\be \label{E:F0}
\int_{\slab_\delta} \left(1+|x-\tfrac \tau\delta e_2| \right)^{-1} |\tfrac { 2- \tau}\delta e_2 - x|^{-1}| e^{-2(|x-\frac \tau\delta e_2| + | \frac { 2 - \tau}\delta e_2 - x|)} \, \diff x \lesssim  \delta^{\frac 12} \left| \log \delta \right| e^{-\frac {4(1-|\tau|)}\delta}.
\ee
This further implies 
\[
|U (\placeholder, \tfrac 2\delta -\placeholder) U |_{H^{k-2} (\slab_\delta)}^2 \lesssim  \delta^{\frac 12} | \log \delta| e^{-\frac {4(1-|\tau|)}\delta}
\]
which, with \eqref{prelim F(tau,0,0) bound}, furnishes the desired estimate of $F(\tau, 0, 0)$.

Next, observe that, for any $\tilde u \in H_{\mathrm{e}}^k(\slab_\delta)$ with $k \geq 2$,
\begin{equation}\label{eq:Du F estimate}
\begin{split}
& D_u F (\tau, u, \Gammas) \tilde u  = \left(|1+\Gamma^\prime(\delta \placeholder )|^2 \gamma' (u+U-U_{\textrm{bc}})- \gamma^\prime(U)\right) \tilde u \\
& \qquad = \left(\left(2 \Gamma_1' (\delta \placeholder)+ |\Gamma'(\delta \placeholder)|^2\right)  \gamma' (u+U-U_{\textrm{bc}}) + (u - U_{\bc}) \int_0^1 \gamma'' \left(U + s(u - U_{\bc}) \right) \, \diff s \right) \tilde u. 
\end{split}
\end{equation}
Now, for any $s$ we have the the scaling identity
\[ | f(\delta \placeholder) |_{\dot H^s(\slab_\delta)} = \delta^{s-1} | f |_{\dot H^s(\slab_1)},\]
and, for $k - \frac{1}{2} \geq \frac{3}{2}$ and $|\Gammas|_{H^k} < 1$, it holds that
\[ \left | 2 \Gamma_1^\prime(\delta \placeholder) + |\Gamma^\prime(\delta \placeholder) |^2 \right|_{H^{k-\frac{1}{2}}(\slab_\delta)} \lesssim \delta^{-1} |\Gammas|_{H^k(\mathbb{R})}.\]
Thus, the $H^{k-2} (\slab_\delta)$ norm of the last line of \eqref{eq:Du F estimate} has the upper bound 
\[
O\left( \delta^{-1} |\Gammas|_{H^{k}  (\R)} + |u|_{H^{k}  (\slab_\delta)} + |U_{\bc}|_{H^{k}  (\slab_\delta)}\right) |\tilde u|_{H^{k-2} (\slab_\delta)}.
\] 
Corollary \ref{C:Ubc} therefore gives the claimed estimate of $D_u F$. 

Regarding $D_{\Gammas} F$, we have for any $\Gammast \in H_{\mathrm{e}}^k(\mathbb{R})$ that 
\[
D_{\Gammas} F (\tau, u, \Gammas) \Gammast = 2 \left( \tilde \Gamma_1' (\delta \placeholder) + \Gamma' (\delta \placeholder) \cdot \tilde \Gamma' (\delta \placeholder) \right) \gamma (u+U-U_{\bc}), 
\]
where $\tilde \Gamma = \tilde \Gamma_1 + \I \tilde \Gamma_2$ is the complex holomorphic function determined by $\Gammast$.  The estimate on $D_{\Gammas} F$ follows from this expression and the scaling of the Sobolev norms explained above.  

From the definition of $G$, one can directly compute 
\be \label{E:G0}
G(\tau, 0, 0)
= \frac 1{2 \delta^2} (\p_{x_2} (U - U_{\bc})) \left(\tfrac \placeholder \delta, \tfrac 1\delta \right)^2. 
\ee
Recall the one-dimensional scaling property,
\[
|f\left(\delta^{-1} \placeholder \right)|_{\dot H^s (\R)} = \delta^{-s+\frac 12} |f|_{\dot H^s (\R)},
\]
which holds for all $s \in \mathbb{R}$.  
The term $(\p_{x_2} (U - U_{\bc})) (\placeholder, \tfrac 1\delta)$ can be estimated by approximating $\p_{x_2} U_{\bc}(\placeholder, \frac 1\delta)$ by $-\p_{x_2} U(\placeholder, \frac 1\delta)$. From Corollary \ref{C:Ubc} and the trace theorem, we then have, for any $k_0+\frac 12\ge k\ge 0$, 
\be \label{E:temp-a}
\left |\p_{x_2} (U - U_{\bc}) (\placeholder, \tfrac 1\delta)
-2(\p_{x_2} U) (\placeholder, \tfrac 1\delta) \right|_{H^k (\R)} \lesssim |(\p_{x_2} U) ( \placeholder, -\tfrac 3\delta)|_{H^k (\R)} + \delta^{\frac 34} e^{-\frac {2(1-|\tau|)}\delta}.
\ee
Using Proposition \ref{prop:sign of Uy} and the change of variables $x_1 (\rho) = (\rho^2 - (\frac {1-\tau}\delta)^2)^{\frac 12}$ , we can estimate $(\p_{x_2} U) (\placeholder, \frac 1\delta)$ while 
the terms on the right hand-side are obviously much smaller,
\be \label{E:temp-0} \begin{split}
\left |(\p_{x_2} U) ( \placeholder, \tfrac 1\delta) \right|_{H^k (\R)}^2 \lesssim & \left(\int_0^{\delta^{-\frac 12}} + \int_{\delta^{-\frac 12}}^\infty\right) \left(x_1^2 + (\tfrac {1-\tau}\delta)^2 \right)^{-\frac 12} e^{-2(x_1^2 + (\frac {1-\tau} \delta)^2)^{\frac 12}} \, \diff x_1\\
\lesssim & \delta^{\frac 12} e^{-\frac {2(1-\tau)}\delta} + \int_{\frac {((1-\tau)^2 +\delta)^{\frac 12}}\delta }^\infty \frac {\diff \rho} {x_1(\rho) e^{2\rho}}\lesssim \delta^{\frac 12} e^{-\frac {2(1-\tau)}\delta}.  
\end{split} \ee
This implies that
\be \label{E:temp-1}
\left|(\p_{x_2} (U - U_{\bc})) (\placeholder, \tfrac 1\delta) \right|_{H^k (\R)} \lesssim \delta^{\frac 14} e^{-\frac {1-\tau}\delta}.
\ee
We therefore obtain the estimate on $G(\tau, 0, 0)$ from the scaling property as 
\[
|G(\tau,0, 0)|_{H^{k-2} (\R)} \lesssim \delta^{\frac 12 -(k-2)-2} \left| \left((\p_{x_2} (U - U_{\bc})) (\placeholder, \tfrac 1\delta)\right)^2 \right|_{H^{k-2} (\R)} \lesssim \delta^{1-k} e^{-\frac {2(1-\tau)}\delta}.
\]  

Consider next the bound on $D_u G$.  It is easy to see from the definition of $G$ that,
\[
D_u G(\tau, u, \Gammas) \tilde u= \frac{1}{\delta^{2}} A(\Gammas)^{-1} \left[ \frac{(\p_{x_2} (u + U - U_\mathrm{bc})\p_{x_2} \tilde u)(\frac{\placeholder} \delta,\frac{1}{\delta}) }{(1 +|\Diff| \coth{(2|\Diff|)} \Gammas)^2 + {\Gammas^\prime}^2}    \right]. 
\]
By the trace theorem and \eqref{E:temp-1},
we have 
\begin{align*}
& \left| \left(\p_{x_2} (u + U - U_\mathrm{bc})\p_{x_2} \tilde u \right)({\placeholder},\tfrac{1}{\delta})\right|_{H^{k-2} (\R)} \\
& \qquad 
\lesssim  |\p_{x_2} (u + U - U_\mathrm{bc})({\placeholder},\tfrac{1}{\delta})|_{H^{k-\frac32} (\R)} |\p_{x_2} \tilde u ({\placeholder},\tfrac{1}{\delta})|_{H^{k-\frac32} (\R)} \lesssim \left(\sigma_u + \delta^{\frac 14} e^{-\frac {1-\tau}\delta} \right) |\tilde u|_{H^k (\slab_\delta)}.
\end{align*}
The desired bound on $D_u G$ then follows from the scaling property.

 Finally, for any $\Gammast \in H_{\mathrm{e}}^k(\mathbb{R})$, 
\begin{align*}
2\delta^2 D_{\Gammas} G(\tau, u, \Gammas) \Gammast = & \left( D_{\Gammas} (A(\Gammas)^{-1}) \Gammast \right) \Big[ \frac{(\p_{x_2} (u + U - U_\mathrm{bc}))(\frac{\placeholder} \delta,\frac{1}{\delta})^2 }{(1 +|\Diff| \coth{(2|\Diff|)} \Gammas)^2 + {\Gammas^\prime}^2}\Big] \\
&- 2A(\Gammas)^{-1}\Big[ \frac{(\p_{x_2} (u + U - U_\mathrm{bc}))(\frac{\placeholder} \delta,\frac{1}{\delta})^2 }{\left( (1 +|\Diff| \coth{(2|\Diff|)} \Gammas)^2 + {\Gammas^\prime}^2\right)^2}  \\
& \times \left(  (1 +|\Diff| \coth{(2|\Diff|)} \Gammas)|\Diff| \coth{(2|\Diff|)}  \Gammast + \Gammas' \Gammast '\right)\Big].
\end{align*}
Since $|u|_{H^k (\slab_\delta)} < \sigma_u$ and $|\Gammas|_{H^k (\R)} < \sigma_\Gamma< \sigma<1$ with $k \ge 2$, straightforwardly we obtain 
\begin{align*}
|D_{\Gammas} G(\tau, u, \Gammas) \Gammast|_{H^{k-2} (\R)} \lesssim & \delta^{-2} |(\p_{x_2} (u + U - U_\mathrm{bc}))(\tfrac{\placeholder} \delta,\tfrac{1}{\delta})^2|_{H^{k-2}(\R)}  |\tilde \Gammas|_{H^k (\R)} \\
\lesssim & \delta^{\frac 12-k} \left(|u|_{H^{k}(\slab_\delta)}^2 + |(\p_{x_2} (U - U_\bc))(\placeholder, \tfrac 1\delta)|_{H^{k-1}(\R)}^2 \right)  |\Gammast|_{H^k (\R)},
\end{align*}
where the scaling property and the trace theorem have been used. The estimate on $D_{\Gammas} G$ now follows immediately from \eqref{E:temp-1}. 
\end{proof}

One notices that $|D_{\Gammas} F(\tau, u, \Gammas)|$ is not small no matter how small $u$ and $\Gammas$ are. Fortunately this is an ``off diagonal term" in the linearization, which will be handled by a simple rescaling argument in Section \ref{proof main theorem sec}.

\section{Spectral properties}\label{sec:Spec}
Having the necessary estimates on $U$ and the boundary correction operator $\bc$ now in hand, we 
next consider the linear operator  
\[
 L_\tau =  -\Delta + \gamma^\prime(U(\tau)) \colon X_\delta^2 \to X_\delta^0,
 \] 
in the elliptic equation \eqref{perturbed u eq}. 
Recall that the Dirichlet boundary conditions on $\partial \slab_\delta$ are encoded in the definition of the space $X_\delta^2$, and that we usually suppress the dependence on the translation parameter \(\tau\) in the notation for \(U = U(\placeholder, \tau) = U(\tau)(\placeholder)\).  

The inherent difficulty here is that equation \eqref{eq:ground state} implies
\[
\Delta \partial_{x_2} U = \gamma^\prime(U) \partial_{x_2} U,
\] 
so that $\partial_{x_2} U$ is in the kernel of $-\Delta+\gamma^\prime(U)$ viewed as an operator with domain  $H_{\textrm{e}}^2(\mathbb{R}^2)$.  Working in the strip $\slab_\delta$ breaks the vertical translation symmetry and  eliminates this kernel direction.  It is therefore expected that $L_\tau$ will be invertible from $X_\delta^2 \to X_\delta^0$, and, indeed, this is proved in Lemma~\ref{invertibility L on U_0 perp}.  However, as $\delta \searrow 0$, heuristically the strip approximates $\mathbb{R}^2$, and so we cannot hope to obtain bounds for $L_\tau^{-1} \colon X_\delta^0 \to X_\delta^2$ that are \emph{uniform} in $\delta$.  Another way to see this is to note that 
\[ L_\tau \partial_{x_2} U = 0 \quad \textrm{in } \slab_\delta, \qquad \partial_{x_2} U  \eqsim \delta^{\frac{1}{2}} \e^{-\frac{1- |\tau|}{\delta}} \quad \textrm{on } \partial\slab_\delta,\] 
as \(U\) and its derivatives decay exponentially. Thus, \(L_\tau\) is nearly degenerate in the direction close to \(\partial_{x_2} U\). 

In order to proceed, it is therefore necessary to have detailed information about the behavior of $L_\tau$ as $\delta \searrow 0$.  We will prove that there is a positive (simple) eigenvalue $l=l(\delta, \tau)$ that is exponentially small in $\delta$ for fixed $\tau$, and whose eigenfunction $U_0$ approaches $\partial_{x_2} U$ as $\delta \searrow 0$.  In the orthogonal complement of $U_0$, the inverse of $L_\tau$ is bounded uniformly in $\delta$.  In the next section, we will make use of this fact to perform a Lyapunov--Schmidt type reduction to \eqref{perturbed u eq}, first solving the problem on a codimension $1$ subspace where $L_\tau$ is well-behaved, and then studying the reduced equation on the near-degenerate direction.

\subsection{An approximate eigenfunction}  \label{approximate eigen section}

As a preparation for proving the existence of $l$ and $U_0$, we first study the function 
\be
U_2(\placeholder,\tau) = \partial_{x_2} U(\placeholder,\tau) - (\partial_{x_2} U)(\placeholder,\tau)_\textrm{bc} \in X_\delta^2, \label{def U2} 
\ee
which results from taking $\partial_{x_2} U$ and perturbing it slightly so that the homogeneous boundary condition on $\partial \slab_\delta$ is satisfied, see Figure~\ref{U Ubc figure}.  In what follows, the dependence of $U_2$ on $\tau$ will be suppressed when there is no risk of confusion.   While $U_2$ is not likely to be an eigenfunction itself, we will see that it does help in identifying the asymptotically degenerate direction.  Observe that it solves the elliptic problem
\be \label{E:LtauU2}
\left\{ \begin{array}{rll} \left(-\Delta + \gamma^\prime(U) \right)  U_2  & = (1- \gamma^\prime(U))\Uybc & \textrm{in } \slab_\delta \\
U_2 & = 0 & \textrm{on } \partial \slab_\delta,\end{array} \right.
\ee
as \( (-\Delta + \gamma'(U)) \partial_{x_2} U =0\) 
and $\Delta \Uybc = \Uybc$ by the definition of the boundary correction operator. Recall also that $\gamma \in C^{k_0}$ according to Assumption \ref{assumption:A}.

\begin{lemma} \label{L:U2}
For $|\tau| \le \frac 13$, we have $|U_2|_{H^{k_0+1} (\slab_\delta)} \eqsim 1$ and 
\begin{align*}
|L_\tau U_2|_{H^{k_0-1} (\slab_\delta)}\lesssim \delta^{\frac 14} \left|\log{\delta}\right|^{\frac 12} e^{-\frac {2(1-|\tau|)}\delta}, \qquad  
0< (U_2, L_\tau U_2)_{L^2(\slab_\delta)} \eqsim \delta^{\frac 12} e^{-\frac{2(1-|\tau|)}{\delta}}.
\end{align*}
\end{lemma}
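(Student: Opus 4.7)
The plan rests on the elliptic equation \eqref{E:LtauU2} satisfied by $U_2$, together with the pointwise asymptotics of $U$ from Proposition~\ref{prop:sign of Uy} and the reflection approximation for $(\partial_{x_2}U)_\bc$ in Corollary~\ref{C:Ubc}. For the norm estimate, I would apply the triangle inequality to $U_2 = \partial_{x_2} U - (\partial_{x_2} U)_\bc$: by Proposition~\ref{prop:sign of Uy}, $|\partial_{x_2} U|_{H^{k_0+1}(\slab_\delta)} \eqsim 1$, since $\partial_{x_2} U$ lies in $H^{k_0+1}(\R^2)$ with an $O(1)$ norm and essentially all of its mass inside $\slab_\delta$ for $|\tau| \le \tfrac 13$ and small $\delta$, while Corollary~\ref{C:Ubc} gives $|(\partial_{x_2}U)_\bc|_{H^{k_0+1}} \eqsim \delta^{1/4} e^{-(1-|\tau|)/\delta}$, which is exponentially small.

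For the second bound, I would use \eqref{E:LtauU2} to write $L_\tau U_2 = (1-\gamma'(U))(\partial_{x_2}U)_\bc$, and then replace $(\partial_{x_2}U)_\bc$ by the two reflections $(\partial_{x_2}U)(\placeholder, \pm 2/\delta - \placeholder, \tau)$ via Corollary~\ref{C:Ubc}, at an $H^{k_0+1}$-cost of $\delta^{3/4} e^{-2(1-|\tau|)/\delta}$. Combining the pointwise bound $|1-\gamma'(U)| \lesssim (1+r_1)^{-1/2} e^{-r_1}$ from Remark~\ref{rem:rate of decay}, with $r_1$ the distance from the center $(0,\tau/\delta)$, with the exponential decay of each reflection around its own center at distance $2(1\mp\tau)/\delta$, the resulting product-integrals fit precisely the computation \eqref{E:F0} used in the proof of the $F(\tau,0,0)$ bound in Lemma~\ref{equivalence lemma}, yielding the claimed $\delta^{1/4}|\log \delta|^{1/2} e^{-2(1-|\tau|)/\delta}$.

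For the inner product I would integrate by parts. Starting from
\[
(U_2, L_\tau U_2)_{L^2(\slab_\delta)} = \int_{\slab_\delta} \left( |\nabla U_2|^2 + \gamma'(U) U_2^2 \right) dx,
\]
I would expand the square under $U_2 = \partial_{x_2} U - W$ with $W = (\partial_{x_2}U)_\bc$, and exploit the identities $\Delta \partial_{x_2} U = \gamma'(U)\partial_{x_2} U$ on $\R^2$, $\Delta W = W$ in $\slab_\delta$, and $W = \partial_{x_2} U$ on $\partial \slab_\delta$. Several applications of Green's identity collapse the bulk contributions and reduce the inner product to the clean formula
\[
(U_2, L_\tau U_2) = -\int_{\partial \slab_\delta} \partial_n U_2 \cdot \partial_{x_2} U\, dS + \int_{\slab_\delta} (\gamma'(U) - 1) W^2\, dx.
\]
Applying Corollary~\ref{C:Ubc} near each boundary component yields $\partial_n W \approx \mp \partial_{x_2}^2 U$ on the top/bottom, and hence $\partial_n U_2 \approx \pm 2\partial_{x_2}^2 U$ there, with signs arranged so that both contributions to $-\int \partial_n U_2 \cdot \partial_{x_2} U\, dS$ come out positive. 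Substituting the sharp asymptotics $\partial_{x_2} U \sim -\lambda (x_2 - \tau/\delta) r^{-3/2} e^{-r}$ and $\partial_{x_2}^2 U \sim \lambda (x_2 - \tau/\delta)^2 r^{-5/2} e^{-r}$ from Proposition~\ref{prop:sign of Uy} and Remark~\ref{rem:rate of decay}, and performing a one-dimensional Laplace integration in $x_1$, gives positive contributions of size $\delta^{1/2} e^{-2(1\mp\tau)/\delta}$ from the two boundaries; the closer one dominates, producing the claimed order $\delta^{1/2} e^{-2(1-|\tau|)/\delta}$. The volume correction is bounded by $|W|_{L^\infty(\slab_\delta)}^2 |1-\gamma'(U)|_{L^1(\slab_\delta)} \lesssim \delta\, e^{-2(1-|\tau|)/\delta}$, of strictly higher order.

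The main technical obstacle will be in this last step: every approximation (from Corollary~\ref{C:Ubc}, from the boundary pointwise asymptotics, and from the volume correction) must be verified to introduce errors of strictly higher order than $\delta^{1/2} e^{-2(1-|\tau|)/\delta}$, so that the delicate cancellation between $\partial_n \partial_{x_2} U$ and $\partial_n W$ produces both the factor $2$ and the definite positive sign of the leading order.
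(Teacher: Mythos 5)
Your proposal is correct, and for the first two estimates it follows essentially the same path as the paper: the norm estimate via the triangle inequality with Corollary~\ref{C:Ubc}, and the bound on $L_\tau U_2$ via \eqref{E:LtauU2}, Corollary~\ref{C:Ubc}, and the integral computation \eqref{E:F0}. For the inner product you take a genuinely different route in the final step. Your Green's-identity reduction
\[
(U_2, L_\tau U_2)_{L^2} = -\int_{\partial \slab_\delta} \partial_n U_2 \cdot \partial_{x_2} U\, \diff S + \int_{\slab_\delta} (\gamma'(U) - 1) (\partial_{x_2} U)_\bc^2\, \diff x
\]
is algebraically equivalent to the paper's identity (use $\partial_n = N_2 \partial_{x_2}$ and $(\partial_{x_2}U)_\bc = \partial_{x_2}U$ on $\partial\slab_\delta$ to see this), but what you do with the boundary term is different. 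The paper splits the integrand into the two pieces $W\partial_{x_2}W N_2$ and $-\partial_{x_2}^2 U\,\partial_{x_2}U\,N_2$ and converts each back to a volume integral---over $\slab_\delta$ in the first case using $(1-\Delta)W=0$, over $\slab_\delta^c$ in the second using $\Delta \partial_{x_2}U = \gamma'(U)\partial_{x_2}U$---so that the sizes and signs drop out immediately from Corollaries~\ref{C:Unorms} and~\ref{C:Ubc}. You instead keep the boundary integral, approximate $\partial_n W$ by the normal derivative of the dominant reflection via Corollary~\ref{C:Ubc}, and then run a one-dimensional Laplace estimate on $\partial\slab_\delta$ using the sharp radial asymptotics of $\partial_{x_2}U$ and $\partial_{x_2}^2 U$ from Proposition~\ref{prop:sign of Uy} and Remark~\ref{rem:rate of decay}. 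Both are valid; the paper's transformation is arguably cleaner because the required volume integrals are exactly the quantities already estimated, while your direct boundary computation makes the source of the factor $\delta^{1/2} e^{-2(1-|\tau|)/\delta}$ and the positivity more transparent. One small remark: your volume-correction bound $\lesssim \delta\, e^{-2(1-|\tau|)/\delta}$ is cruder than the paper's $O(e^{-3(1-|\tau|)/\delta})$---the latter exploits that $1-\gamma'(U)$ is concentrated near the center while $(\partial_{x_2}U)_\bc$ is concentrated near $\partial\slab_\delta$---but since $\delta \ll \delta^{1/2}$ your bound still suffices to show the correction is subleading.
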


\begin{proof}
Simply from the exponential decay of $U(x)$ as $|x| \to \infty$, Proposition \ref{prop:sign of Uy}, Corollary~\ref{C:Ubc}, and its definition, it is clear that $U_2 = O(1)$ in $H^{k_0+1}$ for $|\tau| \leq \tfrac{1}{3}$. On the other hand, from \eqref{E:LtauU2} and Corollary~\ref{C:Ubc}, 
we obtain the estimate
\begin{equation} \label{E:LtauU2-1}
\left| L_\tau U_2 - \left(1-\gamma'(U) \right)\sum_\pm \p_{x_2} U (\placeholder,\pm  \tfrac 2\delta -\placeholder) 
\right|_{H^{k_0-1} (\slab_\delta)} \lesssim \delta^{\frac 34} e^{-\tfrac {2(1-|\tau|)}\delta}. 
\end{equation} 
Without loss of generality, we only need to consider the ``+'' case. 
According to Assumption~\ref{assumption:A} and 
Proposition~\ref{prop:sign of Uy}, for any $0\le k\le k_0 -1$ and $(x_1, x_2) \in \slab_\delta$, 
\[
\left| \p^k \left( \left(1-\gamma'(U(x)) \right)\p_{x_2} U (x_1, \tfrac 2\delta -x_2)\right)\right| \lesssim \frac{e^{-\left(|x-\tfrac \tau\delta e_2| + | \tfrac { 2 - \tau}\delta e_2 - x|\right)}
} { \left(1+|x-\tfrac \tau\delta e_2| \right)^{\frac 12} \left|\tfrac { 2- \tau}\delta e_2 - x \right|^{\frac 12}}
\]
and thus the claimed bound on $L_\tau U_2$ follows from \eqref{E:F0}.

Likewise, using the above estimate on $L_\tau U_2$ in conjunction with Corollary~\ref{C:Ubc}, \eqref{def U2}, and \eqref{E:LtauU2}, one can estimate 
\begin{align*}
(U_2, \, L_\tau U_2)_{L^2(\slab_\delta)} &= \left(\partial_{x_2} U - (\partial_{x_2} U)_\mathrm{bc} , \, L_\tau 
U_2 \right)_{L^2(\slab_\delta)}\\ 
&=  \left( (1- \gamma^\prime(U)) \partial_{x_2} U, \,  (\partial_{x_2} U)_\mathrm{bc}\right)_{L^2(\slab_\delta)} + O\big(e^{-\frac{3(1-|\tau|)}{\delta}}\big).
\end{align*}
We concentrate on the first term on the right-hand side, as it will clearly dominate as $\delta \searrow 0$.  Using the identities 
\[ (1- \gamma^\prime(U)) \partial_{x_2} U = (1- \Delta) \partial_{x_2} U, \qquad (1-\Delta) (\partial_{x_2} U)_\bc = 0,\]
and integrating by parts twice yields:
\begin{align*}
\int_{\slab_\delta} [(1- \gamma^\prime(U)) \partial_{x_2} U]   (\partial_{x_2} U)_\bc \, \diff x &= \int_{\slab_\delta} [(1- \Delta) \partial_{x_2} U]   (\partial_{x_2} U)_\mathrm{bc} \, \diff x \\
&=  \int_{\partial \slab_\delta}  \left(  - [\partial_{x_2}^2 U]   (\partial_{x_2} U)_\mathrm{bc} + \partial_{x_2} U \partial_{x_2} (\partial_{x_2} U)_\mathrm{bc} \right) N_2 \, \diff x_1 \\
& =   \int_{\partial \slab_\delta}  \left( (\partial_{x_2} U)_\mathrm{bc} \partial_{x_2} (\partial_{x_2} U)_\mathrm{bc} - [\partial_{x_2}^2 U]   \partial_{x_2} U \right) N_2 \, \diff x_1.
\end{align*}
The first of the boundary integrals we treat by integrating back to the interior domain \(\slab_\delta\) and using the definition of the boundary correction operator:
\begin{align*}
\int_{\partial \slab_\delta}  (\partial_{x_2} U)_\mathrm{bc} \partial_{x_2} (\partial_{x_2} U)_\mathrm{bc} N_2 \, \diff x_1 
 &= \int_{\slab_\delta} \left( |\nabla (\partial_{x_2} U)_\mathrm{bc} |^2 + (\partial_{x_2} U)_\mathrm{bc}^2 \right) \, \diff x  \eqsim \delta^{\frac 12} e^{-\frac {2(1 - |\tau|)}\delta},
 \end{align*}
where Corollary~\ref{C:Unorms} and~\ref{C:Ubc} are used in the above last step. The second integral we instead estimate by integrating into the \emph{outer} domain  $\slab_\delta^c$, where \(U\) and its derivatives are well defined and exponentially decaying in all radial directions. In analogy to above, we use the elliptic equation that  \( \partial_{x_2} U\) satisfies to find
\begin{align*}
-\int_{\partial \slab_\delta} (\partial_{x_2}^2 U) (   \partial_{x_2} U ) N_2 \, \diff x_1 &=  \int_{ \slab_\delta^c} \left( |\nabla (\partial_{x_2} U) |^2 + (\partial_{x_2} U) \Delta (\partial_{x_2} U) \right) \, \diff x\\
& = \int_{\slab_\delta^c} \left( |\nabla (\partial_{x_2} U) |^2 + \gamma^\prime(U) (\partial_{x_2} U)^2 \right) \, \diff x \\
& =  \int_{|x_2| \in (\frac 1\delta, \frac 3\delta)} \left( |\nabla (\partial_{x_2} U) |^2 +  (\partial_{x_2} U)^2 \right) \, \diff x + O\big(e^{-\frac{3(1-|\tau|)}{\delta}}\big)\\
&\eqsim 
 \delta^{\frac 12} e^{-\frac{2(1-|\tau|)}{\delta}},
\end{align*} 
where the last bound is from Corollary~\ref{C:Unorms}. Observe also that both boundary integrals are positive.
This implies the positivity of $(U_2, \, L_\tau U_2)_{L^2(\slab_\delta)}$ for $\delta$ small enough, and so the proof is complete. 
\end{proof} 

We wish to show that $L_\tau\colon X_\delta^2 \to X_\delta^0$ is well behaved as $\delta \searrow 0$ except in a one-dimensional near-degenerate direction that anticipates the kernel of $-\Delta + \gamma^\prime(U)$ on $\mathbb{R}^2$.  To be more precise, define the function spaces 
\[
X = H_{\textrm{e}}^2(\mathbb{R}^2), \qquad Y = L_{\textrm{e}}^2(\mathbb{R}^2).
\]
Then, under Assumption~\ref{assumption:B} we see that $-\Delta+ \gamma^\prime(U)\colon X \to Y$ has a one-dimensional kernel spanned by $\partial_{x_2} U$.  Note that here the even symmetry restriction {eliminates} the kernel direction generated by $\p_{x_1} U$.  Let $P = P(\tau)$ denote the orthogonal projection of $Y$ onto $\spn{\{\p_{x_2} U\}}$; abusing notation somewhat, we use the same symbol for the induced projection $X \to \spn{\{\p_{x_2} U\}}$. 

The following non-degeneracy result is a direct consequence of Assumption~\ref{assumption:B}. 
 
 \begin{lemma}[Non-degeneracy in $\mathbb{R}^2$] \label{nondegeneracy in R^2 lemma}
The operator  $-\Delta + \gamma^\prime(U) \colon (I-P)X \to (I-P) Y$
 is an isomorphism with bounds uniform in $|\tau| \le \tfrac{1}{3}$. 
 \end{lemma}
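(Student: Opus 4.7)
The plan is to combine the Fredholm alternative with the translation invariance of the whole-space problem. I would proceed as follows.

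First, I would observe that $L := -\Delta + \gamma'(U) \colon H^2(\mathbb{R}^2) \to L^2(\mathbb{R}^2)$ is formally self-adjoint (the potential $\gamma'(U)$ is real-valued and bounded) and, since $\gamma'(U) \to \gamma'(0) = 1$ at infinity, is a relatively compact perturbation of $-\Delta + 1$. Hence $L$ is Fredholm of index $0$. By Assumption~\ref{assumption:B}, its kernel on $H^2(\mathbb{R}^2)$ is $\spn\{\partial_{x_1}U,\partial_{x_2}U\}$. Restricting to functions that are even in $x_1$ eliminates the direction $\partial_{x_1}U$ (which is odd in $x_1$), so $L\colon X \to Y$ is Fredholm with one-dimensional kernel $\spn\{\partial_{x_2}U\}$ and closed range of codimension $1$. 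By self-adjointness this range equals the orthogonal complement $(I-P)Y$.

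Next, since $L$ commutes with the projection $P$ onto its kernel (its range is orthogonal to $\p_{x_2}U$), the restriction $L\colon (I-P)X \to (I-P)Y$ is a continuous bijection between Banach spaces, hence an isomorphism by the open mapping theorem. Call its bounded inverse $K_0$.

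For uniformity in $\tau \in [-\tfrac13,\tfrac13]$, I would invoke translation invariance. Writing $T_\tau f(x) = f(x - \tfrac{\tau}{\delta}e_2)$ for the (unitary) vertical translation, one has $U(\placeholder,\tau) = T_\tau U$ and hence
\[
-\Delta + \gamma'(U(\placeholder,\tau)) = T_\tau \circ \bigl(-\Delta + \gamma'(U)\bigr) \circ T_\tau^{-1}.
\]
Since $T_\tau$ preserves evenness in $x_1$, and since $P(\tau) = T_\tau P T_\tau^{-1}$ projects onto $\spn\{\p_{x_2}U(\placeholder,\tau)\}$, the operator on $(I-P(\tau))X \to (I-P(\tau))Y$ is unitarily conjugate to $L\colon (I-P)X \to (I-P)Y$. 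Its inverse is $T_\tau K_0 T_\tau^{-1}$, whose operator norm equals $\|K_0\|$ and is therefore independent of $\tau$.

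There is no real obstacle here: the argument is a packaging of (i) Fredholm theory for Schrödinger operators with potentials tending to a positive limit at infinity, (ii) the non-degeneracy hypothesis~\ref{assumption:B}, and (iii) exact translation invariance of $L$ on the whole space. The genuine difficulty — quantitative control of the analogous operator $L_\tau$ on the slab $\slab_\delta$ as $\delta\searrow 0$, where translation symmetry is broken and a near-degenerate direction emerges — is the subject of the subsequent analysis, not of this lemma.
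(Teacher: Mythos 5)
Your proof is correct. The paper gives no proof of this lemma, simply remarking that it is ``a direct consequence of Assumption~\ref{assumption:B}''; your argument is a correct and complete fleshing-out of the standard reasoning that the authors take for granted: relative compactness of the decaying potential perturbation gives Fredholm index $0$, evenness in $x_1$ kills the $\partial_{x_1}U$ kernel direction, self-adjointness identifies the range as $(I-P)Y$, and unitary conjugation by the translation $T_\tau$ yields $\tau$-independence of the inverse's operator norm.
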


Next, we establish an estimate for
$L_\tau \colon X_\delta^2 \to X_\delta^0$. Let \(\varrho \in C^\infty(\R, [0,1])\) be a smooth cut-off function with
\[
\varrho(r) = \left\{ \begin{array}{ll} 1 & \textrm{for }  r < 1/3 \\
 0 & \textrm{for } r > 1/2.\end{array} \right.
\]
Given a function \(h \colon \overline{\slab_\delta} \to \R\) we define its (odd) extension \(Eh \colon \mathbb{R}^2 \to \mathbb{R}\) by 
\begin{equation}\label{eq:extension}
(Eh)(x) := \left\{ \begin{array}{ll} h(x) & \qquad \textrm{for } x \in  \slab_\delta \\
 -h(x_1, \textstyle{\pm \frac{2}{\delta}} - x_2) \varrho((|x_2| - \textstyle{\frac{1}{\delta}})\delta) &\qquad \textrm{for } \pm x_2 \geq 1/\delta. 
\end{array} \right.
\end{equation}
Notice that
\be |h|_{L^2(\slab_\delta)} \leq |Eh|_{L^2(\mathbb{R}^2)} \leq 3|h|_{L^2(\slab_\delta)},\label{compare u and Eu} \ee
and hence  $h \in L^2(\slab_\delta)$ if and only if $Eh \in L^2(\mathbb{R}^2)$.  By a standard property of odd extensions, we have in fact that $h \in X_\delta^2$ if and only if $Eh \in X$.  

Now,  let 
\be \label{E:U2perp} 
U_2^\perp := \left\{ u \in X_\delta^0 : \left( u, \, U_2 \right)_{L^2(\slab_\delta)} = 0 \right\}.
\ee
For any $|\tau| \le \frac{1}{3}$ and $u \in U_2^\perp$, we see that $Eu \in Y$ and 
\begin{align*}
\int_{\R^2} (E u) \partial_{x_2} U \,\diff x &=  \int_{\slab_\delta^c} (E u)  \partial_{x_2} U \,\diff x + \int_{\slab_\delta} u  (U_2 + (\partial_{x_2} U)_\mathrm {bc} ) \,\diff x\\
&= -\sum_{\pm} \int_{\{\frac{1}{\delta} < \pm x_2 < \frac{3}{2\delta}\}} u(x_1, \textstyle{\pm \frac{2}{\delta}} - x_2) \varrho(\delta |x_2|- 1)  \partial_{x_2} U \,\diff x 
 \\
 & \qquad +  \int_{\slab_\delta} u  (\partial_{x_2} U)_\bc  \,\diff x.
\end{align*}
Using the $L^2$ bounds of $\p_{x_2} U$ and $\Uybc$ given in Corollary~\ref{C:Unorms} and~ \ref{C:Ubc},  we estimate that
\be 
\left|\int_{\mathbb{R}^2} (Eu) \partial_{x_2} U \, \diff x \right| \lesssim 
\delta^{\frac 14} e^{-\frac{1-|\tau|}{\delta}} |u|_{L^2(\slab_\delta)} \qquad \textrm{for all } u \in U_2^\perp,
\label{U2 near orthogonality} \ee
uniformly for $|\tau| \le \tfrac{1}{3}$ and all small values of \(\delta\).  In other words, the extension $Eu$ is nearly orthogonal to $\partial_{x_2} U$ in $L_{\mathrm{e}}^2(\mathbb{R}^2)$. 
Combining these observation leads to the bound
\be  \label{P estimate} 
| P Eu |_{L^2(\mathbb{R}^2)} = \frac{\left| \int_{\mathbb{R}^2} (Eu) \partial_{x_2} U \, \diff x \right|}{ |\partial_{x_2} U|_{L^2(\mathbb{R}^2)}} \lesssim 
\delta^{\frac 14} e^{-\frac{1-|\tau|}{\delta}}  |u|_{L^2(\slab_\delta)},
\ee 
which holds for all $u \in U_2^\perp$.

\begin{lemma}[Non-degeneracy in $\slab_\delta$] \label{non-degeneracy proposition} \hfill
\begin{enumerate}[label=\rm(\alph*)] 
\item   There exists $\delta_0 >0$ and $\lambda_0 > 0$ 
such that, for all $\delta \in (0,\delta_0)$ and $|\tau| \le \tfrac{1}{3}$,
\be \label{eq:nondegeneracy}
|L_\tau u|_{L^2(\slab_\delta)} \geq \lambda_0 |u|_{L^2(\slab_\delta)}, \qquad \textrm{for all } u \in X_\delta^2 \cap U_2^\perp.
\ee 
\item \label{non-degeneracy general condition} For every $\theta \in (0,1)$, there exists $\delta_0 = \delta_0(\theta)> 0$ and $\mu_0 = \mu_0(\theta) > 0$ such that, for all $\delta \in (0,\delta_0)$, $|\tau| \le  \tfrac{1}{3}$, and $u \in X_\delta^2$ satisfying
\be |P Eu|_{L^2(\mathbb{R}^2)} \leq  \theta | u|_{L^2(\slab_\delta)},\label{near orthogonal hypothesis} \ee
we have 
\be | L_\tau u |_{L^2(\slab_\delta)} \geq \mu_0 |u|_{L^2(\slab_\delta)}.\label{general nondegeneracy ineq} \ee
\end{enumerate}
\end{lemma}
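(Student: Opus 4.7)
The plan is to prove (b) by a contradiction/compactness argument on the whole plane and to deduce (a) from (b). The vehicle is the odd-reflection extension $E\colon X_\delta^2 \to H_{\mathrm{e}}^2(\R^2)$ of \eqref{eq:extension}: by \eqref{compare u and Eu} it is an $L^2$ quasi-isometry, and modulo errors that are small as $\delta \searrow 0$, it turns the slab operator $L_\tau$ into $\mathcal{L}_\tau := -\Delta + \gamma'(U(\tau))$ on $\R^2$. The uniform invertibility of $\mathcal{L}_\tau$ on $(I-P)X$ coming from Lemma~\ref{nondegeneracy in R^2 lemma} then closes the argument. For (a), the near-orthogonality estimate \eqref{P estimate} shows that every $u \in U_2^\perp$ satisfies the hypothesis \eqref{near orthogonal hypothesis} with $\theta$ as small as $C\delta^{\frac{1}{4}} e^{-(1-|\tau|)/\delta}$, so after fixing any $\theta_0 \in (0,1)$ one can take $\lambda_0 := \mu_0(\theta_0)$ for $\delta$ small.

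Fix $\theta \in (0,1)$ and suppose (b) fails. Extract sequences $\delta_n \searrow 0$, $\tau_n \to \tau_* \in [-\tfrac{1}{3}, \tfrac{1}{3}]$, and $u_n \in X_{\delta_n}^2$ with $|u_n|_{L^2(\slab_{\delta_n})} = 1$, $|P E u_n|_{L^2(\R^2)} \leq \theta$, and $|L_{\tau_n} u_n|_{L^2} \to 0$. Uniformly elliptic Dirichlet regularity on the flat-sided slab provides a uniform $H^2$ bound on $u_n$, so $v_n := E u_n$ is bounded in $H_{\mathrm{e}}^2(\R^2)$ and $|v_n|_{L^2(\R^2)} \geq 1$ by \eqref{compare u and Eu}. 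The central claim is that $|\mathcal{L}_{\tau_n} v_n|_{L^2(\R^2)} \to 0$. Inside $\slab_{\delta_n}$ this is exactly $|L_{\tau_n} u_n|_{L^2} \to 0$. In the outer annulus $\{1/\delta_n < |x_2| < 3/(2\delta_n)\}$, I would expand $-\Delta v_n$ via the product rule applied to $v_n(x) = -u_n(\tilde x) \varrho((\delta_n|x_2|-1))$ with $\tilde x := (x_1, \pm 2/\delta_n - x_2)$, and substitute $\Delta u_n(\tilde x) = \gamma'(U(\tau_n))(\tilde x) u_n(\tilde x) + L_{\tau_n} u_n(\tilde x)$. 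This produces three error types, each of which vanishes in $L^2(\R^2)$: (i) a reflected copy of the forcing, $\varrho \cdot L_{\tau_n} u_n(\tilde\cdot)$, bounded in $L^2$ by $|L_{\tau_n} u_n|_{L^2} \to 0$; (ii) cutoff-derivative terms with $|\nabla \varrho|_\infty = O(\delta_n)$ and $|\Delta \varrho|_\infty = O(\delta_n^2)$ acting against $H^2$-bounded $u_n(\tilde \cdot)$; and (iii) the symbol mismatch $[\gamma'(U(\tau_n))(x) - \gamma'(U(\tau_n))(\tilde x)] v_n$, pointwise of size $O(e^{-1/(6\delta_n)})$ since both $x$ and $\tilde x$ sit at distance $\gtrsim 1/(6\delta_n)$ from the centre $\tau_n e_2/\delta_n$ by Proposition~\ref{prop:sign of Uy}.

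Since $\mathcal{L}_{\tau_n} \partial_{x_2} U(\tau_n) \equiv 0$, we have $\mathcal{L}_{\tau_n} v_n = \mathcal{L}_{\tau_n} (I-P) v_n$, so the uniform bound from Lemma~\ref{nondegeneracy in R^2 lemma} (uniform over $|\tau|\leq \tfrac{1}{3}$ since $U(\tau)$ is merely a translate of $U$) yields $|(I-P)v_n|_{L^2(\R^2)} \to 0$. On the other hand, $|v_n|_{L^2}^2 \geq 1$ together with $|P v_n|_{L^2} \leq \theta < 1$ forces $|(I-P)v_n|_{L^2}^2 \geq 1 - \theta^2 > 0$, a contradiction. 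The main obstacle is the exterior computation: three distinct smallness mechanisms---$\delta$-scaling of cutoff derivatives, exponential decay of $\gamma'(U)-1$ away from $\tau e_2/\delta$, and the hypothesis $|L_\tau u|_{L^2} \to 0$---must be combined into a single uniform $L^2$ estimate. This is where the confinement $|\tau|\leq \tfrac{1}{3}$ is used essentially, as it keeps the spike centre $\tau e_2/\delta$ at distance $\gtrsim 2/(3\delta)$ from $\partial \slab_\delta$, so that both the exterior and its odd reflection into the slab lie deep in the exponentially decaying tail of $U(\tau)$.
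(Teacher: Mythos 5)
Your proposal is correct and follows essentially the same route as the paper: deduce (a) from (b) via the near-orthogonality estimate \eqref{P estimate}, then prove (b) by extending $u$ to $\R^2$ via the odd reflection $E$, computing the commutator $[-\Delta+\gamma'(U),E]u$ on the cutoff annulus (cutoff-derivative, symbol-mismatch, and reflected-forcing terms), and invoking the uniform invertibility of $-\Delta+\gamma'(U)$ on $(I-P)X$ from Lemma~\ref{nondegeneracy in R^2 lemma} together with the lower bound $|(I-P)Eu|^2\geq(1-\theta^2)|u|^2$. The only cosmetic difference is that you phrase the argument as a contradiction along a sequence $\delta_n\searrow 0$ and secure the $H^2$ bound on $u_n$ by elliptic Dirichlet regularity, whereas the paper argues directly with quantitative two-sided bounds and removes the $\partial_{x_2}u$ term in the commutator by interpolation.
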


\begin{proof}  First observe that in light of \eqref{P estimate}, for any fixed $\theta \in (0, 1)$, any element $u \in U_2^\perp$ will satisfy \eqref{near orthogonal hypothesis} for $\delta$ sufficiently small.  It therefore suffices to prove part (b).  Fix $\theta \in (0,1)$ as above and let $u$ satisfy the near orthogonality condition \eqref{near orthogonal hypothesis} be given.  By linearity, we can assume without loss of generality  that $|u|_{L^2(\slab_\delta)} \leq 1$.  

From the definition  \eqref{eq:extension} of the extension $E$, 
\[ 
\left[-\Delta + \gamma^\prime(U), \, E\right] u = 0 \qquad \textrm{on } \slab_\delta \cup (\slab_{3\delta/2})^c.
\] 
We compute that, for \(\pm x_2 > 1/\delta\), one has
\begin{align*}
&\left( (- \Delta + \gamma^\prime(U)) E u\right) (x) \\ 
 & \qquad = \Delta u(x_1, \pm\textstyle{\frac{2}{\delta}} - x_2) \varrho( \pm \delta x_2 - 1 
) \mp 2 \delta \varrho^\prime(\pm\delta x_2 -1 
) \partial_{x_2} u (x_1, \pm \frac{2}{\delta} -x_2)\\ 
& \qquad \qquad + \delta^2 u(x_1, \pm\textstyle{\frac{2}{\delta}} - x_2) \varrho^{\prime\prime}(\pm\delta x_2 - 1
)  - \gamma^\prime(U) u(x_1, \pm\textstyle{\frac{2}{\delta}} - x_2) \varrho(\pm\delta x_2 - 1
).
\end{align*}
This leads directly to the following expression for the commutator on the set $\{\pm x_2 > 1/\delta\}$
\begin{equation}\label{eq:EL-commutator}
\begin{split}
\left(\left[ -\Delta + \gamma^\prime(U) , \, E \right] u\right) (x)  &  = \left( \left( -\Delta + \gamma^\prime(U) \right) Eu\right)(x) - \left( E \left(-\Delta + \gamma^\prime(U) \right) u\right)(x) \\
&= \mp 2 \delta \varrho^\prime(\pm\delta x_2 - 1
) \partial_{x_2} u (x_1, \pm\textstyle{\frac{2}{\delta}} -x_2) 
\\& \quad 
+ \delta^2 u(x_1,\pm \textstyle{\frac{2}{\delta}} - x_2) \varrho^{\prime\prime}(\pm\delta x_2 - 1 
)\\ 
&\quad - \big(\gamma^\prime(U) - \gamma^\prime(U(x_1,\textstyle{\pm\frac{2}{\delta}} - x_2)) \big) u(x_1, \textstyle{\pm\frac{2}{\delta}} - x_2) \varrho(\pm\delta x_2 - 1
).
\end{split}
\end{equation}
Now, measuring the left- and right-hand sides of \eqref{eq:EL-commutator} in $L^2(\mathbb{R}^2)$, taking into account the estimates of $\gamma^\prime(U) = 1 + O(U)$ for small $U$ and Proposition~\ref{prop:sign of Uy}, we find that 
\[
| [-\Delta + \gamma^\prime(U), \, E] u |_{L^2(\mathbb{R}^2)} \lesssim \delta |\partial_{x_2} u |_{L^2(\slab_\delta)} + \delta^2 | u | _{L^2(\slab_\delta)}.
\]
The $\partial_{x_2} u$ term above can be eliminated via interpolation:
\begin{align*}
|\partial_{x_2} u |_{L^2(\slab_\delta)}^2 & \lesssim | \Delta u|_{L^2(\slab_\delta)} |u|_{L^2(\slab_\delta)}   \lesssim |L_\tau u|_{L^2(\slab_\delta)} |u|_{L^2(\slab_\delta)} + |u|_{L^2(\slab_\delta)}^2 \\
&  \lesssim |L_\tau u|_{L^2(\slab_\delta)}^2+ |u|_{L^2(\slab_\delta)}^2.
\end{align*}
Inserting this inequality into \eqref{eq:EL-commutator}, we arrive at the commutator bound
\be | [-\Delta+ \gamma^\prime(U), \, E] u|_{L^2(\mathbb{R}^2)} \lesssim \delta \left( | L_\tau u |_{L^2(\slab_\delta)} + |u|_{L^2(\slab_\delta)}\right),  \label{final EL-commutator} \ee
independent of $\delta$, $\tau$,  and $u$.  

We are now prepared to prove the estimate \eqref{general nondegeneracy ineq}.  From Lemma~\ref{nondegeneracy in R^2 lemma} we have that
\begin{equation}\label{eq:LEu1}
\begin{aligned}
|(-\Delta +\gamma^\prime(U))Eu|_{L^2(\R^2)}^2 &= |(-\Delta +\gamma^\prime(U))(1-P)Eu|_{L^2(\R^2)}^2 \\
&\gtrsim  |(1-P)Eu|_{L^2(\R^2)}^2 = |Eu|_{L^2(\R^2)}^2 - |P Eu|_{L^2(\R^2)}^2\\
&\ge \left(1- {\theta}^2 \right) |u|_{L^2(\slab_\delta)}^2,
\end{aligned}
\end{equation}
where the last inequality follows from  hypothesis \eqref{near orthogonal hypothesis} and \eqref{compare u and Eu}. On the other hand, together \eqref{compare u and Eu} and the commutator estimate \eqref{final EL-commutator} reveal that
\begin{equation}\label{eq:LEu2}
\begin{aligned}
|(-\Delta + \gamma^\prime(U))Eu|_{L^2(\R^2)} &\lesssim  |EL_\tau u|_{L^2(\R^2)} +  \delta \left( | L_\tau u |_{L^2(\slab_\delta)} + |u|_{L^2(\slab_\delta)}\right) \\
&\lesssim  |L_\tau u|_{L^2(\slab_\delta)} + \delta |u|_{L^2(\slab_\delta)}. 
\end{aligned}
\end{equation}
Combined, \eqref{eq:LEu1} and \eqref{eq:LEu2} imply that \eqref{general nondegeneracy ineq} holds when $\delta$ is taken sufficiently small, which completes the proof.  \end{proof}

\subsection{Construction of a near-degenerate eigenfunction}

\begin{figure}
\centering
\includegraphics[scale=1]{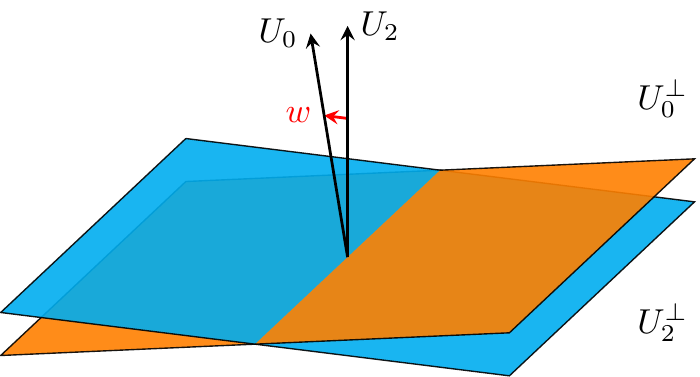}
\caption{We construct an eigenfunction $U_0$ that is almost parallel to $U_2$, and thus it will take the form $U_0 \eqsim U_2 + w$, for some $w \in U_2^\perp$ with $|w| \ll 1$.}
\end{figure}

In Section \ref{approximate eigen section}, it was shown that the function $U_2$ roughly aligns with the near-degenerate direction of $L_\tau$ in the sense that the restriction $L_\tau \colon X_\delta^2 \cap U_2^\perp \to X_\delta^0$ is uniformly positive according to \eqref{eq:nondegeneracy}, where $U_2^\perp$ is defined in \eqref{E:U2perp}.  We now refine our analysis to find a (very small)  eigenvalue $l$ and corresponding eigenfunction $U_0$ near $U_2$ that limits to $\partial_{x_2} U$ in some sense as $\delta \searrow 0$. 
Similar as \(P\) above, denote by $P_2$ the $L^2(\slab_\delta)$ orthogonal projection $X_\delta^0 \to \spn{\{U_2\}}$ and also the projection it induces from $X_\delta^2 \to \spn{\{U_2\}}$.

\begin{lemma} \label{invertibility lemma} Consider the operator 
\[ \tilde L_\tau\colon D(\tilde L_\tau) = U_2^\perp \cap X_\delta^2 = (1-P_2) X_\delta^2  
\to U_2^\perp  = (1-P_2)X_\delta^0
\]
defined by 
\[ \tilde L_\tau u =  (1-P_2) L_\tau u  \qquad \textrm{for all } u \in (1-P_2) X_\delta^2. \]
There exists $\delta_0 > 0$ such that, for all $|\tau| \le \frac 13$ and $\delta \in (0,\delta_0)$, we have that $\tilde L_\tau$ is an isomorphism and is self-adjoint as an unbounded and densely defined operator on $U_2^\perp$ with $|\tilde L_\tau^{-1}| \eqsim 1$. 
\end{lemma}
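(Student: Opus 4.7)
The plan uses three ingredients: the self-adjointness of $L_\tau$ on $X_\delta^0$, the non-degeneracy on $U_2^\perp$ from Lemma~\ref{non-degeneracy proposition}(a), and the exponential smallness $|L_\tau U_2|_{L^2} \lesssim \delta^{1/4}|\log \delta|^{1/2} e^{-2(1-|\tau|)/\delta}$ from Lemma~\ref{L:U2}. Standard elliptic theory makes $L_\tau = -\Delta + \gamma'(U)$ with homogeneous Dirichlet conditions self-adjoint on $X_\delta^0$ with domain $X_\delta^2$, and since $U_2 \in X_\delta^2$ the projection $P_2$ preserves $X_\delta^2$; hence $D(\tilde L_\tau) = (1-P_2)X_\delta^2$ is dense in $U_2^\perp$ in the $L^2$-norm. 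Symmetry of $\tilde L_\tau$ is then immediate from $(\tilde L_\tau u, v)_{L^2} = (L_\tau u, v)_{L^2} = (u, L_\tau v)_{L^2} = (u, \tilde L_\tau v)_{L^2}$ for $u, v \in D(\tilde L_\tau)$, using only $v \perp U_2$ and the self-adjointness of $L_\tau$.

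The quantitative core is the uniform lower bound. Pythagoras in $L^2(\slab_\delta)$ gives
\[
|\tilde L_\tau u|_{L^2}^2 = |L_\tau u|_{L^2}^2 - |P_2 L_\tau u|_{L^2}^2 \quad \text{for } u \in D(\tilde L_\tau).
\]
Lemma~\ref{non-degeneracy proposition}(a) bounds the first summand below by $\lambda_0^2 |u|_{L^2}^2$. For the correction, self-adjointness of $L_\tau$ applied to $u, U_2 \in X_\delta^2$ yields
\[
|P_2 L_\tau u|_{L^2} = \frac{|(u, L_\tau U_2)_{L^2}|}{|U_2|_{L^2}} \leq \frac{|L_\tau U_2|_{L^2}}{|U_2|_{L^2}} |u|_{L^2}.
\]
Here $|U_2|_{L^2}$ is bounded below by a $\delta$-independent positive constant: writing $U_2 = \p_{x_2} U - (\p_{x_2} U)_\bc$, Corollary~\ref{C:Ubc} makes $(\p_{x_2} U)_\bc$ exponentially small in $L^2$, while $|\p_{x_2} U|_{L^2(\slab_\delta)}$ converges to $|\p_{x_2} U|_{L^2(\mathbb{R}^2)}>0$ as $\delta \searrow 0$ by Proposition~\ref{prop:sign of Uy}. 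Combined with Lemma~\ref{L:U2}, the correction is exponentially smaller than $\lambda_0|u|_{L^2}$, and shrinking $\delta_0$ yields $|\tilde L_\tau u|_{L^2} \geq (\lambda_0/2)|u|_{L^2}$ uniformly in $|\tau| \leq \tfrac{1}{3}$.

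To upgrade symmetry to self-adjointness, I would invoke a compression principle. Suppose $v \in D(\tilde L_\tau^*) \subset U_2^\perp$. For arbitrary $w \in X_\delta^2$, decompose $w = \alpha U_2 + (1-P_2)w$ with $|\alpha| \leq |w|_{L^2}/|U_2|_{L^2}$; then
\[
(v, L_\tau w)_{L^2} = (v, \tilde L_\tau (1-P_2)w)_{L^2} + \alpha (v, L_\tau U_2)_{L^2},
\]
and both terms are $O(|w|_{L^2})$, the first via the adjoint hypothesis and the second since $L_\tau U_2 \in L^2$. Self-adjointness of $L_\tau$ then forces $v \in X_\delta^2$, hence $v \in D(\tilde L_\tau)$, and the same expansion identifies $\tilde L_\tau^* v$ with $\tilde L_\tau v$. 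Equivalently, one may restrict the closed symmetric form $(L_\tau \placeholder, \placeholder)_{L^2}$ on $H^1_0(\slab_\delta)\cap L^2_\mathrm{e}$ to the closed codimension-$1$ subspace $H^1_0(\slab_\delta)\cap L^2_\mathrm{e}\cap U_2^\perp$ and invoke Friedrichs; elliptic regularity then identifies the associated self-adjoint operator with $\tilde L_\tau$.

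A self-adjoint operator satisfying $|\tilde L_\tau u|_{L^2} \geq (\lambda_0/2)|u|_{L^2}$ is an isomorphism $D(\tilde L_\tau) \to U_2^\perp$ with $|\tilde L_\tau^{-1}|_{\mathcal{L}(U_2^\perp)} \leq 2/\lambda_0$; the matching lower bound $|\tilde L_\tau^{-1}|_{\mathcal{L}(U_2^\perp)} \gtrsim 1$ follows by testing against a fixed smooth bump $\chi \in C_c^\infty(\slab_1)\cap L^2_\mathrm{e}$ supported well inside $\slab_1$, for which $(1-P_2)\chi$ has both $|\cdot|_{L^2}$ and $|L_\tau \cdot|_{L^2}$ uniformly $\eqsim 1$ as $\delta \searrow 0$. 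The main obstacle is the self-adjointness step: mere symmetry does not give an isomorphism, and one must rule out an enlargement of the adjoint domain. The compression bookkeeping (or the Friedrichs reformulation) above is the decisive technical point, while the remaining steps reduce to assembling the estimates of Lemmas~\ref{L:U2} and~\ref{non-degeneracy proposition}.
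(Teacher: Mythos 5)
Your proof is correct and follows essentially the same route as the paper: both establish the uniform lower bound by combining Lemma~\ref{non-degeneracy proposition}(a) with the exponential smallness of $P_2 L_\tau u$ (via Lemma~\ref{L:U2} and the self-adjointness of $L_\tau$), and both deduce self-adjointness of $\tilde L_\tau$ from the self-adjointness of $L_\tau$ on $X_\delta^0$ together with the fact that $L_\tau U_2 \in L^2(\slab_\delta)$. The paper's direct characterization of $D(\tilde L_\tau^*)$ proceeds by augmenting the test class with multiples of $U_2$, which is the same calculation as your compression-principle bookkeeping, merely packaged differently.
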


\begin{proof}
Throughout the proof, all norms and inner products are evaluated on the domain $\slab_\delta$.  By definition, $u \in D(\tilde L_\tau^*) \subset U_2^\perp$ and $\tilde u = \tilde L_\tau^* u \in U_2^\perp$ if and only if 
\[
(\tilde u, v)_{L^2} - (u, \tilde L_\tau v)_{L^2} =0 \qquad \textrm{for all } v \in U_2^\perp \cap X_\delta^2, 
\]
which holds if and only if 
\[
\left( \tilde u + \frac {(u, \, L_\tau U_2)_{L^2} }{|U_2|_{L^2}^2 } U_2, \,  aU_2 + v \right)_{L^2} - \left(u, \, L_\tau (aU_2 + v) \right)_{L^2}=0, \]
for all $a \in \R$ and  $v \in U_2^\perp\cap X_\delta^2$.   Since $L_\tau^*= L_\tau$ on $X_\delta^0$, we obtain that $u \in D(\tilde L_\tau^*)$ and $\tilde u = \tilde L_\tau^* u$ if and only if 
\[
u \in U_2^\perp \cap X_\delta^2 \quad  \text{ and } \quad \tilde u + \frac {(u, \, L_\tau U_2)_{L^2} }{|U_2|_{L^2}^2 } U_2 = L_\tau u.
\]
Thus $D(\tilde L_\tau^*) = U_2^\perp \cap X_\delta^2$, and  $\tilde u= (I-P_2) L_\tau u = \tilde L_\tau u$. This implies that $\tilde L_\tau$ is indeed self-adjoint on its domain in $U_2^\perp$. 

Next, we improve slightly the bound of $\tilde L_\tau$ in \eqref{eq:nondegeneracy}: observe that, for all \(u \in (1-P_2) X_\delta^2\),
\begin{align*}
|u|_{H^2(\slab_\delta)} &\lesssim |u|_{L^2} + |\Delta u|_{L^2} \lesssim  |u|_{L^2} + |L_\tau u|_{L^2} \lesssim  |L_\tau u|_{L^2} \lesssim  |\tilde L_\tau u|_{L^2} + |P_2 L_\tau u|_{L^2}. 
\end{align*}
But, due to equation \eqref{E:LtauU2} satisfied by $U_2$ and Lemma~\ref{L:U2}, we know that 
\[
|P_2 L_\tau u|_{L^2(\slab_\delta)} \eqsim | ( L_\tau u,\,  U_2 )_{L^2(\slab_\delta)}| = |( u, \,  L_\tau U_2 )_{L^2(\slab_\delta)}| \lesssim e^{-\frac{2(1-|\tau|)}{\delta}} |u|_{L^2(\slab_\delta)}, 
\]
and thus
\be \label{E:tL-non-deg} 
|u|_{H^2(\slab_\delta)} \lesssim |\tilde L_\tau u|_{L^2(\slab_\delta)}.
\ee
This implies that $\tilde L_\tau$ is an isomorphism from $U_2^\perp \cap X_\delta^2$ to its range --- a closed subspace of $U_2^\perp$. It follows from the self-adjointness of $\tilde L_\tau$ on $U_2^\perp$ that it is an isomorphism from $U_2 ^\perp \cap X_\delta^2$ to $U_2^\perp$. 
\end{proof}

 \begin{proposition}[Existence of $U_0$]\label{prop:U_0} 
For each $|\tau|\le  \tfrac{1}{3}$ and $ \delta > 0 $ sufficiently small, there exists an 
eigenfunction
\be \label{form of U0} 
U_0 = a_0( w + U_2) \in X_\delta^{k_0+1}, \qquad w \in (I-P_2) X_\delta^{k_0+1}, \quad |U_0|_{L^2(\slab_\delta)} =1
\ee
of \(L_\tau\) with a real eigenvalue $l = l(\delta, \tau)$:
\[ 
L_\tau U_0 = l U_0 \qquad \textrm{in } \slab_\delta.
\]
They obey the estimates  
\[
|w|_{H^{k_0+1}(\slab_\delta)} \lesssim \delta^{\frac 14} \left|\log{\delta} \right|^{\frac 12} e^{-\frac {2(1-|\tau|)}\delta}, \quad 0< l(\delta, \tau) \eqsim \delta^{\frac 12} e^{-\frac {2(1-|\tau|)}\delta}, \quad 0< a_0 \eqsim 1.
\]
Moreover, for fixed \(\delta\), \(l\) is $C^{k_0-1}$ in $\tau$ and \(U_0 \in X_\delta^{k+2} \) is $C^{k_0-k-1}$ in \(\tau\) for $0\le k\le k_0-1$, respectively. 
\end{proposition}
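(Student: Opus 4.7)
The strategy is a Lyapunov--Schmidt reduction that uses $U_2$ as the near-kernel direction. I would make the ansatz $U_0 = a_0(U_2 + w)$ with $w \in (I-P_2)X_\delta^{k_0+1}$ and $a_0 > 0$ chosen to enforce normalization. Inserting this into $L_\tau U_0 = l U_0$ and projecting with $I - P_2$ and $P_2$ (and using $P_2 w = 0$) yields the coupled system
\begin{align*}
\tilde L_\tau w - l w &= -(I-P_2) L_\tau U_2, \\
(U_2, L_\tau U_2)_{L^2(\slab_\delta)} + (L_\tau w, U_2)_{L^2(\slab_\delta)} &= l\,|U_2|_{L^2(\slab_\delta)}^2.
\end{align*}

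First, for $|l|$ sufficiently small, Lemma~\ref{invertibility lemma} and a Neumann series make $\tilde L_\tau - l$ an isomorphism on $U_2^\perp$ with uniformly bounded inverse, so the orthogonal equation solves uniquely as $w = w(l) = -(\tilde L_\tau - l)^{-1}(I-P_2) L_\tau U_2$. Combined with the $L^2$-bound from Lemma~\ref{L:U2}, this yields $|w|_{H^2(\slab_\delta)} \lesssim \delta^{1/4}|\log\delta|^{1/2} e^{-2(1-|\tau|)/\delta}$. An elliptic bootstrap on the rewritten equation $L_\tau w = lw + c(w) U_2 - (I-P_2) L_\tau U_2$, where $c(w) = (w, L_\tau U_2)_{L^2}/|U_2|_{L^2}^2$ is scalar, then upgrades $w$ to $H^{k_0+1}$ with the same exponential bound (using $\gamma \in C^{k_0}$ and the $H^{k_0-1}$-bound on $L_\tau U_2$ from Lemma~\ref{L:U2}). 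Substituting $w = w(l)$ into the scalar equation and invoking self-adjointness $(L_\tau w, U_2) = (w, L_\tau U_2)$ reduces the eigenvalue problem to the scalar fixed-point equation
\[
l = \mathcal{T}(l, \tau) := \frac{(U_2, L_\tau U_2)_{L^2(\slab_\delta)} - \bigl((\tilde L_\tau - l)^{-1}(I-P_2) L_\tau U_2,\; L_\tau U_2\bigr)_{L^2(\slab_\delta)}}{|U_2|^2_{L^2(\slab_\delta)}}.
\]
By Lemma~\ref{L:U2}, the leading term $l_* := (U_2, L_\tau U_2)_{L^2}/|U_2|^2_{L^2}$ is positive with $l_* \eqsim \delta^{1/2} e^{-2(1-|\tau|)/\delta}$, while Cauchy--Schwarz bounds the correction by $O(|L_\tau U_2|_{L^2}^2) = O(\delta^{1/2}|\log\delta| e^{-4(1-|\tau|)/\delta})$, which is quadratically smaller. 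A Banach contraction on a small ball around $l_*$ (whose contraction constant follows from the resolvent identity $(\tilde L_\tau - l_1)^{-1} - (\tilde L_\tau - l_2)^{-1} = (l_1 - l_2)(\tilde L_\tau - l_1)^{-1}(\tilde L_\tau - l_2)^{-1}$) then produces a unique real fixed point $l = l(\delta, \tau)$ with the claimed positivity and asymptotics. The normalization $a_0 = |U_2 + w|^{-1}_{L^2(\slab_\delta)} \eqsim 1$ holds because $|U_2|_{L^2} \eqsim 1$, which follows from Corollaries~\ref{C:Unorms}--\ref{C:Ubc} combined with Proposition~\ref{prop:sign of Uy} (since $U_2 = \partial_{x_2} U - (\partial_{x_2} U)_{\bc}$ with the correction exponentially small, and $\partial_{x_2} U$ has $L^2(\mathbb{R}^2)$-mass bounded below independently of $\delta, \tau$), while $|w|_{L^2}$ is exponentially small.

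Finally, the smoothness in $\tau$ comes from applying the implicit function theorem to $F(l,\tau) := l - \mathcal{T}(l, \tau)$: the derivative $\partial_l F = 1 + O(\delta^{1/2}|\log\delta|\, e^{-4(1-|\tau|)/\delta})$ is uniformly invertible, and the $\tau$-regularity of $\mathcal{T}$ is inherited from that of $U(\placeholder, \tau) = U(\placeholder - (\tau/\delta) e_2)$, with one spatial derivative of $U$ lost each time $\partial_\tau$ is applied. This yields $l \in C^{k_0-1}$ in $\tau$ and $U_0 \in C^{k_0-k-1}((-\tfrac{1}{3},\tfrac{1}{3}); X_\delta^{k+2})$ for $0 \le k \le k_0-1$. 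The most delicate point throughout is verifying that the sharp $L^2$-smallness $|L_\tau U_2|_{L^2} \lesssim \delta^{1/4}|\log\delta|^{1/2} e^{-2(1-|\tau|)/\delta}$ of Lemma~\ref{L:U2} is genuinely better than $\sqrt{l_*}$, for only then does squaring it through Cauchy--Schwarz produce a correction to $l_*$ in the scalar equation that is truly lower-order.
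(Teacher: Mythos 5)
Your proposal is correct and uses the same decomposition (ansatz $U_0 = a_0(U_2 + w)$, projections $P_2$ and $I - P_2$) and the same key inputs (Lemma~\ref{L:U2} for $|L_\tau U_2|_{L^2}$ and $(U_2, L_\tau U_2)$, Lemma~\ref{invertibility lemma} for $\tilde L_\tau^{-1}$), but organizes the Lyapunov--Schmidt step differently from the paper. The paper fixes on the infinite-dimensional unknown: it defines $\Lambda(w) = \ell(w)\tilde L_\tau^{-1}w - \tilde L_\tau^{-1}(I-P_2)L_\tau U_2$ with $\ell(w) = (w+U_2, L_\tau U_2)/|U_2|^2$ a scalar functional, and contracts in $w$ on the unit ball of $X_\delta^0$, so that the eigenvalue $l = \ell(w^*)$ drops out afterward. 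You instead solve the orthogonal equation first, $w = w(l) = -(\tilde L_\tau - l)^{-1}(I-P_2)L_\tau U_2$ (using a Neumann series for $|l|$ small, which your exponentially small $l_*$ justifies a posteriori), and then contract in the \emph{scalar} $l$ via the reduced equation $l = \mathcal{T}(l, \tau)$. This is the dual formulation: the paper avoids inverting $\tilde L_\tau - l$ and tracks only $\tilde L_\tau^{-1}$, while your scalar reduction is the more classical Lyapunov--Schmidt shape and makes the $\tau$-dependence transparent --- the implicit function theorem applied to $F(l,\tau) = l - \mathcal{T}(l,\tau)$, with $\partial_l F$ uniformly near $1$, gives the $C^{k_0-1}$ regularity of $l$ directly, whereas the paper cites standard spectral perturbation theory for simple eigenvalues. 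Your elliptic bootstrap from the rewritten equation $L_\tau w = lw + c(w)U_2 - (I-P_2)L_\tau U_2$ is sound, and the observation at the end --- that $|L_\tau U_2|_{L^2}^2 = O(\delta^{1/2}|\log\delta|\, e^{-4(1-|\tau|)/\delta})$ is genuinely smaller than $l_* \eqsim \delta^{1/2} e^{-2(1-|\tau|)/\delta}$ --- is exactly the point that makes either contraction close; the paper exploits the same gap implicitly in showing $\Lambda$ preserves $B_1$.
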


Here $a_0$ is simply a normalizing constant so that $|U_0|_{L^2(\slab_\delta)} =1$. 

\begin{proof}
From Lemma~\ref{invertibility lemma}, we know that there exists $\delta_0 > 0$ such that, for any $\delta \in (0,\delta_0)$, $\tilde L_\tau$ is an isomorphism from $(I-P_2) X_\delta^2$ to $(I-P_2) X_\delta^0$.  By \eqref{E:tL-non-deg}, its inverse satisfies 
 \begin{equation}\label{eq:norm of A}
 |\tilde L_\tau^{-1} |_{\mathcal{L}((I-P_2)X_\delta^0; (I-P_2)X_\delta^2)} \leq \lambda_0^{-1} \qquad \textrm{for all } |\tau| \le  \tfrac{1}{3},
 \end{equation}
 for some $\lambda_0>0$ independent of $|\tau|\le \frac 13$ and small $\delta>0$. A function $U_2 + w$, with $w\in(I-P_2) X_\delta^2$, is an eigenfunction corresponding to $l$ if 
 \[
 L_\tau (U_2 + w) = l (U_2+w). 
 \] 
Taking the inner product of the above equation with $U_2$ yields 
\[
l = \frac{(w+U_2, \, L_\tau U_2)_{L^2(\slab_\delta)}}{ |U_2|_{L^2(\slab_\delta)}^2}.
\]
On the other hand, applying $I-P_2$ to the eigenfunction equation and recalling that $P_2 w=0$, we see that 
\begin{equation} \label{E:w}
\tilde L_\tau w = lw - (I-P_2) L_\tau U_2.
\end{equation}
This motivates us to consider the mapping $\Lambda\colon U_2^\perp \to U_2^\perp$ defined by 
\be \label{def contraction Lambda} 
\Lambda(w) = \ell(w) \tilde L_\tau^{-1} w - \tilde L_\tau^{-1} (1-P_2) L_\tau U_2, 
\ee
where
\be \label{def contraction ell} 
\ell(w) = \frac{(w+U_2, \, L_\tau U_2)_{L^2(\slab_\delta)}}{ |U_2|_{L^2(\slab_\delta)}^2}
\ee
is the presumptive eigenvalue. Clearly a small fixed point $w\in U_2^\perp$ of $\Lambda$ yields an eigenfunction $w+U_2$ of $L_\tau$ close to $U_2$ associated to the eigenvalue $\ell(w)$. 

It is straightforward to estimate  
 \begin{equation}\label{eq:l(w) explicit} \begin{split}
 |\ell(w)| \lesssim |L_\tau U_2|_{L^2(\slab_\delta)} |w|_{L^2(\slab_\delta)} + (U_2, L_\tau U_2)_{L^2(\slab_\delta)},
\end{split} \end{equation}
and 
\begin{align*}
|\ell(w_1) - \ell(w_2)| & \leq  | U_2|_{L^2(\slab_\delta)}^{-2}  |L_\tau U_2|_{L^2(\slab_\delta)} |w_1 - w_2|_{L^2(\slab_\delta)}.
\end{align*}
Likewise, we have 
 \begin{align*} |\Lambda(w_1) - \Lambda(w_2)|_{L^2(\slab_\delta)} & \leq |\ell(w_1)| |\tilde L_\tau^{-1}(w_1 - w_2)|_{L^2(\slab_\delta)} + |\ell(w_1) - \ell(w_2)| |\tilde L_\tau^{-1} w_2|_{L^2(\slab_\delta)} 
 \end{align*}
and 
 \[ 
| \Lambda(0)|_{L^2(\slab_\delta)} \lesssim |L_\tau U_2|_{L^2(\slab_\delta)},
 \]
where all above inequalities are uniform in $|\tau|\le \frac 13$ and small $\delta$. 
Consequently, Lemma \ref{L:U2} and \eqref{eq:norm of A} imply 
\(\Lambda\) is a contraction map that sends \(B_1\), the closed unit ball centered at the origin in $X_\delta^0$, to itself.  It therefore has a 
 unique fixed point $w^* = w^*(\delta, \tau) \in (1-P_2) X_\delta^2 \cap B_1$. This yields the eigenvalue $l = \ell(w^*)$  defined by \eqref{def contraction ell} and the corresponding eigenfunction $w+ U_2$ whose  higher Sobolev regularity is due to the ellipticity in \eqref{E:w}.
The normalizing constant $a_0>0$ is chosen such that $|U_0|_{L^2 (\slab_\delta)}=1$. 
Since $\gamma \in C^{k_0}$ and $U \in C^{k_0+2}$ with  exponential decay, it is easy to see that $L_\tau\colon X_\delta^{k+2} \to X_\delta^k$ is $C^{k_0-k-1}$ in $\tau$ for $k\ge 0$. From standard spectral theory, the simple eigenvalue $\ell$  is $C^{k_0-1}$ in $\tau$ and the {\it unit} eigenfunction $U_0 \in X_\delta^{k+2}$ of $L_\tau$ is $C^{k_0-k-1}$ in $\tau$ for $0\le k\le k_0-1$. 
As $w^*$ is a fixed point of the contraction $\Lambda$, its definition \eqref{def contraction Lambda} and Lemma \ref{L:U2}  imply 
\[
|w^*|_{L^2(\slab_\delta)} \lesssim |\Lambda(0)|_{L^2(\slab_\delta)} \lesssim \delta^{\frac 14} \left|\log{\delta}\right|^{\frac 12} e^{-\frac {2(1-|\tau|)}\delta}. 
\]
The higher Sobolev norms satisfy similar estimates due to the elliptic regularity given in \eqref{E:w}. Finally, we conclude from \eqref{def contraction ell}, the above inequality, and Lemma \ref{L:U2}, that 
\[
\left|\ell (w^*) -  \frac{(U_2, \, L_\tau U_2)_{L^2(\slab_\delta)}}{ |U_2|_{L^2(\slab_\delta)}^2}  \right| \le e^{-\frac {4(1-|\tau|)}\delta}.
\]
Along with Lemma \ref{L:U2}, this yield the desired estimate on $\ell$. The positivity of $\ell$ is a consequence of the sign of $(U_2, \, L_\tau U_2)_{L^2(\slab_\delta)}$ proved in Lemma \ref{L:U2}. 
\end{proof}

Using the estimates just obtained, we can now confirm that $L_\tau$ is invertible (with near-degeneracy in the $U_0$ direction) and, more important, that the inverse of its restriction to the orthogonal complement of $U_0$ is \emph{bounded independently} of $\delta$. That said, let $U_0$ be given as in Proposition \ref{prop:U_0}
and denote its orthogonal complement in $X_\delta^0$ 
by $U_0^\perp$.  

\begin{lemma}[Invertibility of $L_\tau$] \label{invertibility L on U_0 perp} 
There exists $\delta_0 > 0$ such that, for all $\delta \in (0,\delta_0)$ and $|\tau| \le \tfrac{1}{3}$, 
\be L_\tau\colon X_\delta^2 \to X_\delta^0 \qquad \textrm{is invertible}.\label{Lsigma invertible} \ee
Moreover, there exists $\mu_0 = \mu_0(\delta_0) > 0$ such that 
\[
 | L_\tau u |_{L^2(\slab_\delta)} \geq \mu_0 | u |_{L^2(\slab_\delta)} \qquad \textrm{for all } u \in 
U_0^\perp \cap X_\delta^2
\]
where $U_0^\perp$ is the $L^2(\slab_\delta)$ complement of $U_0$ in $X_\delta^0$. 
\end{lemma}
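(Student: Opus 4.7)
The plan is to reduce the coercivity claim on $U_0^\perp \cap X_\delta^2$ to Lemma~\ref{non-degeneracy proposition}(b) by verifying that every $u \in U_0^\perp$ satisfies the near-orthogonality hypothesis $|PEu|_{L^2(\R^2)} \le \theta |u|_{L^2(\slab_\delta)}$ for some fixed $\theta \in (0,1)$, uniformly in $|\tau| \le 1/3$ and in all sufficiently small $\delta$. The full invertibility $L_\tau\colon X_\delta^2 \to X_\delta^0$ will then follow from the self-adjointness of $L_\tau$ combined with the strict positivity $l > 0$ of the eigenvalue of $U_0$.

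For the reduction, I would fix $u \in U_0^\perp \cap X_\delta^2$ and decompose it as $u = \alpha U_2 + v$ with $v \in U_2^\perp \cap X_\delta^2$ and $\alpha \in \R$. Since $U_0 = a_0(w + U_2)$ with $w \in U_2^\perp$ by Proposition~\ref{prop:U_0}, the orthogonality $(u, U_0)_{L^2(\slab_\delta)} = 0$ collapses to $\alpha |U_2|_{L^2}^2 + (v, w)_{L^2} = 0$, whence
\[
|\alpha| \le \frac{|w|_{L^2}}{|U_2|_{L^2}^2}\, |v|_{L^2} \lesssim \delta^{1/4} |\log\delta|^{1/2} e^{-2(1-|\tau|)/\delta} |v|_{L^2(\slab_\delta)},
\]
using $|U_2|_{L^2(\slab_\delta)} \eqsim 1$ from Lemma~\ref{L:U2} together with the bound on $|w|_{H^{k_0+1}(\slab_\delta)}$ from Proposition~\ref{prop:U_0}. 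In particular $|u|_{L^2} \eqsim |v|_{L^2}$ for $\delta$ small. Splitting $PEu = \alpha PEU_2 + PEv$, the estimate \eqref{P estimate} yields $|PEv|_{L^2(\R^2)} \lesssim \delta^{1/4} e^{-(1-|\tau|)/\delta} |v|_{L^2(\slab_\delta)}$ because $v \in U_2^\perp$, while \eqref{compare u and Eu} gives $|PEU_2|_{L^2(\R^2)} \le |EU_2|_{L^2(\R^2)} \le 3|U_2|_{L^2(\slab_\delta)} \eqsim 1$. Combining these two estimates produces
\[
|PEu|_{L^2(\R^2)} \lesssim \delta^{1/4} |\log\delta|^{1/2} e^{-(1-|\tau|)/\delta} |u|_{L^2(\slab_\delta)},
\]
which is below any prescribed $\theta \in (0,1)$ times $|u|_{L^2(\slab_\delta)}$ once $\delta$ is taken small enough. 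Lemma~\ref{non-degeneracy proposition}(b) then supplies a uniform $\mu_0 > 0$ with $|L_\tau u|_{L^2(\slab_\delta)} \ge \mu_0 |u|_{L^2(\slab_\delta)}$ for all $u \in U_0^\perp \cap X_\delta^2$.

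To conclude invertibility, I would use that $L_\tau$ is self-adjoint on $L_{\textrm{e}}^2(\slab_\delta)$ with domain $X_\delta^2$, so the presence of the eigenpair $(l, U_0)$ makes the splitting $X_\delta^0 = \spn\{U_0\} \oplus U_0^\perp$ invariant under $L_\tau$. On the one-dimensional factor $L_\tau$ acts as multiplication by $l > 0$, and on $U_0^\perp \cap X_\delta^2$ it is symmetric with the closed-range lower bound $\mu_0$ just established; self-adjointness then forces its range to equal $(\ker)^\perp = U_0^\perp$. Both blocks being isomorphisms, so is $L_\tau$. The only step I expect to require care is the $\alpha$-estimate above, which leans crucially on the exponentially small size of $w$ from Proposition~\ref{prop:U_0}; everything else is a direct invocation of machinery already assembled in Section~\ref{sec:Spec}.
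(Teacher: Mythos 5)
Your proof is correct and rests on the same key observation as the paper's (elements of $U_0^\perp$ are exponentially close to $U_2^\perp$ because $U_0 = a_0(U_2 + w)$ with $w$ exponentially small), but you route the coercivity through Lemma~\ref{non-degeneracy proposition}(b) while the paper routes it through part (a). Concretely: where you verify that every $u \in U_0^\perp$ satisfies the near-orthogonality hypothesis $|PEu| \lesssim \delta^{1/4} e^{-(1-|\tau|)/\delta}|u|$ and then invoke the general criterion, the paper instead writes $v = v_1 + bU_2$ with $v_1 = (I-P_2)v \in U_2^\perp$, applies the $U_2^\perp$-coercivity $|L_\tau v_1| \geq \lambda_0 |v_1|$ from Lemma~\ref{non-degeneracy proposition}(a), and controls the error $|b|\,|L_\tau U_2|$ directly using Lemma~\ref{L:U2}; the exponentially small $b$ is obtained by exactly the orthogonality computation you carried out for $\alpha$. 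The paper's version is a shade more direct since it avoids re-establishing the near-orthogonality (which is implicit in deducing part (a) from part (b)), but both arguments are valid and require the same pieces of machinery. Two small imprecisions in your write-up, neither fatal: your combined bound on $|PEu|$ carries a spurious $|\log\delta|^{1/2}$ factor (it comes only from the $\alpha$-term, which is dominated by the $PEv$ term anyway), and in the invertibility paragraph you should briefly justify, as the paper does for $\tilde L_\tau$ in Lemma~\ref{invertibility lemma}, that the restriction of $L_\tau$ to $U_0^\perp \cap X_\delta^2$ is genuinely self-adjoint (not merely symmetric) before concluding range $= (\ker)^\perp$.
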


\begin{proof}  
Since $L_\tau$ is self-adjoint and $U_0$ is an eigenfunction of the eigenvalue $l$, it is standard that $U_0^\perp$ is invariant under $L_\tau$ in the sense that 
\[ L_\tau (U_0^\perp \cap X_\delta^2) \subset U_0^\perp.\]
Because of $l>0$ and again the self-adjointness of $L_\tau$, it suffices to prove that $L_\tau|_{U_0^\perp \cap X_\delta^2}: U_0^\perp \cap X_\delta^2 \to U_0^\perp$ has a lower bound 
 independent of $|\tau| \le \frac 13$ and small $\delta>0$. In fact, recall 
\[ U_0 = a_0 (U_2 + w), \qquad \textrm{with} \quad  w\in U_2^\perp, \quad |w|_{L^2(\slab_\delta)} \lesssim e^{-\frac {2(1-|\tau|)}\delta}.\]
Any $v \in U_0^\perp$ can be written as 
\[
v = v_1 + b U_2, \; \text{ where } \; v_1= (I-P_2) v \in U_2^\perp.
\]
We have 
\[
(v_1, w)_{L^2(\slab_\delta)} =(v, w)_{L^2(\slab_\delta)} = (v, \frac {U_0}{a_0} - U_2)_{L^2(\slab_\delta)} = - (v, U_2)_{L^2(\slab_\delta)}= -b |U_2|_{L^2(\slab_\delta)}^2
\]
and thus 
\[
b 
= - \frac {(v_1, w)_{L^2(\slab_\delta)}} {|U_2|_{L^2(\slab_\delta)}^2} 
= - \frac {(v, w)_{L^2(\slab_\delta)}} {|U_2|_{L^2(\slab_\delta)}^2}.
\]
It implies that $U_0^\perp$ and $U_2^\perp$ are isomorphic through 
\[
v_1 = (I-P_2) v \text{ with } |v - v_1|_{L^2(\slab_\delta)} \eqsim |b| \lesssim e^{-\frac {2(1-|\tau|)}\delta} |v|_{L^2(\slab_\delta)},
\]
where $ a_0 \eqsim 1$ was also used. Together with Lemmas \ref{L:U2} and  \ref{non-degeneracy proposition} we obtain
\[
|L_\tau v|_{L^2(\slab_\delta)} \ge |L_\tau v_1|_{L^2(\slab_\delta)} - |b| |L_\tau U_2|_{L^2(\slab_\delta)} \ge \frac {\lambda_0}2 |v_1|_{L^2(\slab_\delta)} \ge \frac {\lambda_0}4  |v|_{L^2(\slab_\delta)}
\]
which completes the proof. 
\end{proof}

The invertibility of $L_\tau$ also holds in higher Sobolev spaces due to  elliptic theory.

\begin{corollary} \label{C:invertibility}
There exists $\delta_0 > 0$ such that, for all $\delta \in (0,\delta_0)$, $|\tau| \le \tfrac{1}{3}$, and $0\le k \le k_0-1$, 
\be 
L_\tau\colon X_\delta^{k+2} \to X_\delta^k
\qquad \textrm{is invertible}.\label{Lsigma invertible-k} 
\ee
Moreover, there exists $\mu_0 = \mu_0(\delta_0) > 0$ such that 
\be 
| L_\tau u |_{H^k(\slab_\delta)} \geq \mu_0 | u |_{H^{k+2}(\slab_\delta)} \qquad \textrm{for all } u \in 
U_0^\perp \cap X_\delta^{k+2}.
\label{U0perp nondegeneracy estimate} \ee
\end{corollary}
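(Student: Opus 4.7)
The plan is to bootstrap the $L^2$-level statement of Lemma~\ref{invertibility L on U_0 perp} to higher Sobolev indices by means of standard elliptic regularity for the operator $L_\tau = -\Delta + \gamma'(U(\tau))$ on the slab $\slab_\delta$ with Dirichlet data on $\partial\slab_\delta$.  The point is that $L_\tau$ is a second-order uniformly elliptic operator whose potential $\gamma'(U)$ is bounded in $C^{k_0-1}(\slab_\delta)$ uniformly in $\tau \in [-\tfrac{1}{3},\tfrac{1}{3}]$ and $\delta > 0$ (since $U(\tau)$ is only a rigid translate of a fixed Schwartz-like function on $\R^2$), and $\slab_\delta$ has a flat smooth boundary.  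This is precisely the setting where local interior and boundary estimates assemble through a standard covering argument into a global estimate
\begin{equation}\label{eq:ell-reg}
|u|_{H^{k+2}(\slab_\delta)} \le C_k \bigl( |L_\tau u|_{H^k(\slab_\delta)} + |u|_{L^2(\slab_\delta)}\bigr), \qquad 0 \le k \le k_0 - 1,
\end{equation}
with $C_k$ independent of $\delta$ small and $|\tau|\le \tfrac13$.  I would first dispose of this estimate: interior regularity on balls of radius comparable to $1$ follows from the classical Schauder/$L^2$ theory for $-\Delta + V$ with $V \in C^{k_0-1}_b$, and the same holds for boundary half-balls since $\partial\slab_\delta$ is flat; uniformity in $\delta$ is automatic because the coefficients and the boundary geometry do not depend on $\delta$.

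Granting \eqref{eq:ell-reg}, the proof of \eqref{U0perp nondegeneracy estimate} is immediate.  For $u \in U_0^\perp \cap X_\delta^{k+2}$, Lemma~\ref{invertibility L on U_0 perp} gives $|u|_{L^2(\slab_\delta)} \le \mu_0^{-1} |L_\tau u|_{L^2(\slab_\delta)} \le \mu_0^{-1} |L_\tau u|_{H^k(\slab_\delta)}$, and plugging this into \eqref{eq:ell-reg} yields
\begin{equation*}
|u|_{H^{k+2}(\slab_\delta)} \le C_k\bigl(1 + \mu_0^{-1}\bigr) |L_\tau u|_{H^k(\slab_\delta)},
\end{equation*}
which is the desired uniform bound (after renaming $\mu_0$).

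For the invertibility \eqref{Lsigma invertible-k}, given $f \in X_\delta^k \subset X_\delta^0$, Lemma~\ref{invertibility L on U_0 perp} furnishes a unique $u \in X_\delta^2$ with $L_\tau u = f$.  Applying \eqref{eq:ell-reg} inductively (or simply at level $k$) upgrades $u$ to $X_\delta^{k+2}$ with the corresponding quantitative bound, giving surjectivity; injectivity is already known at the $L^2$ level.  The even-in-$x_1$ symmetry is preserved throughout because $U(\tau)$ is even, and therefore $L_\tau$ maps the even-subspace to itself.  I do not anticipate a real obstacle here: the only subtle point is the uniformity of the elliptic-regularity constant $C_k$ in $\delta$, which is however built in by the translation invariance of both the slab geometry and (modulo the harmless parameter $\tau$) of the potential $\gamma'(U)$.
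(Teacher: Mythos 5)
Your argument is correct and coincides with the paper's intent: the paper gives no proof of this corollary beyond the one-line remark that ``The invertibility of $L_\tau$ also holds in higher Sobolev spaces due to elliptic theory,'' and your proposal supplies exactly the missing detail---a $\delta$- and $\tau$-uniform elliptic a priori estimate for $-\Delta + \gamma'(U(\tau))$ on the flat slab $\slab_\delta$, combined with the $L^2$-level lower bound and invertibility from Lemma~\ref{invertibility L on U_0 perp}, with a bootstrap to put the solution in $X_\delta^{k+2}$.
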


\section{Proof of the main result} \label{proof main theorem sec}

In this section we complete the argument leading to the proof of Theorem~\ref{main euler theorem}.

\subsection{Normal bundle coordinates} \label{tubular section}

Recall from Section~\ref{sec:perturbation} that the waves we study are represented by two quantities: the boundary value  $\Gammas$ of a conformal mapping, that determines the fluid domain, and a (rescaled) stationary stream function $\varphi$ that gives the velocity field.  Our basic approach is to construct waves for which $|\Gammas|_{H^{k_0}} \ll 1$  and $\varphi$ is a perturbation of $U(\tau) - U(\tau)_\bc$, where the parameter $\tau \in (-\frac 13, \frac 13)$ selects the approximate altitude of the center of vorticity.  

At this stage, we have obtained detailed information regarding the spectrum of the linearized operator 
\[ L_\tau= -\Delta +\gamma^\prime\left(U(\tau)\right)\colon X_\delta^{k+2} \to X_\delta^{k}\]
and its dependence on $\tau$ and $\delta$.  In particular, we proved in Proposition~\ref{prop:U_0} that there exists a unique simple eigenvalue $l = l(\delta, \tau)$, associated to an eigenfunction $U_0(\tau)$, that converges to $0$ exponentially fast as $\delta \searrow 0$.  This presents an obvious obstruction to a na\"ive fixed point scheme.  We will see that $\tau$ is the key to ameliorating the issue.

To see the connection, observe that the family 
\[ 
\mathcal{C} := \left\{ U(\tau) - U(\tau)_{\bc} : \tau \in (-\tfrac{1}{3},\tfrac{1}{3}) \right\} 
\]
can be viewed as a $C^{k_0+2-k}$ curve in the ambient space $X_\delta^k$.    At a fixed $\tau$, the tangent vector to $\mathcal{C}$ is 
\[
T_\tau \mathcal{C}= \p_\tau \left( U(\tau) - U(\tau)_\bc\right) = - \delta^{-1} \left( \p_{x_2} U (\tau) - \left( \p_{x_2} U (\tau) \right)_\bc \right) = - \delta^{-1} U_2(\tau) \sim - \delta^{-1} U_0(\tau), 
\]
where the second equality follows from the linearity of the boundary correction operator. The above calculation shows that the tangent direction along the curve $\mathcal{C}$ is almost parallel to the near-degenerate subspace.

Therefore, our strategy is to seek a (rescaled) stationary stream function of the form 
\be \label{E:ansatz}
\varphi = U(\tau) - U(\tau)_\bc + v, 
\ee
with the unknowns 
\be \label{E:bundle}
(\tau, v) \in \vecbundlek := \left\{ (\tau, v) : \tau \in (-\tfrac{1}{3},\tfrac{1}{3}), \; v \in X_\delta^k \cap U_0(\tau)^\perp \right\}, \quad k\ge 2.
\ee
This ensures that $v$ avoids the near-degenerate direction of $L_\tau$. While the linear part $g - \alpha^2 \Diff^2$ of the Bernoulli boundary condition \eqref{perturbed Gamma eq} is already invertible.  We may then perform a Lyapunov--Schmidt reduction:  for each fixed $\tau$, we solve for $v$ and $\Gammas$, leaving a one-dimensional problem of the form $b(\tau) = 0$,  for a certain bifurcation function $b$.   Finally, we will appeal to an intermediate value theorem argument to infer the existence of solutions to this reduced problem, as anticipated by the model calculation carried out in Section~\ref{sec:formulation}. 

\begin{figure}
\centering
\includegraphics[scale=1.00]{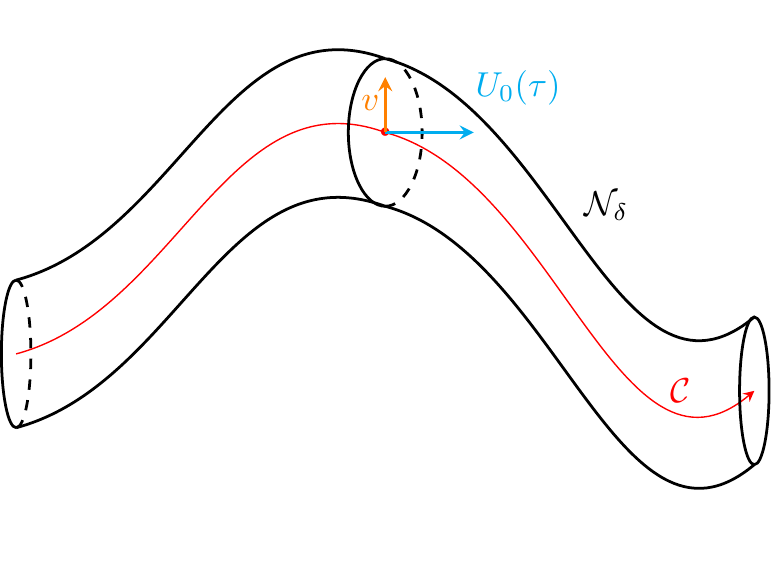}
\caption{Schematic of the tubular neighborhood $\mathcal{N}_\delta$ of the curve $\mathcal{C}_\delta$.  At each $\tau \in (-\tfrac{1}{3}, \tfrac{1}{3})$, we locally decompose the space $X_\delta^k$ into a component $v$ in the non-degenerate direction $U_0(\tau)^\perp$, and a component in the near-degenerate direction $U_0(\tau)$.}
\label{fig:tubular}
\end{figure}

It is therefore imperative that the Lypanuov--Schmidt reduction be performed in such a way that $b(\tau)$ is \emph{continuous} (or even smooth).  Because the near-degenerate and non-degenerate subspaces vary as we change $\tau$, it is natural to view $\vecbundlek$ as a smooth vector bundle over the base $(-\tfrac{1}{3},\tfrac{1}{3})$, with the fibers being the non-degenerate subspaces 
\[ 
\vecbundlek^\tau := X_\delta^k \cap U_0(\tau)^\perp,
\]
 see also Figure~\ref{fig:tubular}.
According to Proposition \ref{prop:U_0}, the $C^{k_0}$-regularity of $\gamma$ ensures that $\tau \mapsto U_0(\tau) \in X_\delta^{k+2}$ is $C^{k_0-k-1}$ for $0 \leq k \leq k_0-1$, and hence the orthogonal projection $P_0(\tau)$ onto $\spn{U_0(\tau)}$ enjoys the same regularity with respect to $\tau$. It then follows that each $\tau_0 \in (-\tfrac{1}{3},\tfrac{1}{3})$ is contained in a neighborhood $\mathcal{I}_0$ such that the mapping 
\[ (\tau, v) \in  \mathcal{I}_0 \times \vecbundlek^{\tau_0} \mapsto \left(\tau, (I-P_0(\tau)) v \right)\in  \vecbundlek, \qquad \textrm{for } 2 \leq k \leq k_0 + 1\]
is a $C^{k_0-k+1}$ local trivialization of $\vecbundlek$.   Note that here and in the sequel, we reserve cursive script for bundles. In the Lyapunov--Schmidt reduction, we fix $\tau$, while tracking the continuous dependence on it. 

\begin{remark}
While continuity in $\tau$ is sufficient for our purpose, in differential geometry, there are standard notions of smoothness of mappings related to vector bundles based on the smoothness of the  trivializations, which allow  implicit function theorem type arguments to be carried out as on flat spaces or manifolds. 
Moreover, it is standard to prove that 
\[ 
\chi \colon (\tau, v) \in \vecbundlek \mapsto  v + U(\tau) - U(\tau)_\bc \in X_\delta^k, \qquad \textrm{for } 2 \leq k \leq k_0, 
\]
defines a $C^{k_0-k+1}$ local coordinate map (usually referred to as the transversal bundle coordinates) near $\mathcal{C}$. 
\end{remark}

To simplify notation, we introduce the set   
\[ \vecbundlevGk := 
\vecbundlek \times H_{\mathrm{e}}^k(\mathbb{R}) 
\]
and endow it with the structure of a vector bundle over $(-\tfrac{1}{3},\tfrac{1}{3})$ having fibers 
\[ \vecbundlevGk^\tau := \vecbundlek^\tau \times H_{\mathrm{e}}^k(\mathbb{R}),\]
and locally trivialized in the obvious way.  

\subsection{Lyapunov--Schmidt reduction} \label{reduction section}

Let us now reconsider the elliptic system \eqref{perturbed problem},
\[ 
\begin{cases} 
(-\Delta + \gamma^\prime(U)) v + F(\tau, v, \Gammas) &= 0 \qquad \textrm{in } \slab_\delta, \\
 (g - \alpha^2 \Diff^2) \Gammas + G(\tau, v, \Gammas) &= 0 \qquad \textrm{on } \mathbb{R},
 \end{cases}
 \]
from this geometrical standpoint.  In the previous subsection, we argued that this system is equivalent to finding $\Gammas$ together with a scaled stream function having the ansatz 
\[ \varphi = v+U(\tau) - U(\tau)_\bc, \qquad  (\tau,v) \in \vecbundlek.\]
As before, we suppress the dependence of $U$ and $U_\bc$ on $\tau$ whenever there is no risk of confusion.  With a slight abuse of notation we also as above view 
 \begin{align} F(\tau, v, \Gammas) & = |1+\Gamma^\prime(\delta \placeholder )|^2 \gamma(v+U- U_\bc) - \gamma(U) - \gamma^\prime(U)v + U_\bc   \label{new def F}, \\
G(\tau, v, \Gammas)  &=
\frac{1}{2 \delta^{2}} A(\Gammas)^{-1} \left[ \frac{\left(\p_{x_2} (v + U - U_\bc)(\placeholder,\frac{1}{\delta}) \right)^2}{(1 +|\Diff| \coth{(2|\Diff|)} \Gammas)^2 + {\Gammas^\prime}^2}    \right], \label{new def G} \end{align}
from \eqref{def F} and \eqref{def G} to be the bundle map from a subset (with $\Gammas$ small) of $\vecbundlevGk$ to $\mathscr{W}_{\delta, k-2}$.
It is easily seen that the slightly reinterpreted $(F,G)$ enjoy the same regularity as in Lemma~\ref{equivalence lemma}.

Projecting the semilinear elliptic problem into the near-degenerate and non-degenerate subspaces (which are invariant under $L_\tau$), we can reconfigure the governing equations as the following system:
\bse \label{contraction map}
\begin{align}
P_0 F(\tau, v, \Gammas) &= 0  \quad \text{ in } \slab_\delta, \label{contraction map U0 span} \\
(-\Delta +\gamma^\prime(U)) v +  (I-P_0)  F(\tau, v, \Gammas)  &= 0 \quad \text{ in } \slab_\delta, \label{contraction map U0 perp} \\
(g   - \alpha^2  \Diff^2) \Gammas + G(\tau, v, \Gammas) &= 0 \quad \text{ on } \mathbb{R}. \label{contraction map Bernoulli} 
\end{align} \ese 

Notice that for a fixed $\tau$, \eqref{contraction map U0 perp}--\eqref{contraction map Bernoulli} are solved on the fiber $\vecbundlevGk^\tau$.    In the next lemma, we prove that one can always do this, and the solution depends smoothly on $\tau$.  We therefore reduce the system to the one-dimensional equation \eqref{contraction map U0 span} related to the near-degenerate subspace.  

\begin{lemma}[Lyapunov--Schmidt reduction] \label{reduction lemma} There exists $C, \delta_0 > 0$ such that, for all $\delta \in (0,\delta_0)$ and $\tau\in (-\frac 13, \frac 13)$, there exists a solution $(\tilde v(\tau),  \Gammast(\tau)) \in \mathscr{W}_{\delta, k_0}^\tau$ to \eqref{contraction map U0 perp}--\eqref{contraction map Bernoulli} which is unique in the set 
\[
\left\{ (v, \Gammas) \in \mathscr{W}_{\delta, k_0}^\tau \ : \ |v|_{H^{k_0} (\slab_\delta)} + C \delta^{-1}  |\Gammas|_{H^{k_0} (\R)} \le \delta^{k_0+1} \right\},
\]
and satisfies 
\[
|\tilde v|_{H^{k_0} (\slab_\delta)} + C \delta^{-1} | \Gammast|_{H^{k_0} (\R)} \lesssim C  \delta^{ -k_0} e^{-\frac {2(1-|\tau|)}\delta}, \quad |\Gammast - \eta_0|_{H^{k_0} (\R)} \lesssim C \delta^{\frac 34 -2k_0} e^{-\frac {3(1-|\tau|)}\delta}, 
\]
where 
\[
\eta_0 = - 2\delta^{-2} (g-\alpha^2 \Diff^2)^{-1} \left( \left( \p_{x_2} U(\tfrac \placeholder \delta, \tfrac 1\delta)\right)^2 \right).
\]
Moreover, $(\tilde v, \Gammast) \in H^{k_0} (\slab_\delta) \times H^{k_0} (\R)$ depends continuously on $\tau$.  
\end{lemma}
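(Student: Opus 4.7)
The strategy is a two-stage Lyapunov--Schmidt type reduction. Corollary \ref{C:invertibility} provides uniform invertibility of $L_\tau$ on $U_0(\tau)^\perp \cap X_\delta^{k_0}$, and $(g - \alpha^2 \Diff^2)$ is a classical isomorphism $H^{k_0}_{\mathrm e}(\R) \to H^{k_0-2}_{\mathrm e}(\R)$, so the natural idea is to recast \eqref{contraction map U0 perp}--\eqref{contraction map Bernoulli} as the coupled fixed-point system
\[
v = -L_\tau^{-1}(I-P_0) F(\tau, v, \Gammas), \qquad \Gammas = -(g-\alpha^2\Diff^2)^{-1} G(\tau, v, \Gammas).
\]
The main obstacle, flagged after Lemma~\ref{equivalence lemma}, is that $|D_\Gammas F|$ is only $O(\delta^{-1})$, so a simultaneous contraction in natural norms does not close. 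The plan is to solve the two equations \emph{sequentially}: first solve the Bernoulli equation for $\Gammas$ as a function of $v$, then substitute into the reduced elliptic equation. In the reduction, $D_\Gammas F$ never acts on a $\Gammas$-difference in isolation; it appears only composed with the small quantity $D_v \Gammas^*$, and the product is $o(1)$.

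\textbf{Step 1 (Bernoulli).} Fix $\tau \in (-\tfrac13,\tfrac13)$ and a candidate $v$ with $|v|_{H^{k_0}(\slab_\delta)} \leq \delta^{k_0+1}$. Consider $\mathcal T_\Gamma(\Gammas;v) := -(g-\alpha^2\Diff^2)^{-1} G(\tau,v,\Gammas)$ on $H^{k_0}_{\mathrm e}(\R)$. By Lemma~\ref{equivalence lemma} with $\sigma_u = \delta^{k_0+1}$, the bound $|D_\Gammas G|_{\mathcal L(H^{k_0}, H^{k_0-2})} \lesssim \delta^{1/2-k_0}(\sigma_u^2 + \delta^{1/2} e^{-2(1-\tau)/\delta})$ is $o(1)$ for small $\delta$, while $|G(\tau,0,0)|_{H^{k_0-2}} \lesssim \delta^{1-k_0} e^{-2(1-|\tau|)/\delta}$. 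Banach's fixed-point theorem yields a unique small $\Gammas^*(v)$ with $|\Gammas^*(v)|_{H^{k_0}} \lesssim \delta^{1-k_0} e^{-2(1-|\tau|)/\delta}$. Differentiating the identity $\Gammas^*(v) = \mathcal T_\Gamma(\Gammas^*(v); v)$ and inverting $I + (g-\alpha^2\Diff^2)^{-1} D_\Gammas G$ produces the key bound $|D_v \Gammas^*|_{\mathcal L(H^{k_0}, H^{k_0})} \lesssim \delta^{3/4-k_0} e^{-(1-\tau)/\delta} + \delta^{1/2-k_0} \sigma_u$, which is exponentially small in $1/\delta$.

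\textbf{Step 2 (elliptic).} Substitute $\Gammas = \Gammas^*(v)$ into \eqref{contraction map U0 perp} and set $\mathcal T_v(v) := -L_\tau^{-1}(I-P_0) F(\tau, v, \Gammas^*(v))$ on the ball $B := \{v \in U_0(\tau)^\perp \cap X_\delta^{k_0} : |v|_{H^{k_0}} \leq \delta^{k_0+1}\}$. By the chain rule and Lemma~\ref{equivalence lemma}, the two pieces of $D\mathcal T_v$ are both $o(1)$: the diagonal piece is controlled by $|D_v F| \lesssim \sigma_u + \delta^{-1}\sigma_\Gamma + \delta^{1/4} e^{-(1-|\tau|)/\delta}$, and the dangerous factor $|D_\Gammas F| \lesssim \delta^{-1}$ is absorbed by $|D_v \Gammas^*|$, yielding a contribution $\lesssim \delta^{-1/4 - k_0} e^{-(1-\tau)/\delta}$. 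In addition, $|\mathcal T_v(0)|_{H^{k_0}} \lesssim |F(\tau,0,0)|_{H^{k_0-2}} + \delta^{-1} |\Gammas^*(0)|_{H^{k_0}} \lesssim \delta^{-k_0} e^{-2(1-|\tau|)/\delta} \ll \delta^{k_0+1}$, so $\mathcal T_v$ maps $B$ into itself. Banach then delivers a unique $\tilde v \in B$ satisfying the claimed estimate, and $\Gammast := \Gammas^*(\tilde v)$ inherits the size bound from step 1. Uniqueness in the full set stated in the lemma follows from the exponentially small Lipschitz constants.

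Continuity of $(\tilde v(\tau), \Gammast(\tau))$ in $\tau$ follows from the standard parameterized contraction mapping principle, since $U_0(\tau)$, $P_0(\tau)$, $L_\tau$, $F$ and $G$ all depend continuously on $\tau$. For the refined bound on $\Gammast - \eta_0$, subtract the defining identities: $\Gammast - \eta_0 = -(g-\alpha^2\Diff^2)^{-1}[G(\tau, \tilde v, \Gammast) - 2\delta^{-2} (\p_{x_2} U(\tfrac{\placeholder}{\delta},\tfrac 1\delta))^2]$. The bracket splits into three small errors: the perturbation $A(\Gammast)^{-1} - I = O(|\Gammast|)$; the deviation of $\p_{x_2}(\tilde v + U - U_\bc)(\placeholder, \tfrac 1\delta)$ from $2\p_{x_2} U(\placeholder, \tfrac 1\delta)$, controlled via Corollary~\ref{C:Ubc}, the trace theorem, and the bound on $\tilde v$; and the denominator contributions quadratic in $\Gammas'$. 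Each, after squaring and the $\delta^{-2}$ boundary prefactor, contributes $O(\delta^{3/4 - 2k_0} e^{-3(1-|\tau|)/\delta})$ to the $H^{k_0}$ norm, yielding the claim. The most delicate point is this final bookkeeping, since the $\delta^{-2}$ prefactor, the trace loss, and the exponential rates of Corollary~\ref{C:Ubc} must all line up; everything else reduces to a mechanical application of Lemma~\ref{equivalence lemma} and Corollary~\ref{C:invertibility}.
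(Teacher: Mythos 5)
Your strategy is genuinely different from the paper's. You decouple the system: solve the Bernoulli equation for $\Gammas^*(v)$ as a function of $v$, compute $D_v\Gammas^*$, then run a contraction in $v$ alone. The paper instead rescales $\Gammas$ to $\Gammasc = \tfrac{C}{\delta}\Gammas$ and runs a single simultaneous contraction for $(v, \Gammasc)$, so that $D_{\Gammasc}\check F = \tfrac{\delta}{C} D_\Gammas F = O(C^{-1})$. Both approaches work and both hinge on the same structural point --- that the singular $\delta^{-1}$ factor from $D_\Gammas F$ cannot be allowed to act on an $O(1)$ quantity. Your sequential version makes the mechanism more explicit (the dangerous factor only ever multiplies $D_v\Gammas^*$), while the paper's rescaling trick is more compact and leads cleanly to the normalization $|v| + C\delta^{-1}|\Gammas| \le \delta^{k_0+1}$ appearing in the statement.

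Two of your intermediate estimates are overstated, however, and should be corrected even though the argument ultimately closes. First, the bound $|\Gammas^*(v)|_{H^{k_0}} \lesssim \delta^{1-k_0} e^{-2(1-|\tau|)/\delta}$ does not hold for a generic $v$ with $|v|_{H^{k_0}} \le \delta^{k_0+1}$: tracking $G(\tau,v,0) - G(\tau,0,0)$ through the $D_v G$ bound of Lemma~\ref{equivalence lemma} gives a contribution of order $\delta^{1/2-k_0}|v|^2 \sim \delta^{k_0+5/2}$, which is only algebraically small and dominates the exponential term. The exponential bound on $\Gammas^*(v)$ holds only once $v$ itself is exponentially small (in particular at the eventual fixed point $\tilde v$), which is exactly what you need to conclude $|\Gammast|\lesssim \delta^{1-k_0}e^{-2(1-|\tau|)/\delta}$ at the end. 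Second, $|D_v \Gammas^*| \lesssim \delta^{3/4-k_0}e^{-(1-\tau)/\delta} + \delta^{1/2-k_0}\sigma_u$ is \emph{not} ``exponentially small in $1/\delta$'': with $\sigma_u \le \delta^{k_0+1}$ the second term is $\delta^{3/2}$, so the composition $|D_\Gammas F|\cdot|D_v\Gammas^*| = O(\delta^{1/2})$ rather than exponentially small. Fortunately $O(\delta^{1/2})$ is still $o(1)$ and the self-map/contraction checks go through with the weaker algebraic bound, so the logic survives --- but the claim as written is false and a careful reader would flag it. With those two corrections your argument is a valid alternative to the rescaling device used in the paper.
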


\begin{remark} As a consequence, the system \eqref{contraction map} is locally equivalent to the one-dimensional problem 
\be \label{reduced problem} 
0 = b(\tau) := \left( U_0(\tau), \,  F(\tau, \tilde v(\tau), \, \Gammast(\tau)) \right)_{L^2(\slab_\delta)} =  \left( U_0, \,  L_\tau \tilde v + F(\tau, \tilde v, \, \Gammast) \right)_{L^2(\slab_\delta)}. \ee
Also, it is worth noting that, since $\gamma \in C^k$ for any $2\le k \le k_0$, the above lemma holds for all such $k$. The uniqueness property of $(\tilde v(\tau), \Gammast(\tau) )$ implies that it is independent of $k$. 
\end{remark}

\begin{proof}
Let $\delta \in (0,\delta_0)$ be given, where $\delta_0$ will determined over the course of the proof, which is largely based on the estimates given in Lemma \ref{equivalence lemma}. To tame the singular bound $\delta^{-1}$ of $D_{\Gammas} F$, 
we introduce the rescaled variable
\[  
\Gammasc := \frac{C}{\delta} \Gammas, 
\]
where $C>0$ will be determined independent of $\tau$ and $\delta$, and the corresponding scaling of the nonlinearities
\[ 
\check F(\tau, v, \Gammasc) := F(\tau, v, \frac \delta C \Gammasc), \qquad \check G(\tau, v, \Gammasc) := \frac{C}{\delta} G(\tau, v, \frac \delta C \Gammasc). 
\]

Denote by 
\begin{align*}
&L_1^{-1}(\tau) =\left( \left(-\Delta+\gamma^\prime(U(\tau))\right)|_{\mathscr{X}_{\delta, k_0}^\tau }\right)^{-1} \colon \mathscr{X}_{\delta, k_0-2}^\tau \to \mathscr{X}_{\delta, k_0}^\tau, \\
&L_2^{-1} = (g-\alpha^2 \Diff^2)^{-1}\colon H^{k_0-2} (\mathbb{R}) \to H^{k_0}(\mathbb{R}),
\end{align*}
where we recall that the existence and boundedness of $L_1^{-1}(\tau)$ were established in Corollary 
\ref{C:invertibility}. 
In particular, notice that, because $-\Delta + \gamma^\prime(U)\colon  \mathscr{X}_{\delta, k_0}^\tau \to \mathscr{X}_{\delta, k_0-2}^\tau$ is self-adjoint with respect to the $L^2 (\slab_\delta)$ inner product and $U_0$ is an eigenfunction, the range of $L_1^{-1}(\tau)$ is contained in $U_0(\tau)^\perp$.   
Then we see that $(\tau, v, \Gammas)$ solve \eqref{contraction map U0 perp} and \eqref{contraction map Bernoulli} if and only if $(\tau, v, \Gammasc)$ is a fixed point of the mapping 
\[ 
\Lambda^\tau = \left( \Lambda_1^\tau(v, \Gammasc), \Lambda_2^\tau(v, \Gammasc)\right) \colon B \to  \mathscr{W}_{\delta, k_0}^\tau  
\]
given by
\be \begin{split} \Lambda_1^\tau(v, \Gammasc) &  =  - L_1^{-1}(\tau) (I-P_0(\tau)) \check F(\tau, v,  \Gammasc), \\ 
\Lambda_2^\tau(v, \Gammasc) & =  - L_2^{-1} \check G( \tau, v,  \Gammasc) \end{split} \label{def reduction Lambda} \ee
on  the set 
\[
B = \{ (v,  \Gammasc) \in \mathscr{W}_{\delta, k_0}^\tau \ : \ |v|_{H^{k_0} (\slab_\delta)} + | \Gammasc|_{H^{k_0} (\R)} \le \delta^{k_0+1}\}. 
\]

From Lemma \ref{equivalence lemma}, we have 
\[
|\Lambda^\tau (0, 0)|_{H^{k_0}(\slab_\delta) \times H^{k_0} (\R)} \lesssim C \delta^{ -k_0} e^{-\frac {2(1-|\tau|)}\delta}, \quad |D \Lambda^\tau|_{C^0(B, \mathcal{L}( \mathscr{W}_{\delta, k_0}^\tau))} \lesssim C^{-1}.
\]
Therefore, for a sufficiently large $C>0$, which can be chosen independently of $\delta$ and $\tau$,  $\Lambda^\tau$ is a contraction on $B$, and so it possesses a unique fixed point 
\[ (\tilde v(\tau), \Gammasc(\tau)) = (\tilde v(\tau), C \delta^{-1}  \Gammast(\tau)) \in B.\]
Moreover, we have the estimate
\[
|\tilde v(\tau)|_{H^{k_0} (\slab_\delta)} + |\Gammasc(\tau) |_{H^{k_0} (\R)} \lesssim |\Lambda^\tau (0, 0)|_{H^{k_0}(\slab_\delta) \times H^{k_0} (\R)} \lesssim C \delta^{ -k_0} e^{-\frac {2(1-|\tau|)}\delta}.
\]
The continuity of $\tilde v(\tau)$ and $\Gammast(\tau)$ follows from the continuity of $\Lambda^\tau$ in $\tau$, where we can view it as a mapping defined on a smooth bundle. 

Finally, we identify the leading order term of $\Gammast (\tau)$. Due to the fixed point property, we have 
\[
(g-\alpha^2 \Diff^2)  \Gammast (\tau)  = - G(\tau, \tilde v, \Gammast).
\]
Lemma \ref{equivalence lemma} and the above upper bounds of $(\tilde v, \Gammasc)$  imply
\[
|G(\tau, \tilde v, \Gammast) - G(\tau, 0, 0)|_{H^{k_0-2} (\R)} \lesssim C \delta^{\frac 34 -2k_0} e^{-\frac {3(1-|\tau|)}\delta}. 
\]
From \eqref{E:G0}, \eqref{E:temp-a}, \eqref{E:temp-0}, and the scaling property, we have 
\[
\left|G(\tau, 0, 0) - 2\delta^{-2} \left( \p_{x_2} U(\tfrac \placeholder \delta, \tfrac 1\delta)\right)^2 \right|_{H^{k_0-2} (\R)} \lesssim  \delta^{-\frac 12 -k_0} e^{-\frac {3(1-|\tau|)}\delta}
\]
which along with the above inequality yields the desire estimate on $\Gammast(\tau)$. 
\end{proof}

\subsection{Proof of the main result} \label{main proof section}

\begin{proof}[Proof of Theorem~\ref{main euler theorem}]

The Lyapunov--Schmidt reduction carried out in Lemma~\ref{reduction lemma} shows that it suffices to find $\tau \in (-\tfrac{1}{3},\tfrac{1}{3})$ with $b(\tau) = 0$, where $b(\tau)$ is defined in \eqref{reduced problem}.  Our strategy will be to relate the bifurcation equation to the model calculation \eqref{motivational identity}. 

With that in mind, fix $\tau \in (-\frac 13,\tfrac{1}{3})$ and recall 
\[ 
(\tilde v,\Gammast) = (\tilde v(\tau), \Gammast(\tau)), \quad  U = U(\tau), \quad U_0(\tau) = a_0(U_2 + w),  \quad\textrm{and} \quad \varphi = U - U_\bc + \tilde v, 
\]
recalling that $U_0$, $U_2$, $a_0$, and $w$ were obtained in Section~\ref{sec:Spec}.  In particular, $1\eqsim a_0 = a_0(\tau)>0$ is a normalizing constant introduced to ensure that $| U_0 |_{L^2} = 1$. 

Since $(\tilde v, \Gammast)$ solves \eqref{contraction map U0 perp}, 
we have 
\be \label{E:b}
L_\tau \tilde v + F(\tau, \tilde v, \Gammast)  = b(\tau) U_0(\tau).
\ee
  Now, let 
\[ \psi (\tau) = \varphi (\tau) \circ \left(\id+ \delta^{-1} \tilde \Gamma(\delta \placeholder)\right)^{-1},
\]
where $\tilde \Gamma = \tilde \Gamma_1 + \I \tilde \Gamma_2$ is the holomorphic function constructed from  $\Gammast$ through \eqref{def Gammas}. According to Lemma~\ref{reduction lemma}, the domain of $\psi$ is the (slightly) perturbed strip 
\[
\tilde \Omega(\tau) = \left(\id+ \delta^{-1} \tilde \Gamma(\delta \placeholder)\right) (\slab_\delta) \sim \slab_\delta.
\]
For clarity, we use $y = (y_1, y_2)$ as the coordinate variable in $\tilde \Omega(\tau)$.  It is easy to compute 
\[
\p_{y_2} \psi = \left(\frac \I {1+ \tilde \Gamma'(\delta \placeholder)} \cdot \nabla  \varphi\right) \circ \left(\id+ \delta^{-1} \tilde \Gamma(\delta \placeholder)\right)^{-1},
\]
where the complex number $\I (1+ \tilde \Gamma'(\delta \placeholder))^{-1}$ is understood as a two-dimensional vector. Corollary \ref{C:Ubc}, Proposition \ref{prop:U_0}, and Lemma \ref{reduction lemma} together imply that  
\[
\left|U_0(\tau) - a_0(\tau) 
\frac \I {1+ \tilde \Gamma'(\delta \placeholder)} \cdot \nabla  \varphi \right|_{L^2(\slab_\delta)} 
\ll1 = |U_0|_{L^2(\slab_\delta)}.
\]
In view of \eqref{E:b}, we have that \eqref{reduced problem} holds for $(\tau, \tilde v, \Gammast)$ if and only if 
\[
\tilde b(\tau) := \left(  
\frac \I{1+ \tilde \Gamma'(\delta \placeholder)} \cdot \nabla  \varphi, L_\tau \tilde v + F(\tau, \tilde v, \Gammast) \right)_{L^2(\slab_\delta)} = 0.
\]

By the definitions of $F$ and the boundary correction operator, 
\[
L_\tau \tilde v + F(\tau, \tilde v, \Gammast) = -\Delta  \varphi (\tau) + |1+\tilde \Gamma' (\delta \placeholder) |^2 \gamma( \varphi),
\]
which, along with the coordinate change $y=x + \delta^{-1} \tilde \Gamma(\delta x)$, gives 
\[
\tilde b(\tau) = \int_{\tilde \Omega(\tau)} \left(-\Delta \psi + \gamma(\psi)\right) \p_{y_2} \psi \, \diff y. 
\]
Following the same calculation leading to \eqref{motivational identity}, we then find that 
\[
\tilde b(\tau) = - \frac 12 \int_{\p \tilde \Omega (\tau)} |\nabla \psi|^2 N_2 \, \diff S_y 
\]
where 
\[
N = (N_1, N_2) =\pm \left(\frac {\I+\I\tilde \Gamma'(\delta \placeholder)}{|1+\tilde \Gamma'(\delta \placeholder)|}\right) \circ \left(\id+ \delta^{-1} \tilde \Gamma(\delta \placeholder)\right)^{-1} 
\]
is the outward unit normal vector on the upper/lower component of $\p \tilde \Omega (\tau)$, and 
\[
\diff S_y = |1+\tilde \Gamma'(\delta \placeholder)| \circ \left(\id+ \delta^{-1} \tilde \Gamma(\delta \placeholder)\right)^{-1} \, \diff x_1
\]
the length element along $\p \tilde \Omega(\tau)$. We can rewrite $\tilde b(\tau)$ as an integral on $\slab_\delta$ by reversing the coordinate change:
\be \label{E:tb-1} \begin{split}
\tilde b(\tau) &= - \frac 12 \int_\R \frac {1+ \p_{x_1} \tilde \Gamma_1 (\delta x_1, 1)}{|1+\tilde \Gamma'(\delta x_1, 1)|^2} \left| \p_{x_2} \varphi (x_1, \tfrac 1\delta)\right|^2 \, \diff x_1 \\
& \quad +  \frac 12 \int_\R \frac {1+ \p_{x_1} \tilde \Gamma_1 (\delta x_1, -1)}{|1+\tilde \Gamma'(\delta x_1, -1)|^2} \left|\p_{x_2} \varphi (x_1, -\tfrac 1\delta) \right|^2 \, \diff x_1.
\end{split} \ee
Notice that tangential derivatives do not appear because $\varphi|_{\p \slab_\delta}=0$.  Without loss of generality, we just consider the first term. From the definition of $\varphi$, \eqref{E:temp-a}, \eqref{E:temp-0},
 Lemma \ref{reduction lemma} (taking $k_0 = 2$), and the trace theorem, we obtain 
\begin{align*}
\left| \p_{x_2} \varphi (\placeholder, \tfrac 1\delta) - 2 \p_{x_2} U(\placeholder, \tfrac {1-\tau}\delta) \right|_{L^2 (\R)} \lesssim |\tilde v|_{H^2 (\slab_\delta)} + \delta^{\frac 34} e^{-\frac {2(1-|\tau|)}\delta} \lesssim  \delta^{-2} e^{-\frac {2(1-|\tau|)}\delta},
\end{align*}
and 
\[
\left| \p_{x_2} \varphi (\placeholder, \tfrac 1\delta)\right|_{L^2 (\R)} \lesssim \delta^{\frac 14} e^{-\frac {1-|\tau|}\delta}.
\]
Therefore, again Lemma \ref{reduction lemma} implies 
\[
|\tilde b(\tau) - \tilde b_1(\tau)  | \lesssim  \delta^{-\frac 74} e^{-\frac {3(1-|\tau|)}\delta}, 
\]
where 
\[
\tilde b_1(\tau) := - 2 \int_\R \left(\p_{x_2} U  (x_1, \tfrac {1-\tau}\delta)\right)^2 - \left(\p_{x_2} U(x_1, \tfrac {1+\tau}\delta)\right)^2 \, \diff x_1.
\]
Here we have used the radial symmetry of $U$ to slightly simplify the expression.  Clearly, it also implies that $\tilde b_1$ is odd. 

Due to the exponential localization, $\tilde b_1$ can be effectively determined by integrating only over a $\delta$-dependent but compact interval.  Indeed, from Proposition \ref{prop:sign of Uy}, it is easy to see 
\be \label{E:temp-2}
\left |\tilde b(\tau) + \tilde b_2(\tau)  \right| \lesssim  \delta^{-\frac 74} e^{-\frac {3(1-|\tau|)}\delta}, 
\ee
where 
\[
\tilde b_2(\tau) := 2 \int_{-\frac 5\delta}^{\frac 5\delta} \left(\p_{x_2} U  (x_1, \tfrac {1-\tau}\delta)\right)^2 - \left(\p_{x_2} U(x_1, \tfrac {1+\tau}\delta)\right)^2 \,  \diff x_1.
\]
Since $\tilde b_2$ is also odd, we consider $\tau\in (0, \frac 13)$. Using Proposition \ref{prop:sign of Uy} once more, along with \eqref{E:p2U}, we  compute that
\begin{align*}
\tilde b_2(\tau) & = - 4 \int_{-\frac 5\delta}^{\frac 5\delta} \int_{\frac {1-\tau}\delta}^{\frac {1+\tau}\delta}  (\p_{x_2} U \p_{x_2}^2 U)  \, \diff x_2 \, \diff x_1 \\
& =  - 4 \int_{-\frac 5\delta}^{\frac 5\delta} \int_{\frac {1-\tau}\delta}^{\frac {1+\tau}\delta}  \sin{(\theta)} U_r \left(\sin^2{\theta} U_{rr} + \frac {\cos^2{\theta}}r U_r \right ) \, \diff x_2 \, \diff x_1 \\
& \eqsim  \int_{-\frac 5\delta}^{\frac 5\delta} \int_{\frac {1-\tau}\delta}^{\frac {1+\tau}\delta} r^{-1} e^{-2r} \, \diff x_2 \, \diff x_1,
\end{align*}
where we used the fact that $0 < \sin \theta \eqsim 1$ in this integral region. Let 
\[
S = \left\{ x : |x_1| < \tfrac 5\delta, \ |x_2-\tfrac 1\delta| < \tfrac \tau{\delta}, \ |x| < \tfrac {1+\tau}\delta \right\},
\]
which has the polar coordinates representation 
\[
S = \left\{ (r, \theta) : r \in (\tfrac {1-\tau}\delta, \tfrac {1+\tau}\delta), \ \theta \in \left(\beta(r), \pi -\beta(r) \right) \right\},
\] 
where, because we are restricting to $\tau \in (0, \frac 13)$, 
\[
\beta (r) = \arcsin{\left( \tfrac {1-\tau}{\delta r} \right)}, \quad \frac \pi2 - \beta(r) \eqsim (\delta r - 1+\tau)^{\frac 12}. 
\]
Therefore we have 
\begin{align*}
\tilde b_2 (\tau) \gtrsim & \int_S r^{-1} e^{-2r} \, \diff x_2 \, \diff x_1 = \int_{\frac {1-\tau}\delta}^{\frac {1+\tau}\delta} \int_{\beta(r)}^{\pi -\beta(r)} e^{-2r} \, \diff \theta \, \diff r= \int_{\frac {1-\tau}\delta}^{\frac {1+\tau}\delta} \left(\pi -2\beta(r)\right) e^{-2r} \, \diff r\\
\eqsim & \int_{\frac {1-\tau}\delta}^{\frac {1+\tau}\delta} \left( \delta r - (1-\tau) \right)^{\frac 12} e^{-2r}\, \diff r = \delta^{\frac 12} e^{ -\frac {2(1-\tau)}\delta} \int_0^{\frac {2\tau}\delta} ( r')^{\frac 12} e^{- 2r'} \, \diff r'. 
\end{align*}
This implies that, for $\frac \tau\delta \le 1$, 
\[
\tilde b_2(\tau) \gtrsim \tau^{\frac 12}  e^{-\frac 2\delta},
\]
and thus we obtain from \eqref{E:temp-2} that there exists $C>0$ independent of $\delta>0$ such that 
\[
\tilde b(\tau_0) < 0, \quad \tau_0 =C \delta^{-\frac 72} e^{-\frac 2{\delta}}. 
\]

From the oddness of $\tilde b_2$, we can then conclude that there exists $\tilde \tau$ with $|\tilde \tau| \lesssim \delta^{-\frac 72} e^{-\frac 2{\delta}}$ and such that $(\tilde \tau, \tilde v(\tilde \tau), \Gammast (\tilde \tau))$ is a solution to \eqref{contraction map}, and thus corresponds to a solution  to the stationary capillary-gravity wave problem.  The stream function is given by
\be \label{E:Psi-Esti}
\Psi =  \left(U(\placeholder -\frac {\tilde \tau}\delta e_2) - U (\placeholder -\frac {\tilde \tau}\delta e_2)_{\bc}+ \tilde v (\tilde \tau)\right) \circ \left( \tfrac 1\delta \left( \id + \tilde \Gamma (\tilde \tau) \right)^{-1}\right), 
\ee
defined on 
\[ 
\Omega = \left(\id + \tilde \Gamma (\tilde \tau) \right) (\{ |x_2| <1\}). 
\]
From the  estimate $|\tilde \tau| \lesssim\delta^{-\frac 72} e^{-\frac 2{\delta}}$,  
Corollary \ref{C:Ubc}, and Lemma \ref{reduction lemma}, we have 
\[
\left|\tilde v (\tilde \tau) \circ \left( \tfrac 1\delta \left( \id + \tilde \Gamma (\tilde \tau) \right)^{-1}\right) \right|_{H^{k_0} (\Omega)} \lesssim \delta^{1-k_0} |\tilde v(\tilde \tau)|_{H^{k_0} (\slab_\delta)} \lesssim \delta^{1 -2k_0} e^{-\frac 2\delta},
\]
and 
\begin{align*}
&\left| \left(U(\placeholder - \frac {\tilde \tau}\delta e_2) - U (\placeholder -\frac {\tilde \tau}\delta e_2)_{\bc}\right) \circ \left( \tfrac 1\delta \left(\id + \tilde \Gamma (\tilde \tau) \right)^{-1}\right) - \tilde \Psi_0 (\tfrac \placeholder\delta) \right|_{H^{k_0} (\Omega)} \\
&\qquad  \le  \left|\left(U(\placeholder - \frac {\tilde \tau}\delta e_2) - U (\placeholder - \frac {\tilde \tau}\delta e_2)_{\bc} -\tilde \Psi_0 \right) \circ \left( \tfrac 1\delta \left(\id + \tilde \Gamma (\tilde \tau) \right)^{-1}\right)  \right|_{H^{k_0} (\Omega)} \\
& \qquad\qquad +  \left|\tilde \Psi_0 \circ \left( \tfrac 1\delta \left(\id + \tilde \Gamma (\tilde \tau) \right)^{-1}\right) -\tilde \Psi_0 (\tfrac \placeholder\delta) \right|_{H^{k_0} (\Omega)} \\
& \qquad \lesssim  \delta^{\frac 74-k_0} e^{-\frac 2\delta} + |\tilde \Psi_0 (\tfrac \placeholder\delta)|_{H^{k_0+1} (\R^2)} |\Gammast(\tilde \tau)|_{H^{k_0 -\frac 12} (\R)} \lesssim \delta^{1 -2k_0} e^{-\frac 2\delta},
\end{align*}
where 
\[
\tilde \Psi_0(x) = U(x -\frac {\tilde \tau}\delta e_2) - U (x_1, \frac {2-\tilde \tau}\delta -x_2) - U (x_1, - \frac {2+\tilde \tau}\delta -x_2). 
\]
The desired estimate on $\Psi$ in Theorem \ref{main euler theorem} follows immediately. 

Finally, the corresponding free surface profile $\eta$ is given by 
\[
\eta = \Gammast (\tilde \tau) \circ \left(\id + \tilde \Gamma_1(\tilde \tau, \placeholder, 1)\right)^{-1},
\]
which clearly satisfies 
\[
|\eta|_{H^{k_0} (\R)} \lesssim 
\delta^{1 - k_0} e^{-\frac 2\delta}.
\] 
Using \eqref{E:temp-0} and Lemma \ref{reduction lemma}, it is straightforward to identify the leading order term of $\eta$ coinciding with that of $\Gammast (\tilde \tau)$ and to obtain the same remainder estimate much as in the above procedure for $\Psi$. 
This completes the proof of the main theorem. 
\end{proof}

\section*{Acknowledgements}
The authors wish to thank one of the referees whose extraordinarily thorough reading and many suggestions lead to substantial  improvements.

\bibliographystyle{siam}
\bibliography{spike}

\begin{thebibliography}{10}

\bibitem{MR3626577}
{\sc S.~Adachi, M.~Shibata, and T.~Watanabe}, {\em Global uniqueness results
  for ground states for a class of quasilinear elliptic equations}, Kodai Math.
  J., 40 (2017), pp.~117--142.

\bibitem{MR734575}
{\sc H.~Berestycki, T.~Gallou\"et, and O.~Kavian}, {\em {\'Equations de champs
  scalaires euclidiens non lin\'eaires dans le plan}}, C. R. Acad. Sci. Paris
  S\'er. I Math., 297 (1983), pp.~307--310.

\bibitem{MR695535}
{\sc H.~Berestycki and P.-L. Lions}, {\em Nonlinear scalar field equations.
  {I}. {E}xistence of a ground state}, Arch. Rational Mech. Anal., 82 (1983),
  pp.~313--345.

\bibitem{chen2019existence}
{\sc R.~M. Chen, S.~Walsh, and M.~H. Wheeler}, {\em Existence, nonexistence,
  and asymptotics of deep water solitary waves with localized vorticity}, Arch.
  Rational Mech. Anal., 234 (2019), pp.~595--633.

\bibitem{MR2224508}
{\sc C.~Chicone}, {\em Ordinary differential equations with applications},
  vol.~34 of Texts in Applied Mathematics, Springer, New York, second~ed.,
  2006.

\bibitem{constantin2001edge}
{\sc A.~Constantin}, {\em Edge waves along a sloping beach}, J. Phys. A, 34
  (2001), p.~9723.

\bibitem{constantin2010particle}
\leavevmode\vrule height 2pt depth -1.6pt width 23pt, {\em On the particle
  paths in solitary water waves}, Q. Appl. Math., 68 (2010), pp.~81--90.

\bibitem{constantin2011dynamical}
\leavevmode\vrule height 2pt depth -1.6pt width 23pt, {\em A dynamical systems
  approach towards isolated vorticity regions for {T}sunami background states},
  Arch. Ration. Mech. Anal., 200 (2011), pp.~239--253.

\bibitem{constantin2011book}
\leavevmode\vrule height 2pt depth -1.6pt width 23pt, {\em Nonlinear water
  waves with applications to wave-current interactions and tsunamis}, vol.~81
  of CBMS-NSF Regional Conference Series in Applied Mathematics, Society for
  Industrial and Applied Mathematics (SIAM), Philadelphia, PA, 2011.

\bibitem{constantin2007particle}
{\sc A.~Constantin and J.~Escher}, {\em Particle trajectories in solitary water
  waves}, Bull. Am. Math. Soc, 44 (2007), pp.~423--431.

\bibitem{constantin2004exact}
{\sc A.~Constantin and W.~Strauss}, {\em Exact steady periodic water waves with
  vorticity}, Comm. Pure Appl. Math., 57 (2004), pp.~481--527.

\bibitem{constantin2009pressure}
{\sc A.~Constantin and W.~Strauss}, {\em Pressure beneath a {S}tokes wave},
  Comm. Pure Appl. Math., 63 (2010), pp.~533--557.

\bibitem{constantin2016global}
{\sc A.~Constantin, W.~Strauss, and E.~V\u{a}rv\u{a}ruc\u{a}}, {\em Global
  bifurcation of steady gravity water waves with critical layers}, Acta Math.,
  217 (2016), pp.~195--262.

\bibitem{craig2002nonexistence}
{\sc W.~Craig}, {\em Non-existence of solitary water waves in three
  dimensions}, R. Soc. Lond. Philos. Trans. Ser. A Math. Phys. Eng. Sci., 360
  (2002), pp.~2127--2135.
\newblock Recent developments in the mathematical theory of water waves
  (Oberwolfach, 2001).

\bibitem{crapper1957exact}
{\sc G.~Crapper}, {\em An exact solution for progressive capillary waves of
  arbitrary amplitude}, J. Fluid Mech., 2 (1957), pp.~532--540.

\bibitem{dasilva1988steep}
{\sc A.~T. Da~Silva and D.~Peregrine}, {\em Steep, steady surface waves on
  water of finite depth with constant vorticity}, J. Fluid Mech., 195 (1988),
  pp.~281--302.

\bibitem{ehrnstrom2011steady}
{\sc M.~Ehrnstr\"{o}m, J.~Escher, and E.~Wahl\'{e}n}, {\em Steady water waves
  with multiple critical layers}, SIAM J. Math. Anal., 43 (2011),
  pp.~1436--1456.

\bibitem{MR2409513}
{\sc M.~Ehrnstr{\"o}m and G.~Villari}, {\em Linear water waves with vorticity:
  rotational features and particle paths}, J. Differential Equations, 244
  (2008), pp.~1888--1909.

\bibitem{EW14ARMA}
{\sc M.~Ehrnstr{\"o}m and E.~Wahl{\'e}n}, {\em Trimodal steady water waves},
  Arch. Rational Mech. Anal.,  (2014), pp.~1--23.

\bibitem{filippov1960vortex}
{\sc I.~G. Filippov}, {\em Solution of the problem of the motion of a vortex
  under the surface of a fluid, for {F}roude numbers near unity}, J. Appl.
  Math. Mech., 24 (1960), pp.~698--716.

\bibitem{filippov1961motion}
\leavevmode\vrule height 2pt depth -1.6pt width 23pt, {\em On the motion of a
  vortex below the surface of a liquid}, J. Appl. Math. Mech., 25 (1961),
  pp.~357--365.

\bibitem{gerstner1809theorie}
{\sc F.~Gerstner}, {\em Theorie der wellen}, Annalen der Physik, 32 (1809),
  pp.~412--445.

\bibitem{GW99}
{\sc C.~Gui and J.~Wei}, {\em Multiple interior peak solutions for some
  singularly perturbed {N}eumann problems}, J. Differential Equations, 158
  (1999), pp.~1--27.

\bibitem{henry2008gerstner}
{\sc D.~Henry}, {\em On {G}erstner's water wave}, J. Nonlinear Math. Phy., 15
  (2008), pp.~87--95.

\bibitem{hur2012no}
{\sc V.~M. Hur}, {\em No solitary waves exist on 2{D} deep water},
  Nonlinearity, 25 (2012), pp.~3301--3312.

\bibitem{kinnersley1976exact}
{\sc W.~Kinnersley}, {\em Exact large amplitude capillary waves on sheets of
  fluid}, J. Fluid Mech., 77 (1976), pp.~229--241.

\bibitem{MR3808597}
{\sc V.~Kozlov and E.~Lokharu}, {\em {$N$}-modal steady water waves with
  vorticity}, J. Math. Fluid Mech., 20 (2018), pp.~853--867.

\bibitem{MR969899}
{\sc M.~K. Kwong}, {\em Uniqueness of positive solutions of {$\Delta
  u-u+u^p=0$} in {${\bf R}^n$}}, Arch. Rational Mech. Anal., 105 (1989),
  pp.~243--266.

\bibitem{le2018existence}
{\sc H.~Le}, {\em On the existence and instability of solitary water waves with
  a finite dipole}, SIAM J. Math. Anal., 51 (2019), pp.~4074--4104.

\bibitem{levicivita1925determination}
{\sc T.~Levi-Civita}, {\em D{\'e}termination rigoureuse de ondes permanentes
  d'ampleur finie}, Ann. Math., 93 (1925), pp.~264--314.

\bibitem{MR1639159}
{\sc Y.~Li and L.~Nirenberg}, {\em The {D}irichlet problem for singularly
  perturbed elliptic equations}, Comm. Pure Appl. Math., 51 (1998),
  pp.~1445--1490.

\bibitem{MR3436246}
{\sc C.~I. Martin and B.-V. Matioc}, {\em Gravity water flows with
  discontinuous vorticity and stagnation points}, Commun. Math. Sci., 14
  (2016), pp.~415--441.

\bibitem{nekrasov1951exact}
{\sc A.~I. Nekrasov}, {\em The exact theory of steady waves on the surface of a
  heavy fluid}, Izdat. Akad. Nauk SSSR, Moscow, 1951.

\bibitem{NiTa91}
{\sc W.-M. Ni and I.~Takagi}, {\em On the shape of least-energy solutions to a
  semilinear {N}eumann problem}, Comm. Pure Appl. Math., 44 (1991),
  pp.~819--851.

\bibitem{ni1998location}
{\sc W.-M. Ni, I.~Takagi, and J.~Wei}, {\em On the location and profile of
  spike-layer solutions to a singularly perturbed semilinear {D}irichlet
  problem: intermediate solutions}, Duke Math. J., 94 (1998), pp.~597--618.

\bibitem{ni1995location}
{\sc W.-M. Ni and J.~Wei}, {\em On the location and profile of spike-layer
  solutions to singularly perturbed semilinear {D}irichlet problems}, Comm.
  Pure Appl. Math., 48 (1995), pp.~731--768.

\bibitem{shatah2013travelling}
{\sc J.~Shatah, S.~Walsh, and C.~Zeng}, {\em Travelling water waves with
  compactly supported vorticity}, Nonlinearity, 26 (2013), pp.~1529--1564.

\bibitem{simmen1985steady}
{\sc J.~A. Simmen and P.~Saffman}, {\em Steady deep-water waves on a linear
  shear current}, Studies in Applied Mathematics, 73 (1985), pp.~35--57.

\bibitem{sun1997analytical}
{\sc S.~M. Sun}, {\em Some analytical properties of capillary-gravity waves in
  two-fluid flows of infinite depth}, Proc. Roy. Soc. London Ser. A, 453
  (1997), pp.~1153--1175.

\bibitem{terkrkorov1958vortex}
{\sc A.~M. Ter-Krikorov}, {\em Exact solution of the problem of the motion of a
  vortex under the surface of a liquid}, Izv. Akad. Nauk SSSR Ser. Mat., 22
  (1958), pp.~177--200.

\bibitem{toland1996stokes}
{\sc J.~F. Toland}, {\em Stokes waves}, Topol. Methods Nonlinear Anal., 7
  (1996), pp.~1--48.

\bibitem{varholm2016solitary}
{\sc K.~Varholm}, {\em Solitary gravity-capillary water waves with point
  vortices}, Discrete Contin. Dyn. Syst., 36 (2016), pp.~3927--3959.

\bibitem{varholmthesis}
\leavevmode\vrule height 2pt depth -1.6pt width 23pt, {\em On Steady Water
  Waves with Stagnation Points}, PhD thesis, Norwegian University of Science
  and Technology, 2019.

\bibitem{wahlen2009steady}
{\sc E.~Wahl\'{e}n}, {\em Steady water waves with a critical layer}, J.
  Differential Equations, 246 (2009), pp.~2468--2483.

\bibitem{wheeler2018integral}
{\sc M.~H. Wheeler}, {\em Integral and asymptotic properties of solitary waves
  in deep water}, Comm. Pure Appl. Math., 71 (2018), pp.~1941--1956.

\end{thebibliography}

\end{document}